\documentclass[reqno]{amsart}
\usepackage{amssymb}
\usepackage{amsthm}
\usepackage{amsmath}
\usepackage{enumitem}
\usepackage{xcolor}
\usepackage{booktabs}
\usepackage[foot]{amsaddr}

\usepackage[hyphens]{url}
\usepackage{hyperref}
\usepackage[hyphenbreaks]{breakurl}

\usepackage{orcidlink}
\usepackage{subcaption}

\usepackage{colortbl}
\usepackage{xcolor}

\usepackage{booktabs}
\usepackage{adjustbox}

\hypersetup{
    colorlinks=true,
    linkcolor=babyblue,
    filecolor=babyblue,      
    urlcolor=babyblue,
    citecolor=babyblue
    }

\definecolor{babyblue}{rgb}{0.1, 0.6, 0.75}

\definecolor{orcidlogocol}{rgb}{0.1, 0.6, 0.75}

\newtheorem{theorem}{Theorem}[section]

\newtheorem{proposition}[theorem]{Proposition}
\newtheorem{lemma}[theorem]{Lemma}

\theoremstyle{definition}

\def\Sym{{\rm Sym}}

\def\ker{{\rm ker}\,}
\def\ord{{\rm ord}}

\def\Skew{{\rm Skew}}

\newcommand{\NN}{\mathbb N}
\newcommand{\ZZ}{\mathbb Z}

\newcommand{\OO}{\mathcal{O}}

\begin{document}
\title{Enumeration of skew morphisms of cyclic $2$-groups}
\author[Martin Bachrat{\' y}]{Martin Bachrat{\' y}\,$^{\orcidlink{0000-0002-4300-7507},1,\ast}$}
\address{$^1$Faculty of Civil Engineering, Slovak University of Technology, Bratislava 81005, Slovakia}
\address{$^\ast$Corresponding author}
\email[A1]{martin.bachraty@stuba.sk}
\thanks{The author acknowledges funding from the EU NextGenerationEU through the Recovery and Resilience Plan for Slovakia under the project No. 09I03-03-V04-00272.}

\maketitle

\begin{abstract}
A skew morphism of a finite group $B$ is a permutation of $B$ fixing the identity and satisfying $\varphi(xy) = \varphi(x)\varphi^{i_x}(y)$ for some integers $i_x$ indexed by $x \in B$. The enumeration of skew morphisms of finite cyclic groups remains an open problem. The most substantial progress to date concerns cyclic $p$-groups with $p$ odd, for which a full classification and enumeration was obtained by Kov\'{a}cs and Nedela. In this paper we treat the remaining case $p = 2$, giving a complete classification and enumeration of skew morphisms of finite cyclic $2$-groups. Writing $\mathrm{Skew}(n)$ for the number of skew morphisms of $\mathbb{Z}_n$, we prove that $\mathrm{Skew}(2^e) = 4\,\mathrm{Skew}(2^{e-1}) - 4$ for each $e \geq 4$, and that $\mathrm{Skew}(2^e) = (7 \cdot 4^{e-2} + 8)/6$ for each $e \geq 3$. This completes the enumeration of skew morphisms for all cyclic $p$-groups. \\[0.4em]
\textit{Keywords: skew morphism, cyclic $2$-group, enumeration, group factorisation.} \\[0.4em]
\end{abstract}

\section{Introduction}

A \emph{skew morphism} of a finite group $B$ is a permutation $\varphi$ of $B$ fixing the identity element of $B$ and having the property that for each $x\in B$ there exists a non-negative integer $i_x$ such that $\varphi(xy) = \varphi(x)\varphi^{i_x}(y)$ for all $y \in B$. The \emph{order} of $\varphi$, denoted by $\ord(\varphi)$, is defined as the order of the group $\langle \varphi\rangle$. If $\varphi$ is non-trivial, then there is a unique choice for $i_x$ such that $i_x\in \{1,\dots, \ord(\varphi)-1\}$, and the function $\pi_\varphi$ that maps each $x\in B$ to $i_x$ is called the \emph{power function} of $\varphi$. If $\varphi$ is the identity permutation of $B$, we define $\pi_\varphi(x)=1$ for all $x\in B$. Observe that every automorphism of $B$ is a skew morphism with power function identically $1$, as the defining relation then reduces to $\varphi(xy) = \varphi(x)\varphi(y)$, and skew morphisms may thus be regarded as a natural generalisation of automorphisms.

Skew morphisms were introduced more than two decades ago in \cite{JajcaySiran} in the context of Cayley maps. A \emph{Cayley map} $\mathrm{CM}(B, S, \rho)$ is an embedding of a Cayley graph $\mathrm{Cay}(B, S)$ into an orientable surface such
that the cyclic order of arcs based at each vertex is given by a single cyclic permutation $\rho$ of the connection set $S$. Such embeddings are of interest in topological graph theory, where one seeks to understand which graphs admit highly symmetric
surface embeddings. Jajcay and \v{S}ir\'{a}\v{n} showed that a Cayley map $\mathrm{CM}(B, S, \rho)$ admits a map automorphism group regular on arcs if and only if $\rho$ extends to a skew morphism of $B$. This connection transformed
the classification of regular Cayley maps into a problem about the structure of skew morphisms, and provided the original impetus for their systematic study. For further reading on skew morphisms in this context, we refer to \cite{ConderJajcayTucker2007b, ConderJajcayTucker2007, ConderTucker, FengJajcayWang,KovacsKwon}.

Beyond their role in map theory, skew morphisms have emerged as a natural object of independent interest in group theory, owing to a close correspondence with certain group factorisations. Recall that a subgroup $C$ of a group $G$ is \emph{core-free} in $G$ if the largest normal subgroup of $G$ contained in $C$ is trivial. A \emph{complementary product} is a factorisation $G = BC$ where $B$ and $C$ are subgroups of $G$ with $B \cap C = 1$. If in addition $C$ is cyclic and core-free in $G$, we say that $G$ is a \emph{skew product group} for $B$ with a \emph{skew complement} $C$. It was shown in \cite{ConderJajcayTucker2016} that skew morphisms of a finite group $B$ are in a correspondence with skew product groups for $B$. More precisely, if we let $c$ be a generator of $C$, then every element of $G$ writes uniquely as $xc^i$ with $x \in B$ and $i\in \{0,\dots,|C|-1\}$. One then defines a map $\varphi \colon B \to B$ and a function $\pi_\varphi \colon B \to \mathbb{Z}_{|C|}$ by the rule $cx = \varphi(x)\, c^{\pi_\varphi(x)}$ for all $x \in B$; it can be verified that $\varphi$ is a skew morphism of $B$ with power function $\pi_\varphi$. Conversely, if $\varphi$ is a skew morphism of $B$, then $\langle \varphi\rangle$ is a subgroup of $\Sym(B)$, and we may regard $B$ as a group of permutations of itself via left multiplication. The group $G = B\langle \varphi \rangle$ is then a skew product group for $B$ with skew complement $\langle \varphi \rangle$. This correspondence shows that skew morphisms can be used to study group factorisations with a cyclic, core-free complement, placing the subject firmly within the broader theory of group products. More recently, Hu and Jajcay \cite{HuJajcay} have extended this framework beyond the core-free setting, showing that skew morphisms and generalisations of their power functions can also be used to describe complementary products $G = BC$ where $C$ is cyclic but not necessarily core-free in $G$, thereby broadening the scope of the theory.

A central problem in the study of skew morphisms is the classification and enumeration of all skew morphisms for various families of finite groups. This has been achieved for several families: all skew morphisms of finite simple groups were determined in~\cite{BachratyConderVerret}; skew morphisms of dihedral groups were classified in \cite{HuKovacsKwon}; and for cyclic
$p$-groups with $p$ odd, a complete classification and enumeration was obtained in \cite{KovacsNedela2017}. The case of general finite cyclic groups remains open. While a recursive characterisation of all skew morphisms of $\mathbb{Z}_n$ has been given recently in \cite{BachratyHagara2}, this characterisation does not yield a closed formula for the number of skew morphisms of $\mathbb{Z}_n$. 

The correspondence with skew product groups has proven a useful tool in the classifications mentioned above, as illustrated for instance by the classification of skew morphisms of finite simple groups in \cite{BachratyConderVerret}. Yet knowing all skew product groups for a given group $B$ does not in itself resolve the enumeration problem. A single skew product group for $B$ may contain several subgroups isomorphic to $B$, each possibly admitting multiple cyclic core-free complements, and each complement multiple generators, with different choices potentially yielding different skew morphisms. This difficulty is illustrated by the case of cyclic $2$-groups: although all skew product groups for $\mathbb{Z}_{2^e}$ have been completely classified in \cite{DuHuLu}, this has not led to an enumeration of their skew morphisms. In this paper we provide such an enumeration, completing the enumeration of skew morphisms for all cyclic $p$-groups by treating the remaining case $p = 2$. Writing $\mathrm{Skew}(n)$ for the number of skew morphisms of $\mathbb{Z}_n$, our main result is the following.

\begin{theorem}\label{thm:main}
Let $e$ be a positive integer. If $e\geq 4$, then $\mathrm{Skew}(2^e) = 4\,\mathrm{Skew}(2^{e-1}) - 4$.
In particular, for each $e\geq 3$ we have $\mathrm{Skew}(2^e) = \frac{7\cdot 4^{e-2} + 8}{6}$.
\end{theorem}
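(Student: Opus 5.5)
Since the closed formula follows from the recursion by a one-line induction, the real content of Theorem~\ref{thm:main} is the recursion $\mathrm{Skew}(2^e)=4\,\mathrm{Skew}(2^{e-1})-4$ for $e\ge 4$. For the closed form we need a base value. A direct count for $\mathbb{Z}_8$ gives $\mathrm{Skew}(8)=6$: the four automorphisms plus two proper skew morphisms of order $4$. This agrees with $(7\cdot 4+8)/6=6$. If $s_e=(7\cdot 4^{e-2}+8)/6$, then
\[
4s_{e-1}-4=\frac{7\cdot 4^{e-2}+32}{6}-4=\frac{7\cdot 4^{e-2}+8}{6}=s_e,
\]
so the induction closes.

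For the recursion I will relate skew morphisms of $B=\mathbb{Z}_{2^e}$ to those of $\mathbb{Z}_{2^{e-1}}$ via quotients. Recall the standard facts.
\begin{itemize}
\item For a skew morphism $\varphi$ of a cyclic group, $\ker\varphi=\{x:\pi_\varphi(x)=1\}$ is a nontrivial subgroup when $\varphi$ is proper.
\item The subgroup $\langle 2\rangle\cong\mathbb{Z}_{2^{e-1}}$ is $\varphi$-invariant as soon as it contains $\ker\varphi$ or the orbit structure forces it.
\item In general, for cyclic $B$ every skew morphism induces a skew morphism $\bar\varphi$ on the quotient $B/K$ by any characteristic $\varphi$-invariant subgroup $K$ inside the kernel, in the sense of Kovács--Nedela.
\end{itemize}
Concretely I will use the map $\varphi\mapsto\bar\varphi$ where $\bar\varphi$ is the induced skew morphism on $\mathbb{Z}_{2^e}/\langle 2^{e-1}\rangle\cong\mathbb{Z}_{2^{e-1}}$. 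First I check that the order-$2$ subgroup is always contained in $\ker\varphi$ and is preserved by $\varphi$. This holds because it is the unique subgroup of order $2$, and kernels of skew morphisms of cyclic $2$-groups are nontrivial subgroups, except for $\varphi$ trivial, which is harmless. Then I show $\bar\varphi(x+\langle 2^{e-1}\rangle)=\varphi(x)+\langle 2^{e-1}\rangle$ is well defined with power function $\pi_\varphi\bmod \ord(\bar\varphi)$.

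The main step is counting fibres. For each skew morphism $\psi$ of $\mathbb{Z}_{2^{e-1}}$ I want exactly four lifts, with a correction accounting for the $-4$. A skew morphism of a cyclic group is determined by $\varphi(1)$ together with its power function on the orbit structure, and a lift of $\psi$ is determined by choosing $\varphi(1)\in\{a,a+2^{e-1}\}$ over a fixed preimage $a$ of $\psi(1)$, together with a choice of lift of the power function values modulo $2\,\ord(\psi)$ versus $\ord(\psi)$. So there are at most two choices of $\varphi(1)$ times two choices of order doubling or not, giving $4$. I will verify, using the explicit parametrisation of skew product groups for $\mathbb{Z}_{2^e}$ from \cite{DuHuLu}, or by direct check of the defining identity $\varphi(x+y)=\varphi(x)+\varphi^{\pi(x)}(y)$ on generators, that all four candidates are genuine skew morphisms for generic $\psi$.

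The exceptional $\psi$ should be the automorphisms $x\mapsto \pm x,\ \pm(1+2^{e-2})x$ of $\mathbb{Z}_{2^{e-1}}$, or whichever small family the computation singles out. Over these, some candidate lifts coincide or fail, and the loss should total exactly $4$. Identifying precisely which $\psi$ lose lifts, and showing each generic $\psi$ has exactly four lifts, is the hardest part. I would attack it through the skew product group classification: each lift corresponds to a core-free cyclic complement in a group $G$ with $G/\langle 2^{e-1}\rangle$ the skew product group of $\psi$, and the count becomes a cohomology/extension count, which is uniform for $e\ge4$.
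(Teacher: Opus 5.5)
Your reduction of the closed form to the recursion, the base value $\mathrm{Skew}(8)=6$, and the well-definedness of reduction modulo $2^{e-1}$ are all fine. The gap is the recursion itself: that every skew morphism $\psi$ of $\ZZ_{2^{e-1}}$ has exactly four lifts, except two which have exactly two. You assert this rather than prove it, and your heuristic for ``at most four'' does not work.

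A skew morphism is determined by its power function together with the \emph{whole} orbit of $1$ (Lemma~\ref{lem:skewfromorbit}), not by $\varphi(1)$ alone. Over a fixed $\psi$, every orbit element $\varphi^i(1)$ and every power-function value (one per coset of $\ker\varphi$) has a priori two lifts. So the naive count of candidates is far larger than four, and the whole difficulty is showing that it collapses. Nor is ``order doubling or not'' a free choice: if $\ord(\psi)\ge 4$, Lemma~\ref{lem:half1} forces $\ord(\varphi)=2\ord(\psi)$ for every lift. The real free parameters are $\varphi(1)$ and $\varphi(2)$. The missing key idea is that these two values determine the lift. The paper proves this in Proposition~\ref{prop:actions} by a double induction on $e$ and $\ord(\varphi')$, passing to $(\varphi')^2$ and to $\varphi'\vert_{\langle 2\rangle}$. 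It shows that any two lifts agree on $\langle 4\rangle$, their squares agree on $\langle 2\rangle$, and their fourth powers coincide; Theorem~\ref{thm:max4} then follows quickly.

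The lower bound is likewise only promised. Appealing to \cite{DuHuLu} is exactly the route the paper notes does not by itself give a count: a skew product group may contain several copies of $B$, several complements, and several generators. Your proposed correspondence is also off. When the order doubles, $G/\langle 2^{e-1}\rangle$ is not the skew product group of $\psi$, because the image of the complement has order $2\ord(\psi)$ and a core of order $2$. The paper's existence proof (Theorem~\ref{thm:exi}) is a concrete inductive construction. It lifts $\varphi'\vert_{\langle 2\rangle}$ to a skew morphism $\psi$ of $\ZZ_{2^{e-1}}$ with $\psi(1)=x_2/2$ and a specially chosen $\psi(2)$. It then sets $\varphi(2k)=2\psi(k)$ and $\varphi(1+2k)=x_1+2\psi^{\pi'(1)}(k)$, and verifies the skew identity via $\pi_\psi(x)\equiv\pi'(2x)$.

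Finally, your guessed exceptional set is wrong. Of the four automorphisms $x\mapsto tx$ of $\ZZ_{2^{e-1}}$ with $t^2=1$, only $t=1$ and $t=2^{e-2}-1$ are exceptional, with two lifts each, both automorphisms. By contrast, $t=2^{e-2}+1$ and $t=-1$ have four lifts each. Two of these are the proper order-$4$ skew morphisms $\varphi_1,\varphi_2$ (respectively $\varphi_3,\varphi_4$) of Lemma~\ref{lem:skeworder4}. Pinning this down requires classifying the skew morphisms of order $4$, which your plan does not do.
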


The proof of Theorem~\ref{thm:main} is motivated by an observation of Conder and Kov{\'a}cs from 2018. Using \textsc{Magma}~\cite{Magma}, they computed skew morphisms of $\mathbb{Z}_{2^e}$ for small values of $e$ and observed that for all but two skew morphisms $\varphi'$ of $\mathbb{Z}_{2^{e-1}}$, the number of skew morphisms of $\mathbb{Z}_{2^e}$ equal to $\varphi'$ when taken modulo $2^{e-1}$ is exactly four. The only exceptions were $\varphi'$ equal to the identity permutation or the automorphism $x \mapsto (2^{e-2}-1)x$, in which case this number is exactly two. A central contribution of this paper is a proof that this is indeed always the case, from which the recursive formula and the closed-form enumeration follow. The author is grateful to Conder and Kov{\'a}cs for communicating this observation.

The paper is organised as follows. In Section~\ref{sec:preli} we provide further background on skew morphisms, and in Section~\ref{sec:prop} we derive properties specific to skew morphisms of cyclic $2$-groups. The proof of Theorem~\ref{thm:main} is then split into three sections: Section~\ref{sec:small} treats the exceptional small cases, Section~\ref{sec:upper} establishes the upper bound $\mathrm{Skew}(2^e) \leq (7\cdot 4^{e-2}+8)/6$, and Section~\ref{sec:exi} completes the proof. We conclude with some remarks in Section~\ref{sec:rem}.

\section{Preliminaries}\label{sec:preli}

Let $\varphi$ be a skew morphism of a finite group $B$. Recall that the power function of $\varphi$ is a mapping $\pi_\varphi$ from $B$ to $\ZZ_{\ord(\varphi)}$ such that $\varphi(xy)=\varphi(x)\varphi^{\pi_\varphi(x)}(y)$ for all $x,y \in B$. The set of all elements of $B$ for which $\pi_\varphi(x)=1$ is called the \emph{kernel} of $\varphi$, denoted by $\ker\varphi$. It can be easily shown that $\ker\varphi$ is a subgroup of $B$, and in~\cite{ConderJajcayTucker2016} it was proved that if $B$ is non-trivial, then so is $\ker\varphi$. It is an easy exercise to show that two elements $x,y\in B$ are in the same right coset of $\ker\varphi$ in $B$ if and only if $\pi_\varphi(x)=\pi_\varphi(y)$; see~\cite{ConderJajcayTucker2007}. In particular, if $B$ is cyclic, then $\ker\varphi$ is also cyclic and generated by the element $|B|/|\ker\varphi|$, and so we have $\pi_\varphi(x)=\pi_\varphi(y)$ if and only if $x\equiv y \pmod{|B|/|\ker\varphi|}$. If $\varphi(\ker\varphi)=\ker\varphi$, then the restriction of $\varphi$ to $\ker\varphi$ is clearly an automorphism of $\ker\varphi$. While in general a skew morphism does not have to preserve its kernel set-wise, it can be shown that $\varphi(\ker\varphi)=\ker\varphi$ if $B$ is abelian; see~\cite{ConderJajcayTucker2007} again. 

Recall that every automorphism of $B$ is a skew morphism of $B$. We say that a skew morphism of $B$ is \emph{proper} if it is not an automorphism of $B$. Equivalently, a skew morphism $\varphi$ of $B$ is proper if and only if $\ker\varphi$ is a proper subgroup of $B$. If $\ord(\varphi)=2$, then by definition we have $\pi_\varphi(x)=1$ for all $x\in B$, and so $\varphi$ is an automorphism. It follows that every proper skew morphism of a finite group has order at least $3$.

Let $\varphi$ be a skew morphism of $\ZZ_n$, and let $m$ be a divisor of $n$ such that for any pair $x,y\in \ZZ_n$ we have $x\equiv y \pmod{m}$ if and only if $\varphi(x)\equiv \varphi(y) \pmod{m}$. Then the skew morphism $\varphi$ \emph{taken modulo} $m$ is the permutation of $\ZZ_m$ defined by $x\mapsto \varphi(x) \bmod{m}$. It can be easily checked that this permutation $\rho$ of $\ZZ_m$ is a skew morphism of $\ZZ_m$ with its power function given by $\pi_\rho(x)=(\pi_\varphi(x) \bmod{\ord(\rho)})$ for all $x\in \ZZ_m$.  

For each skew morphism $\varphi$ of a finite group $B$ we define a function $\sigma_{\varphi}$ from $\NN_0 \times B$ to $\ZZ_{\ord(\varphi)}$ given by $\sigma_{\varphi}(r,x)=\pi_\varphi(x)+\pi_\varphi(\varphi(x))+\dots+\pi_\varphi(\varphi^{r-1}(x))$. It can be shown that $\sigma_{\varphi}(\ord(\varphi),x)=0$, and so if in $\sigma_{\varphi}(r,x)$ we substitute $r$ by any non-negative integer congruent to $r$ modulo $\ord(\varphi)$, the value will remain the same; see~\cite{HuJajcay}. It follows that $\sigma_{\varphi}$ may be viewed as a function from $\ZZ_{\ord(\varphi)} \times B$ to $\ZZ_{\ord(\varphi)}$. In the following lemma we list some useful properties of this function.

\begin{lemma}[\cite{ConderJajcayTucker2016}]\label{lem:sigma}
Let $\varphi$ be a skew morphism of a finite group $B$ with power function $\pi_{\varphi}$, and let $r$ be a positive integer. Then for each $x,y,z\in B$ we have:
\begin{enumerate}[label={\rm (\alph*)},ref=\ref{lem:sigma}(\alph*)]
\item\label{lem:sigma:a} $\varphi^r(xy)=\varphi^r(x)\varphi^{\sigma_{\varphi}(r,x)}(y)$;
\item\label{lem:sigma:b} $\pi_{\varphi}(xy)=\sigma_{\varphi}(\pi_{\varphi}(x),y)$;
\item\label{lem:sigma:c} $\sigma_{\varphi}(r,x)=\pi_{\varphi}^{\, r-1}(x)+\sigma_\varphi(r-1,x)$; and
\item\label{lem:sigma:d} $\varphi^{\pi_{\varphi}(z)}(xy)=\varphi^{\pi_{\varphi}(z)}(x)\varphi^{\pi_{\varphi}(z+x)}(y)$.
\end{enumerate}
\end{lemma}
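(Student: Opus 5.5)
We prove the four parts in the order (c), (a), (b), (d). Each later part uses only the definition of $\sigma_\varphi$ and the parts before it. Throughout, exponents of $\varphi$ are read modulo $\ord(\varphi)$, which is harmless because $\varphi^{\ord(\varphi)}$ is the identity. If $\varphi$ is the identity permutation, every statement is trivial, so assume $\varphi$ is non-trivial.

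Part (c) is essentially the definition, but the printed form needs a word. Written out term by term, $\sigma_\varphi(r,x)$ is the sum of $\sigma_\varphi(r-1,x)$ and the last summand. We read the term printed as $\pi_{\varphi}^{\,r-1}(x)$ as $\pi_\varphi(\varphi^{r-1}(x))$, since that is the last summand. Under this reading (c) says
\[
\sigma_\varphi(r,x)=\sigma_\varphi(r-1,x)+\pi_\varphi(\varphi^{r-1}(x)),
\]
with the convention $\sigma_\varphi(0,x)=0$, and nothing further needs proof. If the literal $r-1$ iterate of the function $\pi_\varphi$ was intended, the statement would need separate justification, and we do not pursue that reading.

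For (a) we induct on $r$. The case $r=1$ is the defining relation $\varphi(xy)=\varphi(x)\varphi^{\pi_\varphi(x)}(y)$. For the inductive step, apply $\varphi$ to the identity $\varphi^{r-1}(xy)=\varphi^{r-1}(x)\,\varphi^{\sigma_\varphi(r-1,x)}(y)$. Using the defining relation with first factor $\varphi^{r-1}(x)$ gives
\[
\varphi^r(xy)=\varphi^r(x)\,\varphi^{\pi_\varphi(\varphi^{r-1}(x))+\sigma_\varphi(r-1,x)}(y),
\]
and the exponent equals $\sigma_\varphi(r,x)$ by (c).

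For (b), expand $\varphi(xyz)$ in two ways. Grouping as $(xy)z$ gives
\[
\varphi(xyz)=\varphi(xy)\,\varphi^{\pi_\varphi(xy)}(z)=\varphi(x)\,\varphi^{\pi_\varphi(x)}(y)\,\varphi^{\pi_\varphi(xy)}(z).
\]
Grouping as $x(yz)$ and applying (a) with $r=\pi_\varphi(x)\ge 1$ gives
\[
\varphi(xyz)=\varphi(x)\,\varphi^{\pi_\varphi(x)}(yz)=\varphi(x)\,\varphi^{\pi_\varphi(x)}(y)\,\varphi^{\sigma_\varphi(\pi_\varphi(x),y)}(z).
\]
Cancelling the common left factors in $B$ yields $\varphi^{\pi_\varphi(xy)}(z)=\varphi^{\sigma_\varphi(\pi_\varphi(x),y)}(z)$ for every $z\in B$. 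Hence the two powers of $\varphi$ coincide, so the exponents agree modulo $\ord(\varphi)$. Since both $\pi_\varphi$ and $\sigma_\varphi$ take values in $\ZZ_{\ord(\varphi)}$, this is exactly (b). This is the only step with real content, because it needs the well-definedness of $\sigma_\varphi$ modulo $\ord(\varphi)$, which is taken from \cite{HuJajcay} as recalled above.

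Finally, (d) is (a) with $r=\pi_\varphi(z)$:
\[
\varphi^{\pi_\varphi(z)}(xy)=\varphi^{\pi_\varphi(z)}(x)\,\varphi^{\sigma_\varphi(\pi_\varphi(z),x)}(y),
\]
and (b) with the pair $(z,x)$ rewrites the exponent as $\pi_\varphi(zx)$. In the additive notation used for cyclic groups, $zx$ is written $z+x$.
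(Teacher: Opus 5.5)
Your proof is correct. The paper gives no proof of this lemma; it quotes it from \cite{ConderJajcayTucker2016}. Your argument is the standard direct one: induction on $r$ for (a), expanding $\varphi(xyz)$ in two ways and cancelling for (b), and combining (a) with (b) for (d). So there is no paper proof to compare against.

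Your reading of (c) as $\sigma_\varphi(r,x)=\sigma_\varphi(r-1,x)+\pi_\varphi(\varphi^{r-1}(x))$ is the intended one. The paper uses it exactly this way in the proof of Theorem~\ref{thm:exi}, where it writes $\pi'((\varphi')^{j-1}(1))+\sigma_{\varphi'}(j-1,1)=\sigma_{\varphi'}(j,1)$.

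You do not need to import the well-definedness of $\sigma_\varphi$ modulo $\ord(\varphi)$ from elsewhere.
\begin{itemize}
\item In (b) you apply (a) with the integer representative $r=\pi_\varphi(x)\ge 1$, so the argument never uses it.
\item Part (a) with $r=\ord(\varphi)$ gives $xy=x\,\varphi^{\sigma_\varphi(\ord(\varphi),x)}(y)$ for all $y$. Hence $\sigma_\varphi(\ord(\varphi),x)=0$.
\item Splitting the defining sum then gives $\sigma_\varphi(r+\ord(\varphi),x)=\sigma_\varphi(r,x)$.
\end{itemize}
With this, your write-up is fully self-contained.
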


We now give some facts about orders of skew morphisms that will be used throughout the paper. 

\begin{theorem}[\cite{ConderJajcayTucker2016}]
\label{thm:orderofskew}
The order of a skew morphism of a non-trivial finite group $B$ is less than the order of $B$.
\end{theorem}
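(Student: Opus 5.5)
We prove Theorem~\ref{thm:orderofskew} through the correspondence with skew product groups. Let $\varphi$ be a skew morphism of $B$, with $|B|=n>1$ and $m=\ord(\varphi)$. Then $G=B\langle\varphi\rangle\le\Sym(B)$ is a complementary product with $B\cap\langle\varphi\rangle=1$, where $B$ acts by left multiplication and $C=\langle\varphi\rangle$ is cyclic of order $m$. Moreover $C$ is the stabiliser of the identity in $G$: $B$ acts regularly and $C$ fixes $1_B$. Since $G$ is a permutation group on $B$, the core of $C$ in $G$ is the kernel of the action, which is trivial, so $C$ is core-free. The task is therefore to show that if $G=BC$ with $B\cap C=1$, $C$ cyclic and core-free, and $B\neq 1$, then $|C|<|B|$.

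The key step is to show that $C$ acts semiregularly on some nonidentity structure of size at most $n-1$. First, $C$ acts faithfully on $B=G/C$, and since $C$ fixes the point $1_B$, it acts faithfully on the $n-1$ remaining points. The next idea is to use that $\ker\varphi$ is nontrivial, a fact quoted above from \cite{ConderJajcayTucker2016}. Here I realise this nontriviality is itself usually derived from the order bound, so the order bound must be proved directly.

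The direct argument goes as follows. Suppose $|C|\ge n$. Take a generator $c$ of $C$ and consider the orbit of $c$ acting on $B\setminus\{1\}$. Since $C$ is cyclic and faithful on $n-1$ points, its order is the lcm of its orbit lengths. This alone does not force $m<n$, because the lcm of cycle lengths can exceed the number of points. So I would exploit the skew structure and show that some orbit $T$ of $\langle\varphi\rangle$ generating $B$ determines $\varphi$ entirely. Concretely, by Lemma~\ref{lem:sigma:a} $\varphi^r$ is determined by its values on a generating set closed under $\varphi$ together with the power function. If $\varphi^r$ fixes an orbit $T$ pointwise and $T$ generates $B$, then $\varphi^r$ is the identity, by induction on word length via $\varphi^r(xy)=\varphi^r(x)\varphi^{\sigma(r,x)}(y)$. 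This last step needs $\varphi^{\sigma(r,x)}$ also to fix $T$, which holds if the stabiliser subgroup of $T$'s points is preserved.

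The cleaner route I would actually commit to is the standard one from \cite{ConderJajcayTucker2016}, which works in $G$. Let $T$ be the orbit of some element $x\neq 1$ under $C$ of maximum length. Then $\langle\varphi\rangle$ acts on $T$ with $|T|\le n-1$, and the kernel of this action fixes every element of $T$. One shows that the pointwise stabiliser of $T$ in $C$ is normalised by $B$, using $cx=\varphi(x)c^{\pi(x)}$ and the fact that subgroups of a cyclic group are characteristic. Normality by $B$ and by the abelian group $C$ then makes it normal in $G$, so core-freeness kills it. Hence $C$ acts faithfully and regularly, being cyclic and transitive, on $T$, giving $m=|T|\le n-1<n$.

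The main obstacle is the normality claim. Conjugating by $x\in B$ sends $c^k$ to $x^{-1}c^kx$, and showing this lies again in $C$ fixing $T$ requires controlling $\varphi^k(xy)$ via $\sigma$. Here Lemma~\ref{lem:sigma:a} and Lemma~\ref{lem:sigma:b} do the bookkeeping.
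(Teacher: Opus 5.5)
Your reduction in the first paragraph is correct: $C=\langle\varphi\rangle$ is a core-free cyclic complement to $B$ in $G=B\langle\varphi\rangle$, so the statement amounts to $|C|<|G:C|$. The gap is the step you yourself flag as the main obstacle, and it is not merely unproved but false. Identify $x\in B$ with left multiplication by $x$. Then $x^{-1}\varphi^{j}x=\bigl(x^{-1}\varphi^{j}(x)\bigr)\varphi^{\sigma_\varphi(j,x)}$. Because $B\cap C=1$, this lies in $C$ if and only if $\varphi^{j}(x)=x$. Hence ``the pointwise stabiliser $K$ of $T$ is normalised by $B$'' says precisely that $K$ fixes every element of $B$, i.e.\ that $K=1$. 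That is the very conclusion you want. Maximality of $|T|$ gives no handle on it, and no bookkeeping with Lemma~\ref{lem:sigma:a} and Lemma~\ref{lem:sigma:b} can produce it. Your earlier route via Theorem~\ref{thm:orderoforbit} has a related defect: it needs an orbit that generates $B$, and none need exist.

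In fact a skew morphism need not have any orbit of length $\ord(\varphi)$. The stabiliser in $C$ of the point $bC$ is $C\cap bCb^{-1}$. This is nontrivial whenever $b$ normalises a prime-order subgroup of $C$, and once $|C|$ has three prime divisors these normalisers can cover $G$. Here is an explicit example:
\begin{itemize}
\item take distinct primes $p_1,p_2,p_3$ and $Q_i=\ZZ_{p_i}\times\ZZ_{p_i}$;
\item let $V\cong\ZZ_2\times\ZZ_2$ act on $Q_i$ by swapping coordinates via the $i$-th of its three epimorphisms $\epsilon_i\colon V\to\ZZ_2$;
\item put $N=Q_1\times Q_2\times Q_3$ and $G=N\rtimes V$;
\item let $P_i=\ZZ_{p_i}\times 0\le Q_i$ and $C=P_1P_2P_3\cong\ZZ_{p_1p_2p_3}$;
\item let $B=(\Delta_1\Delta_2\Delta_3)\rtimes V$, where $\Delta_i$ is the diagonal of $Q_i$.
\end{itemize}
Then $|G|=|B||C|$ and $B\cap C=1$, so $G=BC$. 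Moreover $C$ is core-free, since each $P_i$ is conjugated out of $C$ by some element of $V$. However, with $V_i=\ker\epsilon_i$, each $N_G(P_i)$ contains the index-$2$ subgroup $NV_i$, and these three subgroups cover $G$. So every point stabiliser of $C$ contains some $P_i$. Consequently every orbit of the associated skew morphism of $B$ has length at most $p_1p_2p_3/\min_i p_i<\ord(\varphi)$, and no choice of $T$ carries a faithful action of $C$.

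The proof behind the citation \cite{ConderJajcayTucker2016} (the present paper gives none) does not go through a regular orbit. It rests on the group-theoretic theorem of Lucchini, obtained independently by Herzog and Kaplan: a proper core-free cyclic subgroup $C$ of a finite group $G$ satisfies $|C|<|G:C|$. Combined with your reduction, this gives the statement at once.

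Your orbit argument does go through in two special cases:
\begin{itemize}
\item when some orbit generates $B$, for instance when $B$ is cyclic, which is the only case this paper uses;
\item when $|C|$ has at most two prime divisors, since a group is not a union of two proper subgroups, so some $bC$ has trivial stabiliser in $C$.
\end{itemize}
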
 

\begin{theorem}[\cite{KovacsNedela2011}]
\label{thm:ordercyclic}
If $\varphi$ is a skew morphism of a group $\ZZ_n$, then $\ord(\varphi)$ divides $n\phi(n)$. Moreover, if $\gcd(\ord(\varphi),n)=1$, then $\varphi$ is an automorphism of $\ZZ_n$.
\end{theorem}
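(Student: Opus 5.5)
The plan is to work in the skew product group and argue by induction on $n$. Let $m=\ord(\varphi)$, let $B=\ZZ_n$ act on itself by translations, and let $G=B\langle\varphi\rangle$. Then $|G|=nm$, and $C=\langle\varphi\rangle$ is a cyclic core-free complement to $B$. Let $K=\ker\varphi$. It is non-trivial, and since $B$ is abelian, $\varphi(K)=K$. Put $d=n/|K|$, so that $K=\langle d\rangle$ and $\pi_\varphi(x)$ depends only on $x \bmod d$.

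First I will show that $\varphi$ can be taken modulo $d$. For $k\in K$ we have $\varphi(x+k)=\varphi(x)+\varphi^{\pi_\varphi(x)}(k)$. Since $\varphi$ preserves $K$, this gives $\varphi(x+k)\equiv\varphi(x)\pmod d$. Bijectivity of $\varphi$ then yields the converse implication. Hence $\psi=\varphi \bmod d$ is a skew morphism of $\ZZ_d$ with $d<n$. If $d=1$, then $\varphi$ is an automorphism and $m$ divides $\phi(n)$. Otherwise, by induction $t=\ord(\psi)$ divides $d\phi(d)$.

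Next I will analyse $\theta=\varphi^t$, which fixes every coset $x+K$. The restriction $\theta|_K$ is an automorphism $k\mapsto ak$ of the cyclic group $K$, so its order divides $\phi(|K|)$. Lemma~\ref{lem:sigma:a} gives $\theta(x+k)=\theta(x)+\varphi^{\sigma_\varphi(t,x)}(k)$. Here $\varphi^{\sigma_\varphi(t,x)}|_K$ is again a power of the automorphism $\varphi|_K$, say multiplication by some $b_x$. So on each coset $x+K$, $\theta$ acts as an affine map $k\mapsto c_x+b_xk$ relative to a base point. Therefore $\langle\theta\rangle$ embeds in a product of affine groups of $\ZZ_{|K|}$, whose translation parts are $|K|$-groups. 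I then aim to show that $\ord(\theta)$ divides $|K|\,\phi(|K|)$ in a way compatible with $t\mid d\phi(d)$, so that $m=t\cdot\ord(\theta)$ divides $nm'$ with $m'\mid\phi(n)$. Here I use $n=d|K|$ and $\phi(d)\phi(|K|)\mid\phi(n)$ up to the common prime factors, which the multiplicativity of $\phi$ on prime powers handles once the primes of $d$ and $|K|$ are tracked.

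This last divisibility bookkeeping is the main obstacle. A naive product $d\phi(d)\cdot|K|\phi(|K|)$ need not divide $n\phi(n)$. To get around this, I would refine the estimate: the multipliers $b_x$ all lie in the cyclic group $\langle\varphi|_K\rangle$, whose order divides $\phi(|K|)$, and that group already accounts for the $\phi(d)$-part coming from $\psi$. This should let the two $\phi$-factors merge into a single divisor of $\phi(n)$, by writing $n$ prime by prime.

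For the second statement, suppose $\gcd(m,n)=1$. I will show that $K=B$ by induction along the same reduction. By the first part, $m\mid n\phi(n)$. If $K\neq B$, then $\psi$ is a skew morphism of $\ZZ_d$ with $\ord(\psi)\mid m$, so $\gcd(\ord(\psi),d)=1$ and by induction $\psi$ is an automorphism; thus $\pi_\psi\equiv 1$, i.e.\ $\pi_\varphi(x)\equiv1\pmod t$. Then each $\theta$ restricted to a coset is affine with translation part of order dividing a power of the primes of $|K|$. Since $\gcd(\ord(\theta),n)=1$, these translation parts vanish, and $\varphi$ commutes with all translations by $K$ in a compatible way. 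This forces $\pi_\varphi\equiv1$, contradicting $K\neq B$.
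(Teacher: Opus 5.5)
The paper only quotes this result from Kov\'acs and Nedela, so your argument has to stand on its own. Your architecture is sound: quotient by $K=\ker\varphi$, then study $\theta=\varphi^t$ coset by coset. But both halves stop exactly at the decisive step.

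\textbf{First part.} What you actually prove is that every multiplier $b_x$ lies in $\langle\varphi|_K\rangle$. That only gives $m\mid t\,r_0\,|K|$ with $r_0=\ord(\varphi|_K)$. Already for $n=p^2$ and $d=|K|=p$ this leaves room for a factor $(p-1)^2$, so it does not yield $m\mid p^3(p-1)$. Your remedy, that $\langle\varphi|_K\rangle$ ``already accounts for the $\phi(d)$-part coming from $\psi$'', is an expectation, not an argument: nothing you have proved links $t$ to the multipliers.

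The missing observation is that the multiplier does not depend on the coset. For $k\in K$ all $\varphi^j(k)$ lie in $K$, so $\sigma_\varphi(i,k)=i$. Then Lemma~\ref{lem:sigma:a} applied to $k+x$ (rather than $x+k$) gives $\varphi^i(x+k)=\varphi^i(x)+\varphi^i(k)$ for all $i$. Writing $\varphi|_K$ as $k\mapsto a_0k$, the map $\theta$ acts on every coset as $k\mapsto c_x+a_0^t k$. Hence $\ord(\theta)\mid |K|\,\ord(a_0^t)$, and so $m=t\,\ord(\theta)\mid |K|\,t\,r_0/\gcd(t,r_0)=|K|\,\mathrm{lcm}(t,r_0)$. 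Since $t\mid d\phi(d)\mid d\phi(n)$ and $r_0\mid\phi(|K|)\mid\phi(n)$, the lcm divides $d\phi(n)$. Therefore $m\mid n\phi(n)$, with no prime-by-prime bookkeeping.

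\textbf{Second part.} Your last sentence is a non sequitur. The fact that $\varphi$ ``commutes with translations by $K$'', i.e.\ $\varphi(x+k)=\varphi(x)+\varphi(k)$ for $k\in K$, holds for every skew morphism, so it cannot force $\pi_\varphi\equiv 1$. Also, ``translation parts vanish'' is not literally meaningful, since the translation part depends on the chosen base point. What coprimality really gives is this: any power of $\theta$ that acts on each coset of $K$ as a translation has order dividing both $|K|$ and $m$, hence is trivial.

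To exploit this, compare the two expansions of $\varphi(1+k)$ for $k\in K$: $\varphi(1)+a_0^{\pi_\varphi(1)}k=\varphi(1+k)=\varphi(k+1)=a_0k+\varphi(1)$. So $a_0^{\pi_\varphi(1)-1}$ is trivial on $K$. Your inductive step ($\psi$ is an automorphism) gives $\pi_\varphi(1)-1=tj$ for some $j$. Then $\theta^j$ fixes every coset and, by the identity above, acts on each as a translation by an element of $K$. Hence $\theta^j=1$, so $\varphi^{\pi_\varphi(1)}=\varphi\theta^j=\varphi$ and $\pi_\varphi(1)=1$. Now $1\in K$ forces $K=B$, the desired contradiction.

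With these two additions your strategy becomes a complete proof. Without them, neither the divisibility nor the automorphism claim is established.
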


\begin{theorem}[\cite{KovacsNedela2011}]
\label{thm:orderoforbit}
Let $\varphi$ be a skew morphism of a finite group $B$, and let $T$ be an orbit of $\langle \varphi \rangle$. If $T$ generates $B$, then $\ord(\varphi)=|T|$. In particular, if $B=\ZZ_n$, then $\ord(\varphi)$ is equal to the smallest positive integer $i$ satisfying $\varphi^i(1)=1$.
\end{theorem}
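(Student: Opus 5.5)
The plan is to pass to the skew product group and use core-freeness. By the correspondence recalled in the introduction, $G=B\langle\varphi\rangle$ with $C=\langle\varphi\rangle$ is a skew product group for $B$. Here $B$ acts regularly on itself by left multiplication and $C$ is the stabiliser of the identity. We identify $B$ with the coset space $G/C$ via $x\mapsto xC$, and write $c=\varphi$ for the generator of $C$. Then the action of $c^i$ on $G/C$ is exactly the action of $\varphi^i$ on $B$.

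Set $k=|T|$. Since $T$ is an orbit of the cyclic group $\langle\varphi\rangle$, the element $\varphi^k$ fixes every point of $T$. Moreover $|T|$ divides $\ord(\varphi)$, so $k\le \ord(\varphi)$. It therefore suffices to show that $\varphi^k=1$.

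The key step is group-theoretic. For each $t\in T$, the point $tC$ is fixed by $c^k$, so $c^k$ lies in the point stabiliser $tCt^{-1}$. Put $H=\langle c^k\rangle\le C$ and let $d=|H|$. Since $tCt^{-1}$ is cyclic, it has a unique subgroup of order $d$. Both $H$ and $tHt^{-1}$ are subgroups of $tCt^{-1}$ of order $d$, so $H=tHt^{-1}$. Hence every $t\in T$ normalises $H$, and because $T$ generates $B$, all of $B$ normalises $H$. Also $C$ normalises $H$, as $C$ is abelian. Therefore $H$ is normal in $G=BC$.

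Since $H\le C$ and $C$ is core-free in $G$, it follows that $H=1$. Thus $c^k=1$, that is, $\varphi^k$ is the identity, and so $\ord(\varphi)\le k$. Combined with $k\le\ord(\varphi)$ this gives $\ord(\varphi)=|T|$.

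For the final assertion, take $B=\ZZ_n$ and let $T$ be the orbit of $1$, which generates $\ZZ_n$. The size of this orbit is the least $i>0$ with $\varphi^i(1)=1$, so the first part gives the result.

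The only delicate point is checking that the identification of $B$ with $G/C$ really makes $c^i$ act as $\varphi^i$, so that point stabilisers are the conjugates $tCt^{-1}$. This follows from the rule $cx=\varphi(x)c^{\pi_\varphi(x)}$, which gives $c\,xC=\varphi(x)C$. Once that is in place, the uniqueness of subgroups of a given order in a cyclic group does all the work.
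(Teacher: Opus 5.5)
Your proof is correct and complete. The paper gives no proof of this statement: it is quoted from Kov\'{a}cs and Nedela and used as a black box. So there is no in-paper argument to compare against, and what you give is a self-contained derivation from the skew product correspondence recalled in the introduction.

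The key step is right. Every element of $T$ is fixed by $\varphi^{|T|}$, so $H=\langle\varphi^{|T|}\rangle$ lies in each cyclic point stabiliser $L_tCL_t^{-1}$. That stabiliser has a unique subgroup of each order, so $L_tHL_t^{-1}=H$ for every $t\in T$. Since $T$ generates $B$, this normal-closure argument shows that $H$ is normal in $G=L_BC$. Core-freeness of $C$ then forces $H=1$, because the core of $C$ is the intersection of the point stabilisers $L_bCL_b^{-1}$ over $b\in B$, and $G\le\Sym(B)$ acts faithfully.

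The point you flag as delicate is not actually an issue. $G=L_B\langle\varphi\rangle$ is a concrete subgroup of $\Sym(B)$ in which $c$ is literally $\varphi$. Moreover, $G_{1_B}=C$ follows at once from $G=L_BC$, since $(L_xc)(1_B)=x$. So no identification with a coset space is needed.
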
 

We will also need the following. 

\begin{lemma}[\cite{BachratyJajcay2016}]
\label{lem:power}
Let $\varphi$ be a skew morphism of a finite group $B$, and let $i$ be a positive integer. Then $\varphi^i$ is a skew morphism of $B$ if and only if for every $x\in B$ there exists some $a_x\in \ZZ_{\ord(\varphi)}$ such that $\sigma_{\varphi}(i,x)\equiv ia_x \pmod{\ord(\varphi)}$. If $\varphi^i$ is a skew morphism of $B$, then $\pi_{\varphi^i}(x)$ is the smallest positive solution $a_x$ of the above congruence.
\end{lemma}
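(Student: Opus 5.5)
The plan is to compare the defining identity of a skew morphism for $\varphi^i$ with the identity for powers of $\varphi$ in Lemma~\ref{lem:sigma:a}, and then read off both the criterion and the power function by working with exponents modulo $\ord(\varphi)$. Throughout, write $n=\ord(\varphi)$ and $m=\ord(\varphi^i)=n/\gcd(i,n)$. Since $\varphi$ fixes the identity and is a permutation, so is $\varphi^i$. So only the skew identity needs checking.

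\emph{Sufficiency.} Suppose that for every $x\in B$ there is $a_x$ with $\sigma_\varphi(i,x)\equiv ia_x \pmod{n}$. By Lemma~\ref{lem:sigma:a},
\[
\varphi^i(xy)=\varphi^i(x)\,\varphi^{\sigma_\varphi(i,x)}(y)=\varphi^i(x)\,(\varphi^i)^{a_x}(y)
\]
for all $y\in B$. Here the exponent of $\varphi$ only matters modulo $n$, and we may take $a_x$ to be a non-negative representative. This is exactly the defining property of a skew morphism for $\varphi^i$, with $i_x=a_x$.

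\emph{Necessity.} Suppose $\varphi^i$ is a skew morphism, and fix $x\in B$. Then there is $b\ge 0$ with $\varphi^i(xy)=\varphi^i(x)(\varphi^i)^b(y)$ for all $y$. Comparing this with Lemma~\ref{lem:sigma:a} and cancelling $\varphi^i(x)$ on the left gives
\[
\varphi^{\sigma_\varphi(i,x)}(y)=\varphi^{ib}(y)\quad\text{for all } y\in B.
\]
Hence $\varphi^{\sigma_\varphi(i,x)-ib}$ is the identity, and therefore $\sigma_\varphi(i,x)\equiv ib\pmod n$. So $a_x=b$ works.

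\emph{The power function.} The solutions $a$ of $ia\equiv \sigma_\varphi(i,x)\pmod n$ form a single residue class modulo $n/\gcd(i,n)=m$. Indeed, $ia\equiv ia'\pmod n$ holds if and only if $a\equiv a'\pmod m$.

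\begin{itemize}
\item By the argument above, an integer $a$ is a valid exponent $i_x$ for $\varphi^i$ precisely when $(\varphi^i)^a$ agrees with $\varphi^{\sigma_\varphi(i,x)}$. Equivalently, $a$ is such an exponent precisely when it solves the congruence.
\item So the valid exponents are exactly this residue class modulo $m=\ord(\varphi^i)$.
\item Its smallest positive element lies in $\{1,\dots,m\}$. It represents the unique value of $\pi_{\varphi^i}(x)$ in $\ZZ_m$ (with $m$ read as $0$).
\item In the degenerate case $\varphi^i=\mathrm{id}$ we have $m=1$. Every positive integer is then a solution, and the smallest is $1$, matching the convention $\pi_{\mathrm{id}}\equiv 1$.
\end{itemize}

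The only delicate point is this bookkeeping between residues modulo $n$ and modulo $m$, together with the stated conventions for the power function. The rest is immediate from Lemma~\ref{lem:sigma:a}.
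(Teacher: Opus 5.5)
Your proof is correct. The paper does not prove this lemma; it quotes it from \cite{BachratyJajcay2016}. Your argument is the standard direct one: you compare $\varphi^i(xy)=\varphi^i(x)\varphi^{\sigma_\varphi(i,x)}(y)$ from Lemma~\ref{lem:sigma:a} with the defining identity for $\varphi^i$, and cancel to get $\varphi^{\sigma_\varphi(i,x)-ib}=\mathrm{id}$, so $\sigma_\varphi(i,x)\equiv ib \pmod{\ord(\varphi)}$.

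One small tightening is possible in place of ``reading $m$ as $0$''. For $\varphi^i\neq\mathrm{id}$, the solution class is never $0 \pmod m$. Indeed, a valid exponent $a$ with $(\varphi^i)^a=\mathrm{id}$ would give $\varphi^i(xy)=\varphi^i(x)y$ for all $y$. Taking $y=x^{-1}$ forces $\varphi^i(x)=x$, and then $\varphi^i=\mathrm{id}$. So the smallest positive solution automatically lies in $\{1,\dots,m-1\}$, which is exactly the normalisation the paper uses for $\pi_{\varphi^i}$.
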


\begin{lemma}[\cite{Bachraty}]\label{lem:skewfromorbit}
Let $\varphi$ be a skew morphism of $\ZZ_n$ with power function $\pi_{\varphi}$. Then for each $x\in \ZZ_n$ we have
\[ \varphi(x)=\varphi(1)+\varphi^{\pi_{\varphi}(1)}(1)+\varphi^{\pi_{\varphi}(2)}(1)+ \dots +\varphi^{\pi_{\varphi}(x-1)}(1) \, . \] 
In particular, every skew morphism of $\ZZ_n$ is uniquely determined by its power function and the orbit of $\langle \varphi \rangle$ that contains $1$.
\end{lemma}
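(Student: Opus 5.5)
The plan is to prove the formula by induction on $x$, using only the defining identity of a skew morphism. The one ingredient from the preliminaries is the convention that $\varphi^{0}$ is the identity and $\pi_\varphi(0)$ may be taken as $1$, since $0\in\ker\varphi$.

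For the base case $x=1$, the right-hand side consists only of the term $\varphi(1)$, so there is nothing to prove. The case $x=0$ (equivalently $x=n$) is handled at the end.

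For the inductive step, suppose the formula holds for some $x\ge 1$. Write $x+1=x+1$ as a sum in $\ZZ_n$ and apply the skew morphism identity $\varphi(xy)=\varphi(x)\varphi^{\pi_\varphi(x)}(y)$, written additively, with $y=1$:
\[
\varphi(x+1)=\varphi(x)+\varphi^{\pi_\varphi(x)}(1).
\]
Substituting the inductive hypothesis for $\varphi(x)$ gives
\[
\varphi(x+1)=\varphi(1)+\varphi^{\pi_\varphi(1)}(1)+\dots+\varphi^{\pi_\varphi(x-1)}(1)+\varphi^{\pi_\varphi(x)}(1),
\]
which is exactly the claimed formula for $x+1$. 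Running the induction up to $x=n$ recovers $\varphi(0)=0$ as a consistency check: the full sum over one period equals $\varphi(n)=\varphi(0)=0$. So the formula is valid for every residue, with $x$ read as its representative in $\{1,\dots,n\}$.

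For the ``in particular'' part, every summand $\varphi^{\pi_\varphi(i)}(1)$ lies in the orbit of $\langle\varphi\rangle$ containing $1$. Its exact value is determined by two pieces of data: the integer $\pi_\varphi(i)$, and the orbit of $1$ listed in order, $1,\varphi(1),\varphi^2(1),\dots$. Hence knowing $\pi_\varphi$ together with this ordered orbit determines $\varphi(x)$ for all $x$.

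There is no real obstacle. The only care needed is interpreting the orbit as the ordered sequence $(\varphi^j(1))_j$, which the statement implicitly means, and noting that exponents are taken modulo $\ord(\varphi)$, which is consistent because $\varphi^{\ord(\varphi)}$ is the identity.
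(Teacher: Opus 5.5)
Your proof is correct and is essentially the standard argument for this lemma: iterate $\varphi(x+1)=\varphi(x)+\varphi^{\pi_\varphi(x)}(1)$ starting from $x=1$, which is how the paper uses this relation throughout. (The paper does not reprove the lemma; it cites it from \cite{Bachraty}.) Your reading of ``the orbit containing $1$'' as the ordered cycle $(\varphi^j(1))_j$ is the intended one, matching the paper's use of the lemma with cycles such as $(1,a,1+2^{e-1},a+2^{e-1})$, and your opening remark about $\pi_\varphi(0)$ is harmless but never actually used.
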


\begin{lemma}[\cite{KovacsNedela2011}]\label{lem:derived}
If $\varphi$ is a skew morphism of $\ZZ_n$, then the mapping $\overline{\varphi} \!: \ZZ_{\ord(\varphi)} \to \ZZ_{\ord(\varphi)}$ given by $\overline{\varphi}(x) = \sigma_{\varphi}(x,1)$ is a well-defined skew morphism of $\ZZ_{\ord(\varphi)}$.
\end{lemma}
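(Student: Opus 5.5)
The plan is to work in the skew product group attached to $\varphi$ and read $\overline{\varphi}$ off from the \emph{reverse} factorisation. Let $m=\ord(\varphi)$ and set $G=B\langle\varphi\rangle$ with $B=\ZZ_n$ and $C=\langle c\rangle$, where $c=\varphi$. Write $b$ for the generator $1$ of $B$, written multiplicatively inside $G$. By Lemma~\ref{lem:sigma:a}, iterating the defining relation $cx=\varphi(x)c^{\pi_\varphi(x)}$ gives
\[ c^r x=\varphi^r(x)\,c^{\sigma_\varphi(r,x)} \qquad\text{for all } r\ge 0,\ x\in B. \]
Since $\sigma_\varphi(m,x)=0$, as recalled above, the exponent is well defined modulo $m$. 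Hence $\overline{\varphi}(r)=\sigma_\varphi(r,1)$ is well defined as a map $\ZZ_m\to\ZZ_m$, and $\overline{\varphi}(0)=0$.

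First step: introduce the auxiliary map $\rho$. Because $B\cap C=1$ and $|G|=|B||C|$, we also have $G=CB$ with unique expressions $c^u b^j$. Taking inverses in the displayed identity with $x=b$ gives
\[ b^{-1}c^{-r}=c^{-\sigma_\varphi(r,1)}\,b^{-\varphi^r(1)}. \]
Writing $u=-r$, this says $b^{-1}c^u=c^{\rho(u)}b^{t_u}$, where $\rho(u)=-\overline{\varphi}(-u)$ and $t_u=-\varphi^{-u}(1)$.

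Second step: show $\rho$ is a skew morphism of $\ZZ_m$.
\begin{itemize}
\item $\rho$ is a permutation fixing $0$. The left cosets $c^uB$, $u\in\ZZ_m$, are pairwise distinct. Left multiplication by $b^{-1}$ permutes left cosets of $B$, and $b^{-1}c^uB=c^{\rho(u)}B$. So $u\mapsto\rho(u)$ is injective, hence bijective.
\item The skew rule. Expand $b^{-1}c^{u+v}=c^{\rho(u)}\,b^{t_u}c^v$. Write $b^{t_u}=(b^{-1})^{k}$ with $k\equiv -t_u \pmod n$, and move $(b^{-1})^k$ past $c^v$ one factor at a time using the defining relation for $\rho$. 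This gives $(b^{-1})^k c^v=c^{\rho^k(v)}b^{(\cdot)}$. Comparing $C$-parts in the unique factorisation $G=CB$ yields $\rho(u+v)=\rho(u)+\rho^{k}(v)$, with $k$ depending only on $u$.
\end{itemize}
This is exactly the skew morphism condition. No core-freeness of $B$ in $G$ is needed, since only the uniqueness of the $CB$ factorisation is used; this is the point where Hu--Jajcay's non-core-free viewpoint is implicit.

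Third step: transfer back to $\overline{\varphi}$. We have $\overline{\varphi}=\iota\rho\iota$, where $\iota\colon u\mapsto -u$ is an automorphism of $\ZZ_m$. Conjugating a skew morphism by an automorphism $\alpha$ gives a skew morphism with power function $\pi\circ\alpha^{-1}$. Hence $\overline{\varphi}$ is a skew morphism of $\ZZ_m$.

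The main obstacle is the bookkeeping in the second step. One has to check that pushing $(b^{-1})^k$ through $c^v$ really produces $\rho^k(v)$ in the $C$-coordinate, i.e.\ that the $B$-parts produced at each step do not interfere. This is an induction on $k$ using only the defining relation for $\rho$ and uniqueness of normal forms in $G=CB$.

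Alternatively, one could verify directly the cocycle identity
\[ \sigma_\varphi(r+s,1)=\sigma_\varphi(r,1)+\sigma_\varphi(s,\varphi^r(1)), \]
and then express $\sigma_\varphi(s,\varphi^r(1))$ as an iterate of $\overline{\varphi}$ applied to $s$. This is essentially the same computation as the second step, done in coordinates instead of inside $G$.
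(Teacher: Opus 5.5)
Your proof is correct. The paper does not prove this lemma itself; it quotes it from \cite{KovacsNedela2011} and uses it as a black box, so there is no in-text argument to compare against. What you give is the standard reverse-factorisation (reciprocal skew morphism) argument in $G=BC=CB$, and each step checks out:
\begin{enumerate}
\item $\rho$ is the action of $b^{-1}$ on the left cosets $c^uB$, so it is a permutation fixing $0$.
\item When you push $(b^{-1})^k$ through $c^v$, the $B$-parts accumulate on the right and do not interfere.
\item Only uniqueness of normal forms is used, not core-freeness of $B$, as you say. This is why $\rho$ may have order a proper divisor of $n$. That is harmless, since the definition of a skew morphism only asks for some exponent $k$ depending on $u$.
\end{enumerate}

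You can drop the conjugation by $\iota$ by working on the other side. In $G=BC$, right multiplication by $b$ permutes the right cosets $Bc^r$, $r\in\ZZ_m$. The identity $c^rb=\varphi^r(b)c^{\sigma_\varphi(r,1)}$ says precisely that $Bc^r\cdot b=Bc^{\overline{\varphi}(r)}$. This gives well-definedness on $\ZZ_m$ and bijectivity of $\overline{\varphi}$ at once. Iterating gives $\sigma_\varphi(r,j)=\overline{\varphi}^{\,j}(r)$, which is Lemma~\ref{lem:quo:a}. Then $c^{r+s}b=c^s\,b^{\varphi^r(1)}c^{\sigma_\varphi(r,1)}$ yields $\overline{\varphi}(r+s)=\overline{\varphi}(r)+\overline{\varphi}^{\,\varphi^r(1)}(s)$. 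So $\overline{\varphi}$ is a skew morphism with $\pi_{\overline{\varphi}}(r)\equiv\varphi^r(1)$ modulo $\ord(\overline{\varphi})$.

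This is the same idea as yours, but it identifies the power function of $\overline{\varphi}$ explicitly. It also shows what your coordinate alternative would still need beyond the cocycle identity: the identity $\sigma_\varphi(s,x+y)=\sigma_\varphi(\sigma_\varphi(s,x),y)$, to rewrite $\sigma_\varphi(s,\varphi^r(1))$ as an iterate of $\overline{\varphi}$, and a separate injectivity argument. The coset picture supplies both for free.
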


The skew morphism $\overline{\varphi}$ of $\ZZ_{\ord(\varphi)}$ described in Lemma~\ref{lem:derived} is called the skew morphism \emph{derived} from $\varphi$. The concept of the derived skew morphism was first introduced in~\cite{KovacsNedela2011}, and it was later studied in more detail in~\cite{Bachraty, BachratyHagara, FengHu, HuNedela}. Derived skew morphisms will play a key role in later sections, here we list some of their useful properties. 

\begin{lemma}[\cite{BachratyHagara}]\label{lem:quo}
Let $\varphi$ be a non-trivial skew morphism of $\ZZ_n$, and let $\overline{\varphi}$ be the skew morphism derived from $\varphi$. Then:
\begin{enumerate}[label={\rm (\alph*)},ref=\ref{lem:quo}(\alph*)]
\item\label{lem:quo:a} $\overline{\varphi}^{\, y}(x) = \sigma_{\varphi}(x,y)$; and
\item\label{lem:quo:b} $\pi_{\varphi}(i) = \overline{\varphi}^{\, i}(1)$, so in particular $\ord(\overline{\varphi})=n/|\ker\varphi|$. 
\end{enumerate}
\end{lemma}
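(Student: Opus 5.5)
The plan is to derive both parts from one additivity identity for $\sigma_\varphi$ in its second argument, namely
\[
\sigma_\varphi(r,x+y)=\sigma_\varphi(\sigma_\varphi(r,x),y) \quad \text{in } \ZZ_{\ord(\varphi)},
\]
for all $r\geq 0$ and $x,y\in\ZZ_n$. To prove it, expand $\varphi^r(x+y+z)$ in two ways using Lemma~\ref{lem:sigma:a}. Grouping as $x+(y+z)$ and applying the lemma twice gives
\[
\varphi^r(x)+\varphi^{\sigma_\varphi(r,x)}(y)+\varphi^{\sigma_\varphi(\sigma_\varphi(r,x),y)}(z).
\]
Grouping as $(x+y)+z$, and expanding $\varphi^r(x+y)$ by the same lemma, gives
\[
\varphi^r(x)+\varphi^{\sigma_\varphi(r,x)}(y)+\varphi^{\sigma_\varphi(r,x+y)}(z).
\]
Cancelling the common terms, $\varphi^{\sigma_\varphi(r,x+y)}(z)=\varphi^{\sigma_\varphi(\sigma_\varphi(r,x),y)}(z)$ for all $z$. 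Take $z=1$. By Theorem~\ref{thm:orderoforbit}, the orbit of $1$ under $\langle\varphi\rangle$ has length exactly $\ord(\varphi)$, so the two exponents are congruent modulo $\ord(\varphi)$. This uses that $\sigma_\varphi$ is well defined modulo $\ord(\varphi)$ in its first argument, which was recalled earlier.

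For part (a), I will induct on $y\geq 0$. Since $\pi_\varphi(0)=1$, we have $\sigma_\varphi(x,0)=x=\overline{\varphi}^{\,0}(x)$. The case $y=1$ is the definition of $\overline{\varphi}$ in Lemma~\ref{lem:derived}. For the step, suppose $\overline{\varphi}^{\,y}(x)=\sigma_\varphi(x,y)$. Then
\[
\overline{\varphi}^{\,y+1}(x)=\overline{\varphi}(\sigma_\varphi(x,y))=\sigma_\varphi(\sigma_\varphi(x,y),1)=\sigma_\varphi(x,y+1),
\]
where the last equality is the identity above with $r=x$, first argument $y$ and second argument $1$. Because $\varphi^n$ and $\overline{\varphi}^{\,n}$ are both trivial, $y$ may be read modulo $n$.

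For part (b), apply Lemma~\ref{lem:sigma:b} with first element $0$: since $\pi_\varphi(0)=1$, we get $\pi_\varphi(i)=\sigma_\varphi(1,i)$, which by (a) equals $\overline{\varphi}^{\,i}(1)$. For the order, recall that $\pi_\varphi(i)=\pi_\varphi(j)$ if and only if $i\equiv j \pmod{n/|\ker\varphi|}$. Hence the sequence $\overline{\varphi}^{\,i}(1)$ has least period $n/|\ker\varphi|$, and so the $\langle\overline{\varphi}\rangle$-orbit of $1$ in $\ZZ_{\ord(\varphi)}$ has exactly $n/|\ker\varphi|$ elements. Since $\overline{\varphi}$ is a skew morphism of a cyclic group (Lemma~\ref{lem:derived}), Theorem~\ref{thm:orderoforbit} gives $\ord(\overline{\varphi})=n/|\ker\varphi|$. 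The non-triviality of $\varphi$ ensures $\ord(\varphi)\geq 2$, so the power function takes values in the ring $\ZZ_{\ord(\varphi)}$ as intended.

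The main obstacle is the additivity identity, specifically getting the expansion order right and justifying that exponents can be compared modulo $\ord(\varphi)$ via the orbit of $1$. Once that identity holds, both parts follow formally.
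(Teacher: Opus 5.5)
Your proof is correct. The paper gives no proof of this lemma (it is quoted from \cite{BachratyHagara}), so there is nothing in the paper to compare against; your argument is a self-contained derivation from Lemma~\ref{lem:sigma:a}, Theorem~\ref{thm:orderoforbit} and Lemma~\ref{lem:derived}.

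The identity $\sigma_\varphi(r,x+y)=\sigma_\varphi(\sigma_\varphi(r,x),y)$ does the real work, and it contains Lemma~\ref{lem:sigma:b} as the case $r=1$. Reading the exponents off the orbit of $1$ is a clean substitute for the other standard route. That route works in the skew product group $G=\ZZ_n\langle c\rangle$: expand $c^r(x+y)$ in two ways and use uniqueness of the factorisation $xc^i$. Your version has the advantage of never leaving $\ZZ_n$. Part (b), including the orbit count via $\pi_\varphi(i)=\pi_\varphi(j)\iff i\equiv j \pmod{n/|\ker\varphi|}$, is fine.

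One sentence is wrong as stated, though it is not needed. $\varphi^n$ need not be trivial: $\ord(\varphi)$ divides $n\phi(n)$ but not necessarily $n$. For example, the automorphism $x\mapsto 2x$ of $\ZZ_3$ has order $2$, so its cube is itself. Also, $\overline{\varphi}^{\,n}=1$ has not been established at that point. Neither fact is required, because your induction already proves $\overline{\varphi}^{\,y}(x)=\sigma_\varphi(x,y \bmod n)$ for every integer $y\geq 0$. From this, $\overline{\varphi}^{\,n}=1$ follows as a consequence, since $\sigma_\varphi(x,n)=\sigma_\varphi(x,0)=x$. Simply delete that sentence.

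A smaller point: $\pi_\varphi(i)=\sigma_\varphi(1,i)$ holds directly from the definition of $\sigma_\varphi$, so appealing to Lemma~\ref{lem:sigma:b} there is harmless but unnecessary.
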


The following theorem shows that the quotient of a skew morphism $\varphi$ of a cyclic group can be used to determine which powers of $\varphi$ are skew morphisms as well. 

\begin{theorem}[\cite{BachratyHagara}]\label{thm:power}
Let $\varphi$ be a skew morphism of $\ZZ_n$, and let $r$ be a positive integer. Then $\varphi^r$ is a skew morphism of $\ZZ_n$ if and only if the subgroup of $\ZZ_{\ord(\varphi)}$ generated by $r$ is preserved by $\overline{\varphi}$ set-wise. 
\end{theorem}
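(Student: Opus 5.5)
By Lemma~\ref{lem:power}, being a skew morphism is a condition on the values $\sigma_\varphi(r,x)$. By Lemma~\ref{lem:quo:a}, those values are exactly the images of $r$ under powers of $\overline{\varphi}$. So the plan is to reduce the criterion to a statement about the $\langle\overline{\varphi}\rangle$-orbit of $r$ in $\ZZ_m$, where $m=\ord(\varphi)$, and then show that this orbit condition is equivalent to $\overline{\varphi}$ preserving $\langle r\rangle$. The trivial case $\varphi=\mathrm{id}$ gives $m=1$, and both sides of the equivalence hold trivially, so I assume $\varphi$ is non-trivial.

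\emph{Step 1.} By Lemma~\ref{lem:power}, $\varphi^r$ is a skew morphism if and only if for every $x\in\ZZ_n$ the element $\sigma_\varphi(r,x)$ is a multiple of $r$ in $\ZZ_m$. The set of such multiples $\{ra : a\in\ZZ_m\}$ is precisely the subgroup $\langle r\rangle$ of $\ZZ_m$. So the condition reads: $\sigma_\varphi(r,x)\in\langle r\rangle$ for all $x\in\ZZ_n$.

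\emph{Step 2.} By Lemma~\ref{lem:quo:a}, $\sigma_\varphi(r,x)=\overline{\varphi}^{\,x}(r)$. By Lemma~\ref{lem:quo:b}, $\ord(\overline{\varphi})=n/|\ker\varphi|$ divides $n$. Hence as $x$ runs over $\{0,1,\dots,n-1\}$, the exponent $x$ hits every residue modulo $\ord(\overline{\varphi})$, so the powers $\overline{\varphi}^{\,x}$ exhaust $\langle\overline{\varphi}\rangle$. The criterion therefore becomes: the whole $\langle\overline{\varphi}\rangle$-orbit of $r$ lies in $\langle r\rangle$.

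\emph{Step 3.} I then prove that this orbit condition holds if and only if $\overline{\varphi}(\langle r\rangle)=\langle r\rangle$.
\begin{itemize}
\item If $\overline{\varphi}$ preserves $\langle r\rangle$, then so do all its powers, and in particular every $\overline{\varphi}^{\,j}(r)$ lies in $\langle r\rangle$.
\item Conversely, suppose $\overline{\varphi}^{\,j}(r)\in\langle r\rangle$ for all $j$. I show by induction on $k\ge 1$ that $\overline{\varphi}^{\,j}(kr)\in\langle r\rangle$ for all $j$ simultaneously. The case $k=1$ is the hypothesis. For the inductive step, Lemma~\ref{lem:sigma:a} applied to the skew morphism $\overline{\varphi}$ gives
\[\overline{\varphi}^{\,j}(kr)=\overline{\varphi}^{\,j}(r)+\overline{\varphi}^{\,\sigma_{\overline{\varphi}}(j,r)}((k-1)r).\]
Both summands lie in $\langle r\rangle$: the first by the hypothesis, the second by the induction hypothesis applied at the exponent $\sigma_{\overline{\varphi}}(j,r)$. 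Taking $j=1$ yields $\overline{\varphi}(\langle r\rangle)\subseteq\langle r\rangle$. Since $\overline{\varphi}$ is a bijection and $\langle r\rangle$ is finite, this inclusion is an equality.
\end{itemize}

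The main point of care is the converse in Step 3. The hypothesis only controls the orbit of the single element $r$, so the induction must be carried out for all exponents $j$ at once. This is what lets the twisted exponent $\sigma_{\overline{\varphi}}(j,r)$, which is not under our control, be absorbed by the induction hypothesis. Step 2, which turns the condition over all $x$ into a condition over all powers of $\overline{\varphi}$, is what makes this possible.
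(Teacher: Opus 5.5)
Your proof is correct. The paper gives no proof of its own here, since the result is quoted from \cite{BachratyHagara}; your reduction via Lemma~\ref{lem:power} and the identity $\sigma_\varphi(r,x)=\overline{\varphi}^{\,x}(r)$ from Lemma~\ref{lem:quo:a}, to the statement that the $\langle\overline{\varphi}\rangle$-orbit of $r$ lies in $\langle r\rangle$, is the natural route.

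One small simplification is available in the converse of Step~3. Split $kr=(k-1)r+r$ rather than $r+(k-1)r$, which gives $\overline{\varphi}(kr)=\overline{\varphi}((k-1)r)+\overline{\varphi}^{\,\pi_{\overline{\varphi}}((k-1)r)}(r)$. The twisted power now acts on $r$ itself, whose whole orbit is controlled by hypothesis. A plain induction on $k$ with $j=1$ then suffices, and you need not carry all exponents $j$ simultaneously.
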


\section{Properties of skew morphisms of cyclic $2$-groups}\label{sec:prop}

In this section we provide some useful properties about skew morphisms of cyclic $2$-groups. We start with the following lemma.

\begin{lemma}\label{lem:2basic}
Let $\varphi$ be a skew morphism of $\ZZ_{2^e}$. Then:
\begin{enumerate}[label={\rm (\alph*)},ref=\ref{lem:2basic}(\alph*)]
\item\label{lem:2basic:a} the order of $\varphi$ is a power of $2$;
\item\label{lem:2basic:b} $\varphi$ taken modulo $2^i$ is a well-defined skew morphism of $\ZZ_{2^i}$ for each $i\in\{0,\dots,e\}$;
 \item\label{lem:2basic:c} every subgroup of $\ZZ_{2^e}$ is preserved by $\varphi$ set-wise; and
\item\label{lem:2basic:d} every power of $\varphi$ is a skew morphism of $\ZZ_{2^e}$.
\end{enumerate}
\end{lemma}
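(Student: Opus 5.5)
Parts (a), (c), (d) follow from the cited results, and (b) follows from (c) by a short coset argument; I would prove them in the order (a), (c), (b), (d).

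For (a), Theorem~\ref{thm:ordercyclic} gives that $\ord(\varphi)$ divides $n\phi(n)=2^e\cdot 2^{e-1}$, so it is a power of $2$ (trivially so when $e=0$).

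For (c), I would first show that each subgroup $H=\langle 2^i\rangle$ is invariant, by induction on the subgroup lattice from the top. The kernel $\ker\varphi$ is a subgroup and is preserved by $\varphi$, as recalled in Section~\ref{sec:preli}, since $\ZZ_{2^e}$ is abelian. I expect the cleanest route to go through the orbit structure instead. Every $\langle\varphi\rangle$-orbit has size dividing $\ord(\varphi)$, a power of $2$, and $\varphi$ permutes the set of generators of $\ZZ_{2^e}$ only if that set is invariant. So the key claim is that $\varphi$ maps the odd elements (the generators) to odd elements; then the even elements $\langle 2\rangle$ form an invariant set.

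To prove this claim I would use Theorem~\ref{thm:orderoforbit}. The orbit $T$ of $1$ generates $\ZZ_{2^e}$ and has size $\ord(\varphi)$. If $T$ contained an even element, I would argue that this leads to a contradiction by counting, using that $\varphi(1)$ odd$\ldots$ This is the step I expect to be the main obstacle. An alternative I would try first is Lemma~\ref{lem:skewfromorbit} together with the derived skew morphism: the formula $\varphi(x)=\sum_{j<x}\varphi^{\pi_\varphi(j)}(1)$ expresses $\varphi(x)$ as a sum of $x$ elements of $T$. If all elements of $T$ are odd, then $\varphi(x)\equiv x \pmod 2$, so parity is preserved, and hence so is $\langle 2\rangle$. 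To show $T$ consists of odd elements, I would use the fact that $\varphi$ restricted to $\ker\varphi$ is an automorphism, combined with a minimal-counterexample induction on $e$.

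Once $\langle 2\rangle$ is invariant, I would iterate the argument. Because $\varphi(x+y)=\varphi(x)+\varphi^{i_x}(y)$, invariance of $\langle 2^i\rangle$ makes $\varphi$ compatible with the cosets of $\langle 2^i\rangle$. That is, $x\equiv y\pmod{2^i}$ implies $\varphi(x)\equiv\varphi(y)\pmod{2^i}$: write $y=x+h$ with $h\in\langle 2^i\rangle$, so that $\varphi(y)=\varphi(x)+\varphi^{i_x}(h)$, where the last term lies in $\langle 2^i\rangle$. This compatibility gives (b) directly. The reduction modulo $2^i$ is then a skew morphism of $\ZZ_{2^i}$, as noted in Section~\ref{sec:preli}, and I would apply the parity argument to it to get invariance of $\langle 2^{i+1}\rangle$ by induction.

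For (d), I would apply Theorem~\ref{thm:power} to the derived skew morphism $\overline{\varphi}$ of $\ZZ_{\ord(\varphi)}$, which is a cyclic $2$-group by (a). Part (c) applied to $\overline{\varphi}$ shows that every subgroup $\langle r\rangle$ of $\ZZ_{\ord(\varphi)}$ is preserved. Hence, by Theorem~\ref{thm:power}, every power $\varphi^r$ is a skew morphism.
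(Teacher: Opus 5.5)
\textbf{There is a genuine gap in part (c).} Parts (a) and (d) match the paper. Your observation that invariance of $\langle 2^i\rangle$ makes $\varphi$ compatible with the cosets of $\langle 2^i\rangle$, and hence gives the reduction modulo $2^i$, is also correct.

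The gap is that your whole plan rests on the claim that $\varphi$ maps odd elements to odd elements, and you never prove it. Your three suggestions do not close it:
\begin{itemize}
\item The counting idea presupposes that $\varphi(1)$ is odd, which is part of what must be shown.
\item The route via Lemma~\ref{lem:skewfromorbit} only restates the claim as ``the orbit of $1$ consists of odd elements'', which is equally unproved.
\item ``Use that $\varphi\vert_{\ker\varphi}$ is an automorphism plus a minimal-counterexample induction'' names ingredients but gives no argument.
\end{itemize}

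The missing idea is to work with the \emph{smallest} non-trivial subgroup rather than with $\langle 2\rangle$. Since $\ker\varphi$ is non-trivial \cite{ConderJajcayTucker2016}, it contains $K=2^{e-1}$. Because $\varphi$ restricts to an automorphism of $\ker\varphi$ and $K$ is its unique element of order $2$, we get $\varphi(K)=K$. Since $\pi_\varphi(K)=1$, it follows that $\varphi(x+K)=\varphi(K+x)=\varphi(K)+\varphi(x)=\varphi(x)+K$. So $\varphi$ descends to a skew morphism of $\ZZ_{2^{e-1}}$, and iterating gives (b) directly.

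Then (c) is immediate: $\langle 2^i\rangle$ is the preimage of $0$ under reduction modulo $2^i$, and the reduced skew morphism fixes $0$. Parity preservation is just the case $i=1$. So the workable order is (b) $\Rightarrow$ (c), not (c) $\Rightarrow$ (b) as you planned.

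\textbf{Your inductive step is also misdirected.} This holds even if parity preservation were known for every cyclic $2$-group. Applying the parity argument to $\varphi$ taken modulo $2^i$ only shows again that $\langle 2\rangle$ is invariant. It says nothing about $\langle 2^{i+1}\rangle$, which is not the preimage of any subgroup of $\ZZ_{2^i}$ under reduction. To go further down, you must \emph{restrict} $\varphi$ to the invariant subgroup $\langle 2^i\rangle\cong\ZZ_{2^{e-i}}$; this restriction is a skew morphism of that subgroup. Applying parity there works, because the ``even'' elements of $\langle 2^i\rangle$ are exactly $\langle 2^{i+1}\rangle$.
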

\begin{proof}
By Theorem~\ref{thm:ordercyclic} we know that $\ord(\varphi)$ divides $2^e\phi(2^e)$, which is a power of $2$, and so (a) follows. We prove (b) by induction. The claim is clearly true for $i=e$, and so we may assume that $i\leq e-1$. Let $K=2^{e-1}$ and note that $K$ is contained in every non-trivial subgroup of $\ZZ_{2^e}$. Since $\ker\varphi$ is non-trivial, it follows that $K\in \ker\varphi$. Recall that the restriction of $\varphi$ to $\ker\varphi$ is an automorphism of $\ker\varphi$, and since $K$ is the only non-zero element of $\ker\varphi$ such that $K+K=0$, we deduce that $\varphi(K)=K$. Hence for every $x\in \ZZ_{2^e}$ we have $\varphi(x+K)=\varphi(K+x)=\varphi(K)+\varphi(x)=\varphi(x)+\varphi(K)=\varphi(x)+K$. It follows, that $\varphi$ taken modulo $K$ is a well-defined permutation of $\ZZ_{2^{e-1}}$ and consequently a skew morphism of $\ZZ_{2^{e-1}}$. Denote this skew morphism by $\varphi'$. Since $i\leq e-1$, we may now apply the inductive hypothesis to deduce that $\varphi'$ taken modulo $2^i$ is a well-defined skew morphism of $\ZZ_{2^i}$. Note that $\varphi$ taken modulo $2^i$ and $\varphi'$ taken modulo $2^i$ are clearly the same, and so (b) follows. Next, we prove (c). Let $H$ be an arbitrary subgroup of $\ZZ_{2^e}$, and let $2^i$ be the smallest non-trivial element of $H$. By (b) it follows that $\varphi$ taken modulo $2^i$ is a skew morphism of $\ZZ_{2^i}$. In particular, since every skew morphism fixes the identity, we deduce that the set of all elements of $\ZZ_{2^e}$ that are divisible by $2^i$ must be preserved set-wise. But these are exactly the elements of $H$, which proves (c). Finally, to prove (d) first note that by (a) we know that $\ord(\varphi)$ is a power of $2$. Recall that the skew morphism $\overline{\varphi}$ derived from $\varphi$ is a skew morphism of $\ZZ_{\ord(\varphi)}$. This group is a cyclic $2$-group, and so by (c) we deduce that every subgroup of $\ZZ_{\ord(\varphi)}$ is preserved by $\overline{\varphi}$ set-wise. The rest follows easily by Theorem~\ref{thm:power}.    
\end{proof}

Next, we show that the power function of a skew morphism of a cyclic $2$-group must have a specific structure when taken modulo $2$ and modulo $4$.

\begin{lemma}\label{lem:2power}
Let $\varphi$ be a skew morphism of $\ZZ_{2^e}$ with power function $\pi_\varphi$. Then for each $x\in \ZZ_{2^e}$ we have $\pi_\varphi(x)\equiv 1 \pmod{2}$. Moreover, if $x$ is even, then the values $\pi_\varphi(x)$ are all congruent to $1$ modulo $4$ and if $x$ is odd, then the values $\pi_\varphi(x)$ are either all congruent to $1$ modulo $4$ or all congruent to $3$ modulo $4$.  
\end{lemma}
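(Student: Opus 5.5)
The plan is to read the power function off the derived skew morphism and reduce it modulo $2$ and $4$. By Lemma~\ref{lem:quo:b}, $\pi_\varphi(x)=\overline{\varphi}^{\,x}(1)$ for each $x$. Here $\overline{\varphi}$ is the skew morphism of $\ZZ_{\ord(\varphi)}$ derived from $\varphi$ (Lemma~\ref{lem:derived}). So the claim becomes a statement about the $\langle\overline{\varphi}\rangle$-orbit of $1$, taken modulo $2$ and $4$.

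First I dispose of the degenerate cases. If $\ord(\varphi)\le 2$, then $\varphi$ is the identity or an automorphism, so $\pi_\varphi\equiv 1$ and every part of the statement holds trivially. So assume $\ord(\varphi)\ge 3$. By Lemma~\ref{lem:2basic:a}, $\ord(\varphi)=2^k$ with $k\ge 2$, so $\overline{\varphi}$ is a skew morphism of the cyclic $2$-group $\ZZ_{2^k}$ and reductions modulo $2$ and $4$ make sense in $\ZZ_{2^k}$. By Lemma~\ref{lem:2basic:b}, applied to $\overline{\varphi}$ and $\ZZ_{2^k}$, the reductions of $\overline{\varphi}$ modulo $2$ and modulo $4$ are well-defined skew morphisms $\rho_2$ of $\ZZ_2$ and $\rho_4$ of $\ZZ_4$.

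\emph{Parity.} The map $\rho_2$ fixes $0$, so it also fixes $1$. Hence $\overline{\varphi}^{\,x}(1)\equiv 1 \pmod 2$ for all $x$, which gives $\pi_\varphi(x)\equiv 1\pmod 2$.

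\emph{Modulo $4$.} By Theorem~\ref{thm:orderofskew}, $\ord(\rho_4)<4$, and by Lemma~\ref{lem:2basic:a} it is a power of $2$, so $\ord(\rho_4)\in\{1,2\}$. As noted in Section~\ref{sec:preli}, $\rho_4$ is then an automorphism of $\ZZ_4$. Thus $\rho_4$ is either the identity or $y\mapsto -y$. Consequently
\[
\pi_\varphi(x)\equiv\rho_4^{\,x}(1)\equiv \epsilon^{x}\pmod 4 \quad\text{for some fixed } \epsilon\in\{1,-1\}.
\]
This depends only on the parity of $x$ (note that $\overline{\varphi}^{\,x}$ is taken with $x$ read modulo $\ord(\overline{\varphi})$, which is a power of $2$ dividing $2^k$, so parity is well defined). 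For even $x$ this gives $\pi_\varphi(x)\equiv 1\pmod 4$. For odd $x$ it gives $\pi_\varphi(x)\equiv\epsilon\pmod 4$, which is the same value for all odd $x$.

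The only delicate point is making sure the reductions are legitimate. This needs $\ord(\varphi)\ge 4$ so that residues mod $4$ in $\ZZ_{\ord(\varphi)}$ are meaningful, which is why the cases $\ord(\varphi)\le 2$ are handled separately. It also needs the parity of the exponent $x$ to be well defined modulo $\ord(\overline{\varphi})$, which holds because $\ord(\overline{\varphi})$ is a power of $2$, hence even (at least $2$) whenever $\overline{\varphi}$ is non-trivial; when $\overline{\varphi}$ is trivial, $\pi_\varphi\equiv 1$ anyway.
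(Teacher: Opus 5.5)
Your proposal is correct and follows essentially the same route as the paper. Both arguments read $\pi_\varphi(x)=\overline{\varphi}^{\,x}(1)$ off the derived skew morphism (Lemma~\ref{lem:quo:b}), reduce $\overline{\varphi}$ modulo $2$ and $4$ via Lemma~\ref{lem:2basic:b}, and use that the only skew morphisms of $\ZZ_2$ and $\ZZ_4$ are the identity and $y\mapsto -y$. Your separate handling of $\ord(\varphi)\le 2$ and your derivation that skew morphisms of $\ZZ_4$ are automorphisms simply make explicit what the paper leaves implicit.
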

\begin{proof}
Let $k$ denote the smallest non-trivial element of $\ker\varphi$, and let $\OO_1$ denote the orbit of $\langle \overline{\varphi} \rangle$ that contains $1$. Note that Theorem~\ref{thm:orderoforbit} combined with the second part of Lemma~\ref{lem:quo:b} implies that the length of $\OO_1$ is $k$. Also recall that $\pi_\varphi(x)=\pi_\varphi(y)$ if and only if $x\equiv y \pmod{k}$, and so the power function of $\varphi$ is determined by the values $\pi_\varphi(0),\ \pi_\varphi(1),\ \dots,\ \pi_\varphi(k-1)$. By Lemma~\ref{lem:quo:b} we find that these values can be identified with $\OO_1$. By Lemma~\ref{lem:2basic:a} we know that $\ord(\varphi)$ is a power of two, and it follows that $\overline{\varphi}$ is a skew morphism of a cyclic $2$-group. Then Lemma~\ref{lem:2basic:b} implies that $\overline{\varphi}$ taken modulo $2$ (or $4$) is a skew morphism of $\ZZ_2$ (or $\ZZ_4$). Since the only skew morphism of $\ZZ_2$ is the identity permutation, we deduce that $\OO_1$ taken modulo $2$ is $1$-cycle $(1)$, and it follows that $\pi_\varphi(x)$ is always odd. The second part follows from the fact that the only skew morphisms of $\ZZ_4$ are the identity and $(1,3)$, and hence $\OO_1$ taken modulo $4$ is either a $1$-cycle $(1)$ or a $2$-cycle $(1,3)$.  
\end{proof}

The following lemma shows that if a skew morphism of a cyclic $2$-group has order at least $4$, then the orbit of its generator must have a specific structure.

\begin{lemma}\label{lem:half1}
Let $\varphi$ be a skew morphism of $\ZZ_{2^e}$ of order at least $4$. Then $\varphi^{\ord(\varphi)/2}(1)=2^{e-1}+1$. In particular, $\varphi$ taken modulo $2^{e-1}$ has order $\ord(\varphi)/2$. 
\end{lemma}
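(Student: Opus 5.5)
The plan is to study the involutory power $\psi=\varphi^{m/2}$, where $m=\ord(\varphi)$. Show first that $\psi$ is an automorphism, then pin down which automorphism it is by reducing modulo $4$.

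\textbf{Step 1: $\psi$ is a non-trivial automorphism.} By Lemma~\ref{lem:2basic:a}, $m$ is a power of $2$, and since $m\geq 4$ the integer $m/2$ is even. By Lemma~\ref{lem:2basic:d}, $\psi$ is a skew morphism of $\ZZ_{2^e}$. It has order exactly $2$, so by the remark in Section~\ref{sec:preli} (every skew morphism of order $2$ has trivial power function) it is an automorphism. Hence $\psi(x)=ax$ for some unit $a$ with $a^2\equiv 1 \pmod{2^e}$ and $a\neq 1$.

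\textbf{Step 2: identify $a$.} By Theorem~\ref{thm:orderofskew} we have $4\leq m<2^e$, so $e\geq 3$. The non-trivial square roots of $1$ modulo $2^e$ are then $-1$, $2^{e-1}-1$ and $2^{e-1}+1$. To exclude the first two, I reduce modulo $4$. By Lemma~\ref{lem:2basic:b}, $\varphi$ taken modulo $4$ is a skew morphism of $\ZZ_4$, so it is either the identity or $(1,3)$. In both cases its order divides $2$, and hence it divides the even number $m/2$. Therefore
\[
a=\varphi^{m/2}(1)\equiv 1 \pmod 4 .
\]
Since $-1\equiv 2^{e-1}-1\equiv 3 \pmod 4$ for $e\geq 3$, this forces $a=2^{e-1}+1$. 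In other words, $\varphi^{m/2}(1)=2^{e-1}+1$, as claimed.

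\textbf{Step 3: the order modulo $2^{e-1}$.} Let $\varphi'$ denote $\varphi$ taken modulo $2^{e-1}$, which is well defined by Lemma~\ref{lem:2basic:b}. By Theorem~\ref{thm:orderoforbit}, $\ord(\varphi')$ is the least $j>0$ with $\varphi^j(1)\equiv 1 \pmod{2^{e-1}}$, that is, with $\varphi^j(1)\in\{1,\,2^{e-1}+1\}$.

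By the same theorem, the orbit of $1$ under $\langle\varphi\rangle$ has length exactly $m$, so its elements $\varphi^j(1)$ for $0\leq j<m$ are pairwise distinct. Hence $\varphi^j(1)=1$ forces $j\equiv 0 \pmod m$. Likewise $\varphi^j(1)=2^{e-1}+1=\varphi^{m/2}(1)$ forces $j\equiv m/2 \pmod m$. The least positive such $j$ is therefore $m/2$, so $\ord(\varphi')=m/2$.

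The main obstacle I expected was distinguishing among the three non-trivial involutory automorphisms. The reduction modulo $4$, combined with the evenness of $m/2$, resolves this cleanly, and this is exactly where the hypothesis $\ord(\varphi)\geq 4$ is used.
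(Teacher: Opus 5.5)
Your proof is correct and follows essentially the same route as the paper: you show that $\varphi^{\ord(\varphi)/2}$ is an automorphism of order $2$, list the square roots of unity modulo $2^e$, and rule out $2^{e-1}-1$ and $2^e-1$ by reducing modulo $4$. The differences are cosmetic. You reduce $\varphi$ itself modulo $4$ and use that $\ord(\varphi)/2$ is even, whereas the paper reduces $\varphi^{\ord(\varphi)/4}$ modulo $4$ and squares. You also derive the ``in particular'' claim explicitly from Theorem~\ref{thm:orderoforbit}, which the paper leaves implicit.
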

\begin{proof}
By Lemma~\ref{lem:2basic:d} we know that both $\varphi^{\ord(\varphi)/2}$ and $\varphi^{\ord(\varphi)/4}$ are skew morphisms of $\ZZ_{2^e}$. Denote these skew morphisms by $\varphi_2$ and $\varphi_4$, respectively, and note that $\ord(\varphi_2)=2$ and $\ord(\varphi_4)=4$. Since there are no proper skew morphisms of order $2$, we deduce that $\varphi_2$ is an automorphism, and it follows that $(\varphi_2(1))^2\equiv 1 \pmod{2^e}$. It follows that $\varphi_2(1)$ is equal to either $2^{e-1}-1$, $2^{e-1}+1$, or $2^{e}-1$. We will show that we must have $\varphi_2(1)=2^{e-1}+1$, which will prove our claim.

First, by Lemma~\ref{lem:2basic:b} we know that $\varphi_4$ taken modulo $4$ is a skew morphism of $\ZZ_4$, and since the only skew morphisms of $\ZZ_4$ are the identity and $(1,3)$, we deduce that $\varphi_4^{\, 2}(1)\equiv 1 \pmod{4}$. Clearly $\varphi_2(1)=\varphi_4^{\, 2}(1)$, and so also $\varphi_2(1)$ is congruent to $1$ modulo $4$. By Theorem~\ref{thm:orderofskew} we know that $2^e$ must be strictly larger than the order of $\varphi$, and so $2^e$ is at least $8$. Consequently, we deduce that both $2^{e-1}$ and $2^{e}$ are divisible by $4$. It follows that $2^{e-1}-1\equiv 2^{e}-1\equiv 3\pmod{4}$, and hence $\varphi_2(1)$ must be equal to $2^{e-1}+1$.  
\end{proof}

We remark that the proof of Lemma~\ref{lem:half1} can be reproduced for any generator of $\ZZ_{2^e}$, that is, for any odd element of $\ZZ_{2^e}$. The section concludes with the following observation.

\begin{lemma}\label{lem:half2}
Let $\varphi$ be a skew morphism of $\ZZ_{2^e}$ of order at least $4$. Then the length of the orbit of $\langle \varphi \rangle$ that contains $2$ is strictly smaller than $\ord(\varphi)$. In particular, every orbit of $\langle \varphi \rangle$ containing an even element has length strictly smaller than $\ord(\varphi)$.
\end{lemma}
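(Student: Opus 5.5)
We need to show the orbit of $2$ under $\langle\varphi\rangle$ has length less than $\ord(\varphi)=n$, where $n\ge 4$ is a power of $2$. Since orbit lengths divide $n$, it suffices to show $\varphi^{n/2}(2)=2$. Write $\varphi_2=\varphi^{n/2}$. By Lemma~\ref{lem:2basic:d}, $\varphi_2$ is a skew morphism of order $2$, hence an automorphism. By Lemma~\ref{lem:half1}, $\varphi_2(1)=2^{e-1}+1$, so $\varphi_2(x)=(2^{e-1}+1)x$ for all $x$. In particular
\[
\varphi_2(2)=2^e+2\equiv 2 \pmod{2^e},
\]
and more generally $\varphi_2(x)=x+2^{e-1}x=x$ for every even $x$. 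Thus every even element is fixed by $\varphi^{n/2}$.

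The orbit of any even element $x$ therefore has length dividing $n/2$, which is strictly smaller than $\ord(\varphi)$. This gives both the claim for $2$ and the ``in particular'' statement simultaneously.

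The only real point is the identification $\varphi^{\ord(\varphi)/2}(1)=2^{e-1}+1$, which is supplied by Lemma~\ref{lem:half1}. Combined with the fact that an order-$2$ skew morphism is an automorphism of the cyclic group, and hence multiplication by the image of $1$, the rest is immediate. I do not expect any obstacle beyond checking that $\ord(\varphi)/2$ is an integer with $\varphi^{\ord(\varphi)/2}$ of order exactly $2$. This follows from Lemma~\ref{lem:2basic:a}, since $\ord(\varphi)$ is a power of $2$ and at least $4$.
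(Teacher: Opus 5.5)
Your proof is correct, and it takes a shorter route than the paper's.

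\textbf{The paper's route.} It uses only the value $\varphi^{\ord(\varphi)/2}(1)=2^{e-1}+1$ from Lemma~\ref{lem:half1}. It expands $\varphi^{\ord(\varphi)/2}(1+1)$ via Lemma~\ref{lem:sigma:a} and identifies $\sigma_\varphi(\ord(\varphi)/2,1)$ with $\overline{\varphi}(\ord(\varphi)/2)$ by Lemma~\ref{lem:quo:a}. It then applies Lemma~\ref{lem:2basic:c} to the derived skew morphism to force this value to be $\ord(\varphi)/2$, which gives $\varphi^{\ord(\varphi)/2}(2)=2$. The claim about all even elements is proved separately, by restricting $\varphi$ to $\langle 2\rangle$ and invoking Theorem~\ref{thm:orderoforbit}.

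\textbf{Your route.} You observe, as the paper itself does inside the proof of Lemma~\ref{lem:half1}, that $\varphi^{\ord(\varphi)/2}$ is a skew morphism of order $2$. It is therefore an automorphism, namely multiplication by $2^{e-1}+1$, and so it fixes $\langle 2\rangle$ pointwise.

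\textbf{What each buys.} Your argument bypasses the derived skew morphism and Theorem~\ref{thm:orderoforbit} entirely, and it proves both assertions at once. It also gives the slightly sharper form that every orbit through an even element has length dividing $\ord(\varphi)/2$. This is the form actually used later, for example in Lemma~\ref{lem:four}. The paper's key identity $\sigma_\varphi(\ord(\varphi)/2,1)\equiv\ord(\varphi)/2$ is equivalent to $\varphi^{\ord(\varphi)/2}$ satisfying the homomorphism property at $x=1$. Your automorphism observation supplies that directly, so the paper's computation gains nothing here beyond exercising the $\sigma$ and derived-skew-morphism machinery it relies on later.
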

\begin{proof}
By Lemma~\ref{lem:half1} we know that $\varphi^{\ord(\varphi)/2}(1)=2^{e-1}+1$. Moreover, by Lemma~\ref{lem:quo:a} we deduce that $\sigma_\varphi(\ord(\varphi)/2,1)=\overline{\varphi}(\ord(\varphi)/2)$, and since $\overline{\varphi}$ is a skew morphism of $\ZZ_{\ord(\varphi)}$, it follows from Lemma~\ref{lem:2basic:c} that $\overline{\varphi}(\ord(\varphi)/2)\in \{0, \ord(\varphi)/2\}$. Since $\overline{\varphi}(0)=0$, it follows that $\overline{\varphi}(\ord(\varphi)/2)=\ord(\varphi)/2$, and so $\varphi^{\ord(\varphi)/2}(2) = \varphi^{\ord(\varphi)/2}(1+1) = \varphi^{\ord(\varphi)/2}(1) + \varphi^{\sigma_\varphi(\ord(\varphi)/2,1)}(1) = \varphi^{\ord(\varphi)/2}(1) + \varphi^{\ord(\varphi)/2}(1) =2^{e-1}+1 + 2^{e-1}+1 = 2$. Hence the length of the orbit of $\langle \varphi \rangle$ that contains $2$ is at most $\ord(\varphi)/2$, which proves the first part of the assertion. To prove the second assertion first note that by Lemma~\ref{lem:2basic:c} we know that the restriction of $\varphi$ to $\langle 2 \rangle$ is a skew morphism of $\langle 2 \rangle$. Theorem~\ref{thm:orderoforbit} implies that the order of $\varphi\vert_{\langle 2\rangle}$ is equal to the length of the orbit of $\langle \varphi \rangle$ that contains $2$, which was shown to be strictly smaller than $\ord(\varphi)$. The second part of the assertion now follows easily as every orbit of $\langle \varphi \rangle$ that contains an even element is also an orbit of $\langle\varphi\vert_{\langle 2 \rangle}\rangle$.  
\end{proof}

\section{Proof of Theorem~\ref{thm:main}: Small cases}\label{sec:small}

We will prove Theorem~\ref{thm:main} by showing that for each skew morphism $\varphi'$ of $\ZZ_{2^{e-1}}$ there are exactly four skew morphisms of $\ZZ_{2^e}$ that are equal to $\varphi'$ when taken modulo $2^{e-1}$, the only exceptions being the identity permutation and the automorphism of $\ZZ_{2^{e-1}}$ (of order $2$) given by multiplication by $2^{e-2}-1$. In these two cases there are exactly two skew morphisms of $\ZZ_{2^e}$ that are equal to $\varphi'$ when taken modulo $2^{e-1}$. In this section we address some of the small cases, namely we look at all candidates for $\varphi'$ such that $\ord(\varphi')$ is at most $2$. We start with the following well known fact about square roots of unity modulo a power of two. 

\begin{lemma}[\cite{IrelandRosen}]\label{lem:roots}
Let $e$ be a positive integer. If $e=1$, then the only square root of unity modulo $2^e$ is $1$. If $e=2$, then the only square roots of unity modulo $2^e$ are $1$ and $3$. If $e\geq 3$, then the square roots of unity modulo $2^e$ are exactly $1$, $2^{e-1}-1$, $2^{e-1}+1$, and $2^e-1$. 
\end{lemma}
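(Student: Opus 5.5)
This is the classical description of the solutions of $x^2\equiv 1 \pmod{2^e}$. The plan is to handle $e\le 2$ by inspection and then to reduce the general case to a factorisation of $(x-1)(x+1)$.

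For $e=1$ the only unit is $1$. For $e=2$ the units are $1$ and $3$, and both square to $1$ modulo $4$. These two cases are finished by direct checking.

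Now let $e\geq 3$. First check that the four listed elements are indeed square roots of unity:
\[(2^{e-1}\pm 1)^2 = 2^{2e-2}\pm 2^e + 1 \equiv 1 \pmod{2^e},\]
since $2e-2\geq e$ when $e\ge 2$. Also $(-1)^2=1$. These four residues are pairwise distinct modulo $2^e$ because $e\geq 3$.

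Conversely, suppose $x^2\equiv 1\pmod{2^e}$. Then $x$ is odd, and $2^e$ divides $(x-1)(x+1)$. The two factors $x-1$ and $x+1$ are both even and differ by $2$. Hence one of them is exactly divisible by $2$, i.e.\ it is $\equiv 2 \pmod 4$. The other factor must therefore be divisible by $2^{e-1}$. So either $x\equiv 1$ or $x\equiv -1 \pmod{2^{e-1}}$.

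Lifting these two congruences to $\ZZ_{2^e}$ gives exactly the residues $1,\ 2^{e-1}+1$ in the first case and $2^{e-1}-1,\ 2^e-1$ in the second. This yields the claimed list of four.

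There is no real obstacle here. The only point needing care is the parity argument that exactly one factor is $2\pmod 4$, together with the distinctness of the four residues, which uses $e\geq 3$.
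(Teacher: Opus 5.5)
Your proof is correct and complete. The four candidates check out, and they are distinct for $e\geq 3$. The valuation argument is sound: $x-1$ and $x+1$ are even and differ by $2$, so one is $\equiv 2 \pmod 4$ and the other must carry $2^{e-1}$. Lifting $x\equiv \pm 1 \pmod{2^{e-1}}$ to $\ZZ_{2^e}$ gives exactly the listed residues.

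The paper gives no proof of its own; it simply cites Ireland and Rosen. The standard textbook route there goes through the structure of the unit group: for $e\geq 3$ the units modulo $2^e$ are $\langle -1\rangle\times\langle 5\rangle\cong \ZZ_2\times\ZZ_{2^{e-2}}$. This group has exactly four elements of order dividing $2$, namely $\pm 1$ and $\pm 5^{2^{e-3}}$, and a short induction shows $5^{2^{e-3}}\equiv 2^{e-1}+1 \pmod{2^e}$.

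That approach gives the count of four immediately, but it relies on the structure theorem plus a computation to identify the elements explicitly. Your factorisation of $(x-1)(x+1)$ is fully elementary and self-contained, and it produces the explicit list directly. The explicit residues are all the paper actually uses later.
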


Next, we will describe all skew morphisms of $\ZZ_{2^e}$ that have order $4$.

\begin{lemma}\label{lem:skeworder4}
Let $e$ be a positive integer such that $e\geq 4$. Then $\ZZ_{2^e}$ admits exactly four automorphisms of order $4$ and four proper skew morphisms of order $4$. The automorphisms are given by multiplication by $2^{e-2}-1$, $2^{e-2}+1$, $2^{e-1}+ 2^{e-2}-1$, and $2^{e-1}+ 2^{e-2}+1$. For each non-negative integer $k$ the proper skew morphisms $\varphi_1$, $\varphi_2$, $\varphi_3$, and $\varphi_4$ are given by:
\begin{align*}
&\varphi_1(x)=\begin{cases}1+2k+2^{e-2} &\text{ if } x=1+2k,\\  2k &\text{ if } x=2k, \end{cases} \\
&\varphi_2(x)=\begin{cases}1+2k+2^{e-2}+2^{e-1} &\text{ if } x=1+2k,\\  2k &\text{ if } x=2k, \end{cases} \\
&\varphi_3(x)=\begin{cases}1+(2^{e-1}-2)(1+k) &\text{ if } x=1+2k,\\  (2^{e-1}-2)k &\text{ if } x=2k, \end{cases} \\
&\varphi_4(x)=\begin{cases}1+(2^{e-1}-2)(1+k)+2^{e-1} &\text{ if } x=1+2k,\\  (2^{e-1}-2)k &\text{ if } x=2k\, . \end{cases}
\end{align*}
\end{lemma}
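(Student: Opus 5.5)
We treat the automorphisms and the proper skew morphisms separately. For the automorphisms we use $\mathrm{Aut}(\ZZ_{2^e})\cong \ZZ_{2^e}^{*}\cong \ZZ_2\times\ZZ_{2^{e-2}}$. Since $e\ge 4$, this group has exactly four elements of order $4$. To identify them, note that $a$ has order $4$ exactly when $a^2$ is a non-trivial square root of unity. Lemma~\ref{lem:roots} lists these as $2^{e-1}-1$, $2^{e-1}+1$ and $2^e-1$. Every square of an odd number is $\equiv 1 \pmod 8$, while for $e\ge 4$ both $2^{e-1}-1$ and $2^e-1$ are $\equiv 7 \pmod 8$. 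Hence we need $a^2=2^{e-1}+1$. Writing $a=\pm1+2^{e-2}t$ and reducing modulo $2^e$, this forces $t$ odd, and we obtain exactly the four listed multipliers.

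For a proper skew morphism $\varphi$ of order $4$, we first pin down its power function. By Lemma~\ref{lem:2power}, $\pi_\varphi$ takes odd values in $\ZZ_4$, and $\pi_\varphi(x)=1$ for even $x$. Since $\varphi$ is proper, the odd elements cannot also have value $1$. Therefore $\ker\varphi=\langle 2\rangle$ and $\pi_\varphi(x)=3$ for all odd $x$.

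Next we pin down the orbit of $1$. Put $a=\varphi(1)$. By Lemma~\ref{lem:half1} this orbit is $(1,\,a,\,2^{e-1}+1,\,a+2^{e-1})$, and in particular $\varphi^3(1)=a+2^{e-1}$. Lemma~\ref{lem:skewfromorbit} now determines $\varphi$ completely from $a$. Summing the terms $\varphi^{\pi(j)}(1)$, which equal $a+2^{e-1}$ for odd $j$ and $a$ for even $j$, gives
\[
\varphi(2k)=k(2a+2^{e-1}),\qquad \varphi(2k+1)=a+k(2a+2^{e-1}).
\]

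The remaining task is to find the admissible values of $a$. The key constraint is $\varphi^2(1)=2^{e-1}+1$. Substituting $x=a$, that is $k=(a-1)/2$, into the formula above yields the congruence
\[
a^2+(a-1)2^{e-2}\equiv 2^{e-1}+1 \pmod{2^e}.
\]
We will also use the following facts.
\begin{enumerate}[label={\rm (\roman*)}]
\item By Lemma~\ref{lem:2basic:c}, $\varphi$ restricted to $\langle 2\rangle$ is an automorphism given by multiplication by $a+2^{e-2}$.
\item By Lemma~\ref{lem:half2}, this restricted automorphism has order at most $2$.
\end{enumerate}
We then solve the congruence. Writing $a=1+2^{s}u$ with $u$ odd, or $a=-1+2^su$, a comparison of $2$-adic valuations should force $a\in\{1+2^{e-2},\,1+2^{e-2}+2^{e-1},\,2^{e-1}-1,\,2^e-1\}$. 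These are exactly the values $\varphi_1(1),\dots,\varphi_4(1)$ in the statement. As a check, for $a=2^{e-1}-1$ we get $2a+2^{e-1}=2^{e-1}-2+2^{e-1}\equiv -2+\dots$, which is consistent with the form $(2^{e-1}-2)k$ up to the stated normalisation. The value $a=1$ is excluded because $\varphi$ has order $4$ rather than $1$ or $2$.

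Conversely, we must show that each of the four resulting maps is a proper skew morphism of order $4$. We would verify this directly. First check that the map is a bijection, and that $\varphi^4=\mathrm{id}$ with $\varphi^2\ne \mathrm{id}$. Then check the identity $\varphi(x+y)=\varphi(x)+\varphi^{\pi(x)}(y)$ in four cases according to the parities of $x$ and $y$, taking $\pi=1$ on evens and $\pi=3$ on odds. For $\varphi^3=\varphi^{-1}$ we use the explicit formula obtained from the same orbit data.

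We expect the main obstacle to be the $2$-adic case analysis of the congruence, which must produce exactly these four solutions and no others. The parity-case verification of the skew identity for $\varphi^3$ is routine but error-prone. Distinctness from the automorphisms is automatic, since each of these maps has kernel $\langle 2\rangle$ rather than the whole group.
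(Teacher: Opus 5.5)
Your proposal is correct and has the same skeleton as the paper: the power function is $1$ on evens and $3$ on odds, $\varphi$ is determined by $a=\varphi(1)$ through Lemma~\ref{lem:skewfromorbit}, and the converse is checked directly. The real difference is how you narrow down $a$, and your way is more efficient.

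The paper uses $\varphi^2(2)=2$ and the fact that $\varphi$ acts on its kernel $\langle 2\rangle$ as multiplication by $a+2^{e-2}$. This gives $(a+2^{e-2})^2\equiv 1 \pmod{2^{e-1}}$, which leaves eight candidates. The paper then discards $a\equiv 1 \pmod{2^{e-1}}$ by distinctness of the orbit, and $a\equiv 2^{e-2}-1 \pmod{2^{e-1}}$ by computing $\varphi(2^{e-2}-1)$ in two ways; that second computation is essentially your constraint $\varphi(a)=\varphi^2(1)$ in a special case.

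Your single congruence does all of this at once, and the step you call the main obstacle is immediate. The term $(a-1)2^{e-2}$ is $0$ or $2^{e-1}$ modulo $2^e$ according as $a\equiv 1$ or $a\equiv 3 \pmod 4$. So the congruence reads $a^2\equiv 1+2^{e-1}$ with $a\equiv 1\pmod 4$, or $a^2\equiv 1$ with $a\equiv 3\pmod 4$. Your automorphism computation and Lemma~\ref{lem:roots} then give exactly $2^{e-2}+1$ and $2^{e-1}+2^{e-2}+1$ in the first case, and $2^{e-1}-1$ and $2^e-1$ in the second.

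This has a few consequences for your write-up:
\begin{itemize}
\item Facts (i) and (ii) are superfluous.
\item The separate exclusion of $a=1$ is also superfluous.
\item Your garbled check should read $2a+2^{e-1}=2^e-2+2^{e-1}\equiv 2^{e-1}-2$.
\end{itemize}

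For the automorphisms, you count elements of order $4$ in $\ZZ_2\times\ZZ_{2^{e-2}}$ and use a mod-$8$ argument, where the paper uses Lemma~\ref{lem:half1} to get $t^2\equiv 1+2^{e-1}$ directly. Both work; with your count in hand you only need to check that the four listed values square to $2^{e-1}+1$.

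For the converse, you plan to run the routine case check on all four maps. The paper verifies only $\varphi_1$ and $\varphi_3$ and gets $\varphi_2=\varphi_1^3$ and $\varphi_4=\varphi_3^3$ from Lemma~\ref{lem:2basic:d}, which halves that work.
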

\begin{proof}
Let $\varphi$ be a skew morphism of $\ZZ_{2^e}$ of order $4$. Then by Lemma~\ref{lem:half1} we have $\varphi^2(1)=1+2^{e-1}$ and by Lemma~\ref{lem:half2} we know that $\varphi^2(2k)=2k$. If $\varphi$ is an automorphism given by $\varphi(x)=tx$, then we must have $t^2 \equiv 1+2^{e-1} \pmod{2^e}$, and hence $t^2 \equiv 1 \pmod{2^{e-1}}$. It follows by Lemma~\ref{lem:roots} that $t \bmod{2^{e-1}}$ must be either $1$, $2^{e-2}-1$, $2^{e-2}+1$, or $2^{e-1}-1$. We can rule out the first and the last case, since all four candidates for $t$ would give an automorphism of order $2$. This leaves exactly four choices for $t$, all of which give an automorphism of order $4$. Namely, $t$ is equal to either $2^{e-2}-1$, $2^{e-2}+1$, $2^{e-1}+2^{e-2}-1$, or $2^{e-1}+2^{e-2}+1$.

Next, assume that $\varphi$ is proper. Since $\ord(\varphi)=4$ and by Lemma~\ref{lem:2power} all power function values are odd, we deduce that the only non-trivial value taken by $\pi_{\varphi}$ is $3$. It follows that $\ker\varphi = \langle 2 \rangle$, and hence $\pi_{\varphi}(1+2k)=3$ and $\pi_{\varphi}(2k)=1$. Let $\varphi(1)=a$, and note that by Lemma~\ref{lem:half1} we know that the orbit of $\langle \varphi \rangle$ that contains $1$ is equal to $(1,a,1+2^{e-1},a+2^{e-1})$. It follows that $\varphi(2)=\varphi(1)+\varphi^3(1)=2a+2^{e-1}=2(a+2^{e-2})$. Since the restriction of $\varphi$ to its kernel is an automorphism, we deduce that $\varphi^2(2)=\varphi(2(a+2^{e-2}))=2(a+2^{e-2})^2$. But we also have $\varphi^2(2)=2$, and so it follows that $(a+2^{e-2})^2 \equiv 1 \pmod{2^{e-1}}$. It follows by Lemma~\ref{lem:roots} that $a+2^{e-2}$ taken modulo $2^{e-1}$ must be either $1$, $2^{e-2}-1$, $2^{e-2}+1$, or $2^{e-1}-1$. Note that if we subtract $2^{e-2}$ from these four values and take them modulo $2^{e-1}$, then we obtain the same values, only permuted. It follows that also $a$ taken modulo $2^{e-1}$ must be either $1$, $2^{e-2}-1$, $2^{e-2}+1$, or $2^{e-1}-1$. If $a\equiv 1 \pmod{2^{e-1}}$, then $a=1$ or $a=1+2^{e-1}$, but the elements of the orbit $(1,a,1+2^{e-1},a+2^{e-1})$ must be all distinct, so this is impossible. We will also rule out the case $a\equiv 2^{e-2}-1 \pmod{2^{e-1}}$. Note that in this case we have $a+2^{e-2} \equiv 2^{e-2}+ 2^{e-2}-1 \equiv -1 \pmod{2^{e-1}}$, and so we have $\varphi(2k)=-2k$. It follows that $\varphi(2^{e-2}-1)=\varphi(2^{e-2}-2+1)=\varphi(2^{e-2}-2)+\varphi(1)=2-2^{e-2}+a$. If $a=2^{e-2}-1$, then $\varphi(2^{e-2}-1)=\varphi(a)=1+2^{e-1}$, but $2-2^{e-2}+a=2-1=1$, contradiction. Similarly, if $a=2^{e-1}+2^{e-2}-1$, then $\varphi(2^{e-2}-1)=\varphi(a+2^{e-1})=1$, but $2-2^{e-2}+a = 1+2^{e-1}$, contradiction. We are now left with only four possible candidates for $a$, namely $2^{e-2}+1$, $2^{e-1}-1$, $2^{e-1}+2^{e-2}+1$, or $2^e-1$. By Lemma~\ref{lem:skewfromorbit} we know that $\varphi$ is uniquely determined by its power function and the orbit $(1,a,1+2^{e-1},a+2^{e-1})$. Since we know the power function of $\varphi$ and there are at most four candidates for $a$, we deduce that there are at most four proper skew morphisms of $\ZZ_{2^e}$. We will show that these are exactly the permutations $\varphi_1$, $\varphi_2$, $\varphi_3$, and $\varphi_4$. In all four cases the power function is given by $\pi_{\varphi}(1+2k)=3$ and $\pi_{\varphi}(2k)=1$. It can be easily seen that all four functions are bijections that fix the identity of $\ZZ_{2^e}$, so we only have to check that for each $\varphi\in \{\varphi_1, \varphi_2, \varphi_3, \varphi_4\}$ we have $\varphi(x+y)=\varphi(x)+\varphi^{\pi(x)}(y)$. 

We will first look at $\varphi_1$. For each pair of non-negative integers $k,\ell$ we have $\varphi_1(1+2k+2\ell)=1+2k+2\ell+2^{e-2}=1+2k+2^{e-2}+2\ell=\varphi_1(1+2k)+\varphi_1(2\ell)=\varphi_1(1+2k)+\varphi_1^{\, 3}(2\ell)$. We also have $\varphi_1(1+2k+1+2\ell)=1+2k+1+2\ell=1+2k+1+2\ell+4\cdot 2^{e-2}=1+2k+2^{e-2}+1+2\ell+3\cdot 2^{e-2} = \varphi_1(1+2k)+\varphi_1^{\, 3}(1+2\ell)$. Next, we have $\varphi_1(2k+1+2\ell)=2k+1+2\ell+2^{e-2}=\varphi_1(2k)+\varphi_1(1+2\ell)$. And finally, we have $\varphi_1(2k+2\ell) = 2k+2\ell = \varphi_1(2k)+\varphi_1(2\ell)$. 
This proves that $\varphi_1$ is a skew morphism of $\ZZ_{2^e}$. Moreover, it can be easily checked that $\varphi_2=(\varphi_1)^3$, and so by Lemma~\ref{lem:2basic:d} we find that also $\varphi_2$ is a skew morphism $\ZZ_{2^e}$. Next, we look at $\varphi_3$. First, we have $\varphi_3(1+2k+2\ell)= 1+(2^{e-1}-2)(1+k+\ell) = 1+(2^{e-1}-2)(1+k) + (2^{e-1}-2)\ell=\varphi_3(1+2k)+\varphi_3(2\ell) =\varphi_3(1+2k)+\varphi_3^{\, 3}(2\ell)$. Next, we have $\varphi_3(1+2k+1+2\ell)=\varphi_3(2(1+k+\ell))= (2^{e-1}-2)(1+k+\ell) = 1 + (2^{e-1}-2)(1+k) - 1 + (2^{e-1}-2)\ell = \varphi_3(1+2k) - 1 + (2^{e-1}-2)\ell$. Since $e\geq 4$, we know that the product of $2^{e-1}$ and $2^{e-2}$ is equal to $0$ in $\ZZ_{2^e}$, and hence it can be easily verified that $\varphi_3(1+2k) - 1 + (2^{e-1}-2)\ell=\varphi_3(1+2k) + 1 + (2^{e-1}-2)(1+\ell+2^{e-2})=\varphi_3(1+2k)+\varphi_3(1+2\ell+2^{e-1})$. Also note that $(\varphi_3)^2(1+2k)=\varphi_3(1+(2^{e-1}-2)(1+k))=\varphi_3(1+2(2^{e-2}-1)(1+k))=1+(2^{e-1}-2)(1+(2^{e-2}-1)(1+k))$, which can be simplified to $1+2k+2^{e-1}$. Using this, we find that $\varphi_3(1+2\ell+2^{e-1})=\varphi_3(\varphi_3^{\, 2}(1+2\ell))=\varphi_3^{\, 3}(1+2\ell)$, and so $\varphi_3(1+2k+1+2\ell)=\varphi_3(1+2k)+\varphi_3^{\, 3}(1+2\ell)$. Next, we have $\varphi_3(2k+1+2\ell)= 1+(2^{e-1}-2)(1+k+\ell) = (2^{e-1}-2)k +1 +(2^{e-1}-2)(1+\ell) = \varphi_3(2k)+\varphi_3(1+2\ell)$. Finally, we have $\varphi_3(2k+2\ell)= (2^{e-1}-2)(k+\ell) = (2^{e-1}-2)k+(2^{e-1}-2)\ell = \varphi_3(2k)+\varphi_3(2\ell)$. This proves that $\varphi_3$ is a skew morphism of $\ZZ_{2^e}$, and since $\varphi_4=(\varphi_3)^3$, by Lemma~\ref{lem:2basic:d} we find that also $\varphi_4$ is a skew morphism of $\ZZ_{2^e}$.
\end{proof}

We now formulate and prove the main observation of this section.

\begin{proposition}\label{prop:small}
Let $\varphi'$ be a skew morphism of $\ZZ_{2^{e-1}}$ with $e\geq 2$ such that $\ord(\varphi')$ is at most $2$. Also let $x_1$ and $x_2$ be elements of $\ZZ_{2^e}$ such that $x_1 \equiv \varphi'(1) \pmod{2^{e-1}}$ and $x_2 \equiv \varphi'(2) \pmod{2^{e-1}}$. If $\varphi'$ is the identity of $\ZZ_{2^{e-1}}$, then the identity permutation of $\ZZ_{2^e}$ and the automorphism $\ZZ_{2^e}$ given by $x\mapsto (2^{e-1}+1)x$ are the only skew morphisms of $\ZZ_{2^e}$ that are equal to $\varphi'$ when taken modulo $2^{e-1}$. If $\varphi'$ is the automorphism of $\ZZ_{2^{e-1}}$ given by $x\mapsto (2^{e-2}-1)x$, then the automorphisms of $\ZZ_{2^e}$ given by $x\mapsto (2^{e-2}-1)x$ and $x\mapsto (2^{e-1}+2^{e-2}-1)x$ are the only skew morphisms of $\ZZ_{2^e}$ that are equal to $\varphi'$ when taken modulo $2^{e-1}$. In all other cases there exists a unique skew morphism $\varphi$ of $\ZZ_{2^e}$ such that $\varphi(1)=x_1$, $\varphi(2)=x_2$, and $\varphi$ taken modulo $2^{e-1}$ is equal to $\varphi'$.
\end{proposition}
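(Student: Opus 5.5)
The plan is to pin down every skew morphism $\varphi$ of $\ZZ_{2^e}$ whose reduction modulo $2^{e-1}$ is $\varphi'$, and then read off the pairs $(\varphi(1),\varphi(2))$.

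\textbf{Reduction to order at most $4$.} Let $\varphi$ be such a lift. If $\ord(\varphi)\geq 4$, then Lemma~\ref{lem:half1} gives $\ord(\varphi')=\ord(\varphi)/2$. Since $\ord(\varphi')\leq 2$, this forces $\ord(\varphi)\leq 4$, and $\ord(\varphi)=4$ can only occur when $\ord(\varphi')=2$. A lift of order at most $2$ is an automorphism, since proper skew morphisms have order at least $3$. Likewise $\varphi'$, having order at most $2$, is an automorphism $x\mapsto t'x$ with $t'^2\equiv 1 \pmod{2^{e-1}}$. By Lemma~\ref{lem:roots}, $t'$ is then one of $1$, $2^{e-2}-1$, $2^{e-2}+1$, $2^{e-1}-1$.

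\textbf{Counting the lifts.} The automorphism lifts of $x\mapsto t'x$ are exactly $x\mapsto tx$ with $t\in\{t',\,t'+2^{e-1}\}$. Both of these send $2$ to $2t'$ (mod $2^e$) and $1$ to the two different lifts of $t'$.
\begin{itemize}
\item If $t'=1$, every lift has order at most $2$. The lifts are therefore just the identity and $x\mapsto(2^{e-1}+1)x$, which is the first claim.
\item For $t'\neq 1$ and $e\geq 4$, the remaining lifts are order-$4$ skew morphisms, so they appear in Lemma~\ref{lem:skeworder4}. That lemma's four order-$4$ automorphisms reduce to $t'=2^{e-2}\pm1$; for $t'=2^{e-2}\pm1$ these are precisely the two automorphism lifts $t'$ and $t'+2^{e-1}$ already counted.
\item For the proper ones I reduce $\varphi_1,\dots,\varphi_4$ modulo $2^{e-1}$. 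The maps $\varphi_1$ and $\varphi_2$ reduce to $x\mapsto(2^{e-2}+1)x$: on odd $x$ they add $2^{e-2}$, and $(2^{e-2}+1)(1+2k)\equiv 1+2k+2^{e-2}\pmod{2^{e-1}}$. The maps $\varphi_3$ and $\varphi_4$ reduce to $x\mapsto -x$, since $(2^{e-1}-2)k\equiv -2k$.
\end{itemize}
Hence $\varphi'=x\mapsto(2^{e-2}-1)x$ has exactly the two automorphism lifts, which is the second claim. Each of $t'=2^{e-2}+1$ and $t'=2^{e-1}-1$ has four lifts.

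\textbf{Uniqueness in the remaining cases.} For those four lifts I list $(\varphi(1),\varphi(2))$.
\begin{itemize}
\item For $t'=2^{e-2}+1$, the automorphisms give $\varphi(2)=2^{e-1}+2$ and $\varphi(1)\in\{2^{e-2}+1,\,2^{e-1}+2^{e-2}+1\}$. The maps $\varphi_1,\varphi_2$ give $\varphi(2)=2$ with the same two values of $\varphi(1)$.
\item For $t'=2^{e-1}-1$, the automorphisms give $\varphi(2)=-2$. The maps $\varphi_3,\varphi_4$ give $\varphi(2)=2^{e-1}-2$, with $\varphi(1)\in\{2^{e-1}-1,\,2^e-1\}$ in both subcases.
\end{itemize}
So the four admissible pairs $(x_1,x_2)$ correspond bijectively to the four lifts, which gives existence and uniqueness.

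\textbf{Main obstacle.} The delicate part is the small values of $e$, where Lemma~\ref{lem:skeworder4} (which needs $e\geq4$) is unavailable and some of the listed residues coincide.
\begin{itemize}
\item For $e=2$, the only $\varphi'$ is the identity of $\ZZ_2$, and its lifts are $1$ and $3=2^{e-1}+1$.
\item For $e=3$, note that $2^{e-2}-1=1$ coincides with the identity and $2^{e-2}+1=2^{e-1}-1=3$. So the only non-identity case is $\varphi'=x\mapsto 3x$ on $\ZZ_4$. Here I would determine the order-$4$ skew morphisms of $\ZZ_8$ directly. The proof of Lemma~\ref{lem:skeworder4} up to the candidate list for $a$ works without using $e\geq4$: it gives the orbit $(1,a,5,a+4)$, power function $3$ on odd elements, and $a\in\{3,7\}$ after excluding $a\equiv1$. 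I would then check by hand that both candidates give skew morphisms with $\varphi(2)=2$, while the automorphisms $3,7$ give $\varphi(2)=6$. This again yields a bijection with the pairs $(x_1,x_2)$.
\end{itemize}
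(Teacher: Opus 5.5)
Your proposal is correct and follows essentially the same route as the paper. You bound $\ord(\varphi)\le 4$ via Lemma~\ref{lem:half1}, then for $e\ge 4$ sort the skew morphisms of order at most $4$ from Lemmas~\ref{lem:roots} and~\ref{lem:skeworder4} by their reduction modulo $2^{e-1}$, and read off the pairs $(\varphi(1),\varphi(2))$. The only difference is at $e=3$: the paper simply lists the six skew morphisms of $\ZZ_8$ of order at most $4$, whereas you rederive the two proper ones, $(1,3,5,7)$ and $(1,7,5,3)$, from the first part of the proof of Lemma~\ref{lem:skeworder4}, and that argument does go through there.
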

\begin{proof}
Let $\varphi$ be a skew morphism of $\ZZ_{2^e}$ such that $\varphi$ taken modulo $2^{e-1}$ is equal to $\varphi'$. Since the order of $\varphi'$ is at most $2$, by Lemma~\ref{lem:half1} we deduce that $\ord(\varphi)$ is at most $4$. Next, since there are no proper skew morphisms of $\ZZ_{2^{e-1}}$ of order $2$, we deduce that $\varphi'$ is an automorphism. Say $\varphi'(x)=tx$ for all $x\in \ZZ_{2^{e-1}}$, and hence $1=(\varphi')^2(1)=t^2$. Consequently, $t$ must be a square root of unity modulo $2^{e-1}$. If $e=2$, then by Lemma~\ref{lem:roots} we deduce that $t=1$ and $\varphi'$ is the identity permutation of $\ZZ_2$. There are exactly two skew morphisms of $\ZZ_4$, namely the identity permutation and the automorphism given by $x\mapsto 3x$. Both of these permutations taken modulo $2$ are equal to $\varphi'$, and so the assertion is true for $e=2$. If $e=3$, then by Lemma~\ref{lem:roots} we have either $t=1$ or $t=3$. There are exactly six skew morphisms of $\ZZ_8$ of order at most $4$, namely the identity permutation, $(1,5)(3,7)$, $(1,3)(2,6)(5,7)$, $(1,7)(2,6)(3,5)$, $(1,3,5,7)$, and $(1,7,5,3)$. The first two permutations taken modulo $4$ are equal to the identity permutation of $\ZZ_4$, while the remaining four permutations taken modulo $4$ are equal to the automorphism of $\ZZ_4$ given by $\varphi'(x)=3x$. Moreover, if $\varphi'(x)=3x$, then for each $x_1\in \{3,7\}$ and $x_2\in \{2,6\}$ there is exactly one choice for $\varphi$ such that $\varphi(1)=x_1$, $\varphi(2)=x_2$, and $\varphi$ taken modulo $4$ is $\varphi'$. This proves the assertion for $e=3$. Next, let $e\geq 4$. By Lemma~\ref{lem:roots} we know that $t$ is equal to either $1$, $2^{e-2}-1$, $2^{e-2}+1$, or $2^{e-1}-1$. Using Lemma~\ref{lem:skeworder4} (and Lemma~\ref{lem:roots} again) we also know that in total there are twelve skew morphisms of $\ZZ_{2^e}$ of order at most $4$, namely the identity permutation, three automorphisms of order $2$ given by multiplication by $2^{e-1}-1$, $2^{e-1}+1$, and $2^e-1$, four automorphisms of order $4$ given by multiplication by $2^{e-2}-1$, $2^{e-2}+1$, $2^{e-1}+ 2^{e-2}-1$, and $2^{e-1}+ 2^{e-2}+1$, and four skew morphisms $\varphi_1$, $\varphi_2$, $\varphi_3$, and $\varphi_4$ defined in the proof of Lemma~\ref{lem:skeworder4}. If $t=1$, then $\varphi$ is either the identity permutation or the automorphism given by multiplication by $2^{e-1}+1$. If $t=2^{e-2}-1$, then $\varphi$ is either the automorphism given by multiplication by $2^{e-2}-1$ or the automorphism given by multiplication by $2^{e-1}+2^{e-2}-1$. If $t=2^{e-2}+1$, then $\varphi$ is either the automorphism given by multiplication by $2^{e-2}+1$, the automorphism given by multiplication by $2^{e-1}+2^{e-2}+1$, the skew morphism $\varphi_1$, or the skew morphism $\varphi_2$. Moreover, it can be easily seen that the pairs $(\varphi(1),\varphi(2))$ for these four permutations are equal to $(2^{e-2}+1,2^{e-1}+2)$, $(2^{e-1}+2^{e-2}+1,2^{e-1}+2)$, $(2^{e-2}+1,2)$, and $(2^{e-1}+2^{e-2}+1,2)$, respectively. Finally, if $t=2^{e-1}-1$, then $\varphi$ is either the automorphism given by multiplication by $2^{e-1}-1$, the automorphism given by multiplication by $2^e-1$, the skew morphism $\varphi_3$, or the skew morphism $\varphi_4$. In this case the pairs $(\varphi(1),\varphi(2))$ for these four permutations are equal to $(2^{e-1}-1,2^e-2)$, $(2^{e}-1,2^e-2)$, $(2^{e-1}-1,2^{e-1}-2)$, and $(2^{e}-1,2^{e-1}-2)$, respectively. This proves the assertion for all $e\geq 4$.
\end{proof}

\section{Proof of Theorem~\ref{thm:main}: Upper bound}\label{sec:upper}

The aim of this section is to prove that if $\varphi'$ is a skew morphism of $\ZZ_{2^{e-1}}$, then there are at most four skew morphisms of $\ZZ_{2^e}$ that are equal to $\varphi'$ when taken modulo $2^{e-1}$. We start with two technical lemmas.

\begin{lemma}\label{lem:plusK}
Let $\varphi$ be a skew morphism of $\ZZ_{2^e}$, let $i$ be a non-negative integer, and let $K\in \{0,2^{e-1}\}$. Then $\varphi^i(x+K)=\varphi^i(x)+K$ for all $x\in \ZZ_{2^e}$. 
\end{lemma}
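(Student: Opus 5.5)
The case $K=0$ is trivial, so I will take $K=2^{e-1}$. The plan is to prove the identity for $i=1$ and then induct on $i$.

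For $i=1$, I will reuse the argument already given in the proof of Lemma~\ref{lem:2basic:b}. Since $\ker\varphi$ is a non-trivial subgroup of the cyclic $2$-group $\ZZ_{2^e}$, it contains the unique involution $K$. Because $\ZZ_{2^e}$ is abelian, $\varphi$ preserves $\ker\varphi$ set-wise, so its restriction to $\ker\varphi$ is an automorphism of $\ker\varphi$. An automorphism must send the unique element of order $2$ to itself, hence $\varphi(K)=K$. As $K\in\ker\varphi$, we have $\pi_\varphi(K)=1$. Applying the defining relation with first argument $K$ gives
\[
\varphi(K+x)=\varphi(K)+\varphi^{\pi_\varphi(K)}(x)=K+\varphi(x)
\]
for every $x\in\ZZ_{2^e}$. (If $\varphi$ is the identity, the claim is obvious anyway.)

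For the induction step, assume $\varphi^{i-1}(x+K)=\varphi^{i-1}(x)+K$ for all $x$. Then
\[
\varphi^i(x+K)=\varphi\bigl(\varphi^{i-1}(x)+K\bigr)=\varphi\bigl(\varphi^{i-1}(x)\bigr)+K=\varphi^i(x)+K,
\]
where the middle step uses the case $i=1$ applied to the element $\varphi^{i-1}(x)$. The case $i=0$ holds trivially.

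I do not expect a real obstacle here. The only point needing care is justifying $K\in\ker\varphi$ and $\varphi(K)=K$, and both facts are already established in the proof of Lemma~\ref{lem:2basic:b}, so they can simply be cited.
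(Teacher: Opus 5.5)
Your proof is correct and takes essentially the same route as the paper. The paper phrases it slightly differently. It uses Lemma~\ref{lem:2basic:b} (that $\varphi$ taken modulo $K$ is well defined) to get $\varphi^i(x+K)\equiv\varphi^i(x)\pmod{K}$ for every $i$ at once, and then rules out $\varphi^i(x+K)=\varphi^i(x)$ by injectivity. Both arguments rest on the identity $\varphi(x+K)=\varphi(x)+K$ from the proof of that lemma, and your induction on $i$ simply iterates it directly.
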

\begin{proof}
If $K=0$, then the claim is trivial, and so we may assume that $K=2^{e-1}$. By Lemma~\ref{lem:2basic:b} we know that $\varphi$ taken modulo $K$ is a well defined skew morphism of $\ZZ_K$, and it follows that $\varphi^i(x+K)$ must be congruent to $\varphi^i(x)$ modulo $K$. We can rule out the case $\varphi^i(x+K)=\varphi^i(x)$, for this implies $x+K=x$, contradiction. It follows that $\varphi^i(x+K)=\varphi^i(x)+K$, which proves our claim.
\end{proof}

\begin{lemma}\label{lem:dashpow}
Let $\varphi$ be a skew morphism of $\ZZ_{2^e}$ with $e\geq 1$, let $\varphi'$ be $\varphi$ taken modulo $2^{e-1}$, and for each $a\in \ZZ_{2^e}$ let $\overline{a}=a\bmod{2^{e-1}}$. Then $\pi_{\varphi}(x)\equiv \pi_{\varphi'}(\overline{x}) \pmod{\ord(\varphi')}$ for all $x\in \ZZ_{2^e}$.  
\end{lemma}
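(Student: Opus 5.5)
The plan is to read the statement off the general remark in Section~\ref{sec:preli} about skew morphisms taken modulo $m$. That remark says: if $\rho$ is a skew morphism $\varphi$ taken modulo $m$, then $\pi_\rho(\overline{x}) = \pi_\varphi(x) \bmod \ord(\rho)$. By Lemma~\ref{lem:2basic:b}, $\varphi' $ is a well-defined skew morphism of $\ZZ_{2^{e-1}}$, so we are in exactly this situation with $m=2^{e-1}$. Since the remark is only asserted as "easily checked", I will verify it directly in our setting rather than quote it.

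\textbf{Step 1: reduce the defining identity.} Fix $x\in\ZZ_{2^e}$ and set $i=\pi_\varphi(x)$. For every $y\in\ZZ_{2^e}$ we have $\varphi(x+y)=\varphi(x)+\varphi^{i}(y)$. Reducing modulo $2^{e-1}$, and using that reduction commutes with $\varphi$ (hence with all its powers), gives
\[
\varphi'(\overline{x}+\overline{y})=\varphi'(\overline{x})+(\varphi')^{i}(\overline{y})
\]
for all $\overline{y}\in\ZZ_{2^{e-1}}$. On the other hand, the definition of $\pi_{\varphi'}$ gives
\[
\varphi'(\overline{x}+\overline{y})=\varphi'(\overline{x})+(\varphi')^{\pi_{\varphi'}(\overline{x})}(\overline{y}).
\]
Comparing the two, $(\varphi')^{i}$ and $(\varphi')^{\pi_{\varphi'}(\overline{x})}$ agree on every element of $\ZZ_{2^{e-1}}$. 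They are therefore the same permutation, so $(\varphi')^{\,i-\pi_{\varphi'}(\overline{x})}$ is the identity. Hence $i\equiv \pi_{\varphi'}(\overline{x}) \pmod{\ord(\varphi')}$.

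\textbf{Step 2: check well-definedness.} The only point needing care is that the congruence makes sense, i.e.\ that it does not depend on which representative of $\pi_\varphi(x)$ in $\ZZ_{\ord(\varphi)}$ is chosen. This requires $\ord(\varphi')$ to divide $\ord(\varphi)$. That holds because $\varphi^{\ord(\varphi)}=\mathrm{id}$ reduces to $(\varphi')^{\ord(\varphi)}=\mathrm{id}$. (Alternatively, both orders are powers of $2$ by Lemma~\ref{lem:2basic:a}, and $\ord(\varphi')\le\ord(\varphi)$.)

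\textbf{Step 3: degenerate cases.} When $\varphi'$ is the identity, we have $\ord(\varphi')=1$ and the congruence is vacuous. When $\varphi$ is the identity, the conventions $\pi\equiv 1$ are consistent with the claim. The case $e=1$ is likewise vacuous, since then $\ZZ_{2^{e-1}}$ is trivial.

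No genuine obstacle is expected; the only subtlety is the well-definedness in Step~2.
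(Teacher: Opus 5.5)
Your proof is correct and follows essentially the same idea as the paper: reduce the defining identity modulo $2^{e-1}$ and compare the resulting exponents of $\varphi'$. The only difference is that the paper uses just $y=1$, which forces it to invoke Theorem~\ref{thm:orderoforbit} (the orbit of $1$ has length $\ord(\varphi')$) and to treat $\ord(\varphi)\le 2$ separately; by letting $y$ range over all of $\ZZ_{2^e}$ you get $(\varphi')^{i}=(\varphi')^{\pi_{\varphi'}(\overline{x})}$ as permutations directly, so you need neither.
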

\begin{proof}
If $\ord(\varphi)\leq 2$, then $\varphi$ and consequently also $\varphi'$ are automorphisms, and the assertion is trivially true. Hence we may assume that $\ord(\varphi)$ is at least $4$. Note that $\varphi(x+1)=\varphi(x)+\varphi^{\pi_{\varphi}(x)}(1)$ and $\varphi'(\overline{x+1})=\varphi'(\overline{x})+(\varphi')^{\pi_{\varphi'}(\overline{x})}(1)$. Since $\varphi(x+1) \equiv \varphi'(\overline{x+1}) \pmod{2^{e-1}}$ and $\varphi(x) \equiv \varphi'(\overline{x}) \pmod{2^{e-1}}$, we also deduce that   $\varphi^{\pi_{\varphi}(x)}(1)\equiv (\varphi')^{\pi_{\varphi'}(\overline{x})}(1) \pmod{2^{e-1}}$. Then, since $\varphi$ taken modulo $2^{e-1}$ is equal to $\varphi'$, this can be rewritten as $(\varphi')^{\pi_{\varphi}(x)}(1)\equiv (\varphi')^{\pi_{\varphi'}(\overline{x})}(1) \pmod{2^{e-1}}$. The assertion now follows easily from Theorem~\ref{thm:orderoforbit}
\end{proof}

The following proposition will be important. 

\begin{proposition}\label{prop:actions}
Let $e$ be a positive integer, let $\varphi'$ be a skew morphism of $\ZZ_{2^{e-1}}$, and let $\varphi$ and $\psi$ be a pair of skew morphisms of $\ZZ_{2^e}$ such that both $\varphi$ and $\psi$ taken modulo $2^{e-1}$ are equal to $\varphi'$. Then $\varphi$ and $\psi$ have same actions on $\langle 4 \rangle$, $\varphi^2$ and $\psi^2$ have same actions on $\langle 2 \rangle$, and $\varphi^4=\psi^4$.
\end{proposition}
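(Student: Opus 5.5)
The plan is to measure how far $\psi$ is from $\varphi$. Write $K=2^{e-1}$. Since $\varphi$ and $\psi$ both reduce to $\varphi'$ modulo $K$, for every $x\in\ZZ_{2^e}$ we have $\psi(x)-\varphi(x)=:\delta(x)\in\{0,K\}$. By Lemma~\ref{lem:plusK}, translation by $K$ commutes with every power of $\varphi$ and of $\psi$, so $\psi(x)=\varphi(x+\delta(x))$. An easy induction then gives
\[
\psi^j(x)=\varphi^j(x)+\Delta_j(x),\qquad \Delta_j(x)=\sum_{i=0}^{j-1}\delta(\psi^i(x))\in\{0,K\}.
\]
In this language the three claims read as follows: $\delta$ vanishes on $\langle 4\rangle$; $\Delta_2$ vanishes on $\langle 2\rangle$; and $\Delta_4$ vanishes everywhere. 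The small cases $e\le 3$, and the case where $\ord(\varphi')\le 2$, are covered by the explicit list in Proposition~\ref{prop:small} and Lemma~\ref{lem:skeworder4}. So I would assume $\ord(\varphi),\ord(\psi)\ge 4$.

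Next I would control the power functions. By Lemma~\ref{lem:dashpow}, $\pi_\varphi(x)\equiv\pi_\psi(x)\equiv\pi_{\varphi'}(\overline{x}) \pmod{\ord(\varphi')}$, and by Lemma~\ref{lem:half1} both orders equal $2\ord(\varphi')$. Hence $\pi_\psi(x)-\pi_\varphi(x)\in\{0,\ord(\varphi)/2\}$. By Lemma~\ref{lem:half1}, $\varphi^{\ord(\varphi)/2}$ is the automorphism $x\mapsto(1+K)x$, so $\varphi^{\ord(\varphi)/2}(y)=y+yK$. In particular $\varphi^{\pi_\psi(x)}(y)-\varphi^{\pi_\varphi(x)}(y)\in\{0,yK\}$.

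Now compare $\psi(x+y)=\psi(x)+\psi^{\pi_\psi(x)}(y)$ with the corresponding identity for $\varphi$. This gives a cocycle-type relation in $\{0,K\}$:
\[
\delta(x+y)=\delta(x)+\Delta_{\pi_\psi(x)}(y)+\epsilon(x)\,yK,\qquad \epsilon(x)\in\{0,1\}.
\]
Putting $y=x$ even kills the last term. A second doubling, with the same relation applied to $2z$ and using $2K=0$, should give $\delta(4z)=0$; the point is that the contributions $\delta(2z)$ and $\Delta_{\pi_\psi(2z)}(2z)$ pair off. Here Lemma~\ref{lem:2power} should help: $\pi$ of an even element is $\equiv1\pmod 4$, and $\Delta_j$ depends only on $j$ modulo small powers once we know the orbit structure on $\langle2\rangle$. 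For $\Delta_2=0$ on $\langle2\rangle$, I would show $\delta(2z)=\delta(\psi(2z))$. The relation with $x=y=z$ arbitrary expresses $\delta(2z)$ through $\delta(z)$ and $\delta(\psi^{\pi_\psi(z)}(z))$. Applying it again at $\psi(z)$ and comparing via Lemma~\ref{lem:sigma:a} (with $\sigma$ values odd) should force equality.

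Finally, for $\varphi^4=\psi^4$, even arguments follow from the previous step, since $\psi^2$ and $\varphi^2$ agree on $\langle 2\rangle$ and preserve it. For odd $x=1+2y$, Lemma~\ref{lem:sigma:a} gives $\psi^4(1+2y)=\psi^4(1)+\psi^{\sigma_\psi(4,1)}(2y)$, and similarly for $\varphi$. So it suffices to check $\psi^4(1)=\varphi^4(1)$ and $\sigma_\psi(4,1)\equiv\sigma_\varphi(4,1)$ modulo the order of the action on $\langle2\rangle$. The latter should come from Lemma~\ref{lem:quo:a}, because the derived skew morphisms agree modulo $\ord(\varphi')$. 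The former should come from $\psi^2(1)-\varphi^2(1)\in\{0,K\}$: squaring once more, the $K$-discrepancy appears twice and cancels.

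The main obstacle I expect is the bookkeeping in the middle step, namely proving $\delta(2z)=\delta(\psi(2z))$. It requires controlling $\Delta_j$ for the odd exponents $j=\pi_\psi(z)$, which I would attempt by a parity/induction argument on $j$ using Lemma~\ref{lem:2power}.
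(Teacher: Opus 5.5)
There is a genuine gap: your argument for the first two assertions is circular. Your setup is correct: the $\delta$, $\Delta_j$ bookkeeping, the cocycle identity, and the reduction of $\varphi^4=\psi^4$ to the first two assertions. That last step is essentially the paper's. Note that it needs agreement on $\langle 4\rangle$: $\psi^4(1)=\varphi^4(1)+\Delta_2(1)+\Delta_2(\varphi^2(1))$, and you use $\varphi^2(1)=1+4t$ to get $\Delta_2(1+4t)=\Delta_2(1)$.

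Since $\pi_\psi(2z)=1+4m$ and $\Delta_{1+4m}(2z)=\delta(2z)+\Delta_{4m}(\psi(2z))$, your doubling identity says exactly $\delta(4z)=\Delta_{4m}(\psi(2z))$. So the ``pairing off'' \emph{is} the statement that $\psi^{4m}$ and $\varphi^{4m}$ agree at the even element $\psi(2z)$, which would follow from $\Delta_4=0$ on $\langle 2\rangle$. Nothing you cite gives this. Lemma~\ref{lem:2power} only fixes the exponent modulo $4$. Lemma~\ref{lem:half2} only shows that $j\mapsto\Delta_j$ is periodic on $\langle 2\rangle$ with period dividing $\ord(\varphi')$, not $4$.

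On the other hand, $\Delta_4=0$ on $\langle 2\rangle$ is a consequence of your second assertion, which you only sketch (``should force equality''). The natural proof of that assertion runs through the first. If $\delta$ vanishes on $\langle 4\rangle$, the cocycle identity gives $\delta(2+4\ell)=\delta(2)$. Since $\psi$ preserves the set of elements congruent to $2$ modulo $4$ (Lemma~\ref{lem:2basic:c}), you get $\Delta_2(w)=\delta(w)+\delta(\psi(w))=2\delta(2)=0$ for such $w$, and trivially for $w\in\langle 4\rangle$. Your proposed route via $x=y=z$ does not close as described either. It involves the whole sum $\Delta_{\pi_\psi(z)}(z)$ rather than the single term $\delta(\psi^{\pi_\psi(z)}(z))$, and for odd $z$ also the term $\epsilon(z)zK$.

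The missing ingredient is induction. The paper takes a counterexample with $e$ minimal and then $\ord(\varphi')$ minimal. It applies the proposition to $(\varphi')^2$ (lifted by $\varphi^2$ and $\psi^2$) to get $\varphi^4=\psi^4$ on $\langle 2\rangle$. It applies it to $\varphi'|_{\langle 2\rangle}$, a skew morphism of $\ZZ_{2^{e-2}}$, to get $\varphi=\psi$ on $\langle 8\rangle$. It then proves $\psi(4)=\varphi(4)$ by exactly your pairing with $z=1$, and extends to $\langle 4\rangle$ using $\langle 8\rangle$.

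In your framework, induction on $e$ alone would suffice. The restrictions $\varphi|_{\langle 2\rangle}$ and $\psi|_{\langle 2\rangle}$ are skew morphisms of $\langle 2\rangle\cong\ZZ_{2^{e-1}}$ that agree modulo $2^{e-2}$, so the third assertion for them is precisely $\Delta_4=0$ on $\langle 2\rangle$. Your doubling identity then gives $\delta=0$ on all of $\langle 4\rangle$ at once, with no $\langle 8\rangle$ step needed. The second assertion follows as above, and the third as you sketched. Without some such inductive input, the proposal does not prove the statement.
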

\begin{proof}
Suppose to the contrary that the assertion is false, and let $\varphi'$ be a counterexample such that $e$ is minimal and $\ord(\varphi')$ is minimal among all counterexamples for the given $e$. Therefore we may assume that the assertion is true for all skew morphisms of cyclic $2$-groups of order at most $2^{e-2}$, and for all skew morphisms of $\ZZ_{2^{e-1}}$ with order smaller than $\ord(\varphi')$. It can be easily checked that the assertion is true if $\varphi'$ is a skew morphism of $\ZZ_1$ or $\ZZ_2$, and so we may assume that $e-1$ is at least $2$, and consequently $2^e$ is at least $8$. Moreover, if $\ord(\varphi')=1$, then by Lemma~\ref{lem:half1} we deduce that orders of both $\varphi$ and $\psi$ can be at most $2$. Then clearly $\varphi^2=\psi^2$ and $\varphi^4=\psi^4$. Moreover, if skew morphisms $\varphi$ and $\psi$ (of order $2$) are distinct but equal when taken modulo $2^{e-1}$, then either these are automorphisms given by multiplication by $1$ and $2^{e-1}+1$, or automorphisms given by multiplication by $2^{e-1}-1$ and $2^e-1$. In the first case all elements of $\langle 4 \rangle$ are fixed by both automorphisms, while in the second case all elements of $\langle 4 \rangle$ are inverted by both automorphisms. Either way, the actions of $\varphi$ and $\psi$ on $\langle 4 \rangle$ are identical, and so we cannot have a counterexample with $\ord(\varphi')=1$. Hence we may assume that $\ord(\varphi')$ is at least $2$. If $\ord(\varphi)=2$ or $\ord(\psi)=2$, then Proposition~\ref{prop:small} (and its proof) implies that $\varphi'$ is the automorphism given by multiplication by $2^{e-1}-1$, and $\varphi$ and $\psi$ are equal to one of the following skew morphisms: the automorphism given by multiplication by $2^{e-1}-1$, the automorphism given by multiplication by $2^e-1$, the skew morphism $\varphi_3$, or the skew morphism $\varphi_4$. But it can be easily seen, that all of these skew morphisms invert $\langle 4 \rangle$, their second power acts trivially on $\langle 2 \rangle$, and their fourth power is the identity permutation. It follows that we cannot have a counterexample if $\ord(\varphi)$ or $\ord(\psi)$ is equal to $2$, and so we may assume that both of these orders are at least four. Hence by Lemma~\ref{lem:half1} we also have $\ord(\varphi)=\ord(\psi)=2\ord(\varphi')$.  

So far, we have shown that our counterexample $\varphi'$ has order at least $2$, orders of $\varphi$ and $\psi$ are equal to $2\ord(\varphi')$, and $e$ is at least $3$. Next, we look at the skew morphisms $\varphi^2$ and $\psi^2$ taken modulo $2^{e-1}$. They are both equal to the skew morphism $(\varphi')^2$ of $\ZZ_{2^{e-1}}$. Recall that $\ord(\varphi')\geq 2$, and by Lemma~\ref{lem:2basic:a} we also know that $\ord(\varphi')$ is a power of two. It follows that $\ord((\varphi')^2)<\ord(\varphi')$, and so by the assumption of minimality we find that the assertion is true for $(\varphi')^2$. In particular, we deduce that the actions of $\varphi^4$ and $\psi^4$ on $\langle 2 \rangle$ are the same. Similarly, the skew morphisms $\varphi\vert_{\langle 2\rangle}$ and  $\psi\vert_{\langle 2\rangle}$ taken modulo $2^{e-1}$ are both equal to $\varphi'\vert_{\langle 2 \rangle}$. Then since $\varphi'\vert_{\langle 2 \rangle}$ is a skew morphism of $\ZZ_{2^{e-2}}$, the assumption of minimality implies that the assertion is true for $\varphi'\vert_{\langle 2 \rangle}$, and hence the actions of $\varphi$ and $\psi$ on $\langle 8 \rangle$ are the same.

Next, recall that $\ord(\varphi)=\ord(\psi)=2\ord(\varphi')\geq 4$, and so by Lemma~\ref{lem:half2} we find that the orders of $\varphi\vert_{\langle 2\rangle}$ and $\psi\vert_{\langle 2\rangle}$ are at most $\ord(\varphi')$. Hence by Lemma~\ref{lem:2basic:a} we know that both $\ord(\varphi\vert_{\langle 2\rangle})$ and $\ord(\psi\vert_{\langle 2\rangle})$ are divisors of $\ord(\varphi')$. Then by Lemma~\ref{lem:dashpow} we deduce that for each positive integer $k$ and for each $x\in \ZZ_{2^e}$ we have $\varphi^{\pi_{\varphi}(x)}(2k)=\varphi^{\pi_{\varphi'}(\overline{x})}(2k)=\varphi^{\pi_{\psi}(x)}(2k)$, where $\overline{x}$ is $x$ modulo $2^{e-1}$. Next, since $\varphi$ and $\psi$ are equal when taken modulo $2^{e-1}$, it follows that $\psi(2)=\varphi(2)+K$ with $K\in \{ 0, 2^{e-1}\}$. Note that $e\geq 3$, and so $K$ is even. Then, since $\varphi^4$ and $\psi^4$ restricted to $\langle 2 \rangle$ are the same, we may apply Lemma~\ref{lem:plusK} to find that for any non-negative integer $i$ we have $\psi^{4i+1}(2)=\psi^{4i}(\psi(2))=\psi^{4i}(\varphi(2)+K)=\varphi^{4i}(\varphi(2)+K)=\varphi^{4i}(\varphi(2))+K=\varphi^{4i+1}(2)+K$. We will use this fact to show that $\varphi(4)=\psi(4)$. We have $\psi(4)=\psi(2)+\psi^{\pi_{\psi}(2)}(2)$, and by Lemma~\ref{lem:2power} we know that $\pi_{\psi}(2)=4j+1$ for some non-negative integer $j$. It follows that $\psi(4)=\psi(2)+\psi^{4i+1}(2)=\varphi(2)+K+\varphi^{4i+1}(2)+K=\varphi(2)+\varphi^{4i+1}(2)=\varphi(2)+\varphi^{\pi_{\psi}(2)}(2)=\varphi(2)+\varphi^{\pi_{\varphi}(2)}(2)=\varphi(4)$. We now proceed  to show that $\varphi$ and $\psi$ have the same action on $\langle 4 \rangle$. Recall that $\varphi$ and $\psi$ have the same action on $\langle 8 \rangle$, and so it suffice to show that $\varphi(4+8i)=\psi(4+8i)$ for all positive integers $i$. This follows easily as $\psi(4+8i) = \psi(4)+\psi^{\pi_{\psi}(4)}(8i) = \varphi(4)+\varphi^{\pi_{\psi}(4)}(8i) = \varphi(4)+\varphi^{\pi_{\varphi}(4)}(8i) = \varphi(4+8i)$.

Next, we will show that $\varphi^2$ and $\psi^2$ have same actions on $\langle 2 \rangle$. Note that  Lemma~\ref{lem:2basic:c} implies that both $\langle 2 \rangle$ and $\langle 4 \rangle$ are preserved by $\varphi$, and so $\varphi(2)$ is a multiple of $2$, but cannot be a multiple of $4$. It follows that $\varphi(2)=2+4\ell$ for some non-negative integer $\ell$, and $\psi(2)=2+4\ell+K$. Then $\psi^2(2) =\psi(2+4\ell+K)=\psi(2+4\ell)+K=\psi(2)+\psi^{\pi_{\psi}(2)}(4\ell)+K = \varphi(2)+K+\varphi^{\pi_{\psi}(2)}(4\ell)+K = \varphi(2)+\varphi^{\pi_{\varphi}(2)}(4\ell) =  \varphi(2+4\ell)= \varphi^2(2)$.
We proved that $\varphi$ and $\psi$ (and consequently also $\varphi^2$ and $\psi^2$) have the same action on $\langle 4 \rangle$, and so now it suffice to show that $\varphi^2(2+4i)=\psi^2(2+4i)$ for all positive integers $i$. Note that $\psi(2)\equiv \varphi(2) \mod{2^{e-1}}$, and so using Lemma~\ref{lem:sigma:a} and Lemma~\ref{lem:dashpow} we find that $\psi^2(2+4i)=\psi^2(2)+\psi^{\pi_{\psi}(\psi(2))+\pi_{\psi}(2)}(4i)=\varphi^2(2)+\varphi^{\pi_{\psi}(\psi(2))+\pi_{\psi}(2)}(4i)=\varphi^2(2)+\varphi^{\pi_{\varphi'}\left(\overline{\psi(2)}\right)+\pi_{\varphi}(2)}(4i)=\varphi^2(2)+\varphi^{\pi_{\varphi}(\varphi(2))+\pi_{\varphi}(2)}(4i)=\varphi^2(2+4i)$. (Here we also used the fact that $\pi_{\psi}(\psi(2))+\pi_{\psi}(2)$ is a multiple of $2$, which follows from Lemma~\ref{lem:2power}).

As the last step, we will show that $\varphi^4=\psi^4$. Note that by Lemma~\ref{lem:2basic:b} we know that $\varphi$  taken modulo $4$ is a skew morphism of $\ZZ_4$. Since all skew morphisms of $\ZZ_4$ have order at most $2$, it follows that $\varphi^2(1) \equiv  1 \pmod{4}$, and hence $\varphi^2(1)=1+4t$ for some positive integer $t$. It follows that $\psi^2(1)=1+4t+K$, and since $\varphi$ and $\psi$ have the same action on $\langle 4 \rangle$, it follows that $\psi^4(1)=\psi^2(\psi^2(1))=\psi^2(1+4t+K)=\psi^2(1+4t)+K=\psi^2(1)+\psi^{\pi_{\psi}(\psi(1))+\pi_{\psi}(1)}(4t)+K=\varphi^2(1)+K+\varphi^{\pi_{\psi}(\psi(1))+\pi_{\psi}(1)}(4t)+K = \varphi^2(1)+\varphi^{\pi_{\varphi}(\varphi(1))+\pi_{\varphi}(1)}(4t)=\varphi^2(1+4t)=\varphi^4(1)$. Then since $\varphi^4$ and $\psi^4$ have the same action on $\langle 2 \rangle$, it suffice to show that $\varphi^4(1+2i)=\psi^4(1+2i)$. We have $\psi^4(1+2i)=\psi^4(1)+\psi^{\pi_{\psi}(\psi^3(1))+\pi_{\psi}(\psi^2(1))+\pi_{\psi}(\psi(1))+\pi_{\psi}(1)}(2i)$. Note that by Lemma~\ref{lem:2power} we know that    the sum $\pi_{\psi}(\psi^3(1))+\pi_{\psi}(\psi^2(1))+\pi_{\psi}(\psi(1))+\pi_{\psi}(1)$ must be congruent to $0$ modulo $4$, and hence $\psi^{\pi_{\psi}(\psi^3(1))+\pi_{\psi}(\psi^2(1))+\pi_{\psi}(\psi(1))+\pi_{\psi}(1)}(2i)=\varphi^{\pi_{\psi}(\psi^3(1))+\pi_{\psi}(\psi^2(1))+\pi_{\psi}(\psi(1))+\pi_{\psi}(1)}(2i)$. It follows that $\psi^4(1+2i)=\psi^4(1)+\psi^{\pi_{\psi}(\psi^3(1))+\pi_{\psi}(\psi^2(1))+\pi_{\psi}(\psi(1))+\pi_{\psi}(1)}(2i)=\varphi^4(1)+\varphi^{\pi_{\psi}(\psi^3(1))+\pi_{\psi}(\psi^2(1))+\pi_{\psi}(\psi(1))+\pi_{\psi}(1)}(2i)=\varphi^4(1)+\varphi^{\pi_{\varphi}(\varphi^3(1))+\pi_{\varphi}(\varphi^2(1))+\pi_{\varphi}(\varphi(1))+\pi_{\varphi}(1)}(2i)=\varphi^4(1+2i)$. 
\end{proof}

We are now ready to prove the main theorem of this section.

\begin{theorem}\label{thm:max4}
Let $e$ be a positive integer, let $\varphi'$ be a skew morphism of $\ZZ_{2^{e-1}}$, and let $x_1$ and $x_2$ be elements of $\ZZ_{2^e}$ such that $x_1 \equiv \varphi'(1) \pmod{2^{e-1}}$ and $x_2 \equiv \varphi'(2) \pmod{2^{e-1}}$. Then there is at most one skew morphism $\varphi$ of $\ZZ_{2^e}$ such that $\varphi(1)=x_1$, $\varphi(2)=x_2$, and $\varphi$ taken modulo $2^{e-1}$ is equal to $\varphi'$. In particular, there are at most $4$ skew morphisms of $\ZZ_{2^e}$ that are equal to $\varphi'$ when taken modulo $2^{e-1}$.
\end{theorem}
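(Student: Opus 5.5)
Suppose $\varphi$ and $\psi$ are two skew morphisms of $\ZZ_{2^e}$ that both reduce to $\varphi'$ modulo $2^{e-1}$ and satisfy $\varphi(1)=\psi(1)=x_1$ and $\varphi(2)=\psi(2)=x_2$. The plan is to show $\varphi=\psi$. The ``in particular'' part then follows: $x_1$ and $x_2$ each have exactly two lifts modulo $2^{e}$ of the prescribed residues modulo $2^{e-1}$, so there are at most four pairs $(x_1,x_2)$. The small cases $e\le 2$ can be checked directly, and we may assume $\varphi'$ is non-trivial where needed.

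By Lemma~\ref{lem:skewfromorbit}, a skew morphism of $\ZZ_{2^e}$ is determined by its power function together with the orbit of $1$. Lemma~\ref{lem:dashpow} gives $\pi_\varphi(x)\equiv\pi_\psi(x) \pmod{\ord(\varphi')}$ for all $x$. So we must control both the orbit of $1$ and the power function modulo the full order.

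\emph{Step 1: orbit of $1$.} If $\ord(\varphi)\le 2$ or $\ord(\psi)\le 2$, Proposition~\ref{prop:small} already settles everything, so assume both orders are at least $4$. Then Lemma~\ref{lem:half1} gives $\ord(\varphi)=\ord(\psi)=2\ord(\varphi')=:2m$. By Proposition~\ref{prop:actions}, $\varphi^4=\psi^4$. The orbit of $1$ under $\varphi$ therefore consists of the four sequences $\varphi^{4i+j}(1)=\varphi^{4i}(\varphi^j(1))$ for $j=0,1,2,3$. Here $\varphi(1)=\psi(1)=x_1$, so the $j=0,1$ parts agree with those of $\psi$. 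For $j=2,3$, expand $\varphi^2(1)=\varphi(x_1)$ and compare it with $\psi(x_1)$. Write $x_1=1+2a$; then $\varphi(1+2a)=x_1+\varphi^{\pi_\varphi(1)}(2a)$, where $\pi_\varphi(1)$ is odd. Since $\varphi^2$ and $\psi^2$ agree on $\langle 2\rangle$, it suffices to compare $\varphi(2a)$ and $\psi(2a)$. Write $2a=2+4b$. Then $\varphi(2+4b)=x_2+\varphi^{\pi_\varphi(2)}(4b)$ and $\psi(2+4b)=x_2+\psi^{\pi_\psi(2)}(4b)$, and $\varphi,\psi$ agree on $\langle 4\rangle$. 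So these two values coincide, provided $\varphi^{\pi_\varphi(2)}$ and $\varphi^{\pi_\psi(2)}$ agree on $\langle4\rangle$. This holds because the orbits in $\langle 4\rangle$ have length dividing $m$ (Lemma~\ref{lem:half2} applied twice), together with Lemma~\ref{lem:dashpow}. Hence $\varphi(2)=\psi(2)$ propagates to $\varphi$ and $\psi$ agreeing on all of $\langle 2\rangle$. Then $\varphi^2(1)=\psi^2(1)$, and applying the same argument once more gives $\varphi^3(1)=\psi^3(1)$. So the orbits of $1$ coincide.

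\emph{Step 2: power functions.} With $\varphi$ and $\psi$ agreeing on $\langle 2\rangle$ and on the orbit of $1$, compute $\varphi(1+2k)=x_1+\varphi^{\pi_\varphi(1)}(2k)$, and likewise for $\psi$. Because even elements lie in orbits of length dividing $m$, and $\pi_\varphi(1)\equiv\pi_\psi(1)\pmod m$, these values agree. Hence $\varphi=\psi$ as permutations, which is all we need; equality of the power functions then follows automatically.

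The main obstacle is Step 1: showing that agreement of $\varphi(2)$ and $\psi(2)$ propagates to the whole of $\langle 2\rangle$. This requires exponents that agree only modulo $m$ to give the same action on even elements. The tool is the orbit-length bound of Lemma~\ref{lem:half2} combined with Lemma~\ref{lem:dashpow}, and some care is needed to cover the elements $2+4b$ rather than just $\langle 4\rangle$.
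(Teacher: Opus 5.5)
Your proof is correct and follows the paper's argument. You get agreement on $\langle 4\rangle$ from Proposition~\ref{prop:actions}, extend it to all elements $2+4b$ using that even orbits have length dividing $\ord(\varphi')$ (Lemma~\ref{lem:half2}) together with $\pi_\varphi\equiv\pi_\psi \pmod{\ord(\varphi')}$ (Lemma~\ref{lem:dashpow}), and then finish on the odd elements $1+2k$, exactly as the paper does.

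The orbit-of-$1$ detour in Step~1 is superfluous, since Step~2 gives $\varphi=\psi$ directly once agreement on $\langle 2\rangle$ is known. This includes the appeal to Lemma~\ref{lem:skewfromorbit} and the choice $2a=2+4b$, which only covers the case $x_1\equiv 3 \pmod 4$.
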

\begin{proof}
For $\ord(\varphi')\leq 2$ the assertion follows from Proposition~\ref{prop:small}, and so we may assume that $\ord(\varphi')$ is at least $4$. Suppose to the contrary that there are at least five distinct skew morphisms of $\ZZ_{2^e}$ that are all equal when taken modulo $2^{e-1}$. Note that for these skew morphisms there are two possible images of $1$ and two possible images of $2$, and so by the pigeonhole principle we find that for at least two of these skew morphisms, say $\varphi$ and $\psi$, we must have $\varphi(1)=\psi(1)$ and $\varphi(2)=\psi(2)$. Since $\ord(\varphi')\geq 4$ by Lemma~\ref{lem:half1} we find that $\ord(\varphi)=\ord(\psi)=2\ord(\varphi')$. Moreover, by Proposition~\ref{prop:actions} we know that $\varphi$ and $\psi$ have same actions on $\langle 4 \rangle$, $\varphi^2$ and $\psi^2$ have same actions on $\langle 2 \rangle$, and $\varphi^4=\psi^4$. Let $i$ be a positive integer, and note that $\varphi(2+4i)=\varphi(2)+\varphi^{\pi_{\varphi}(2)}(4i)=\psi(2)+\varphi^{\pi_{\varphi}(2)}(4i)$. Recall that $\ord(\varphi)=\ord(\psi)=2\ord(\varphi')$, and so we may reproduce the argument from the proof of Proposition~\ref{prop:actions} to deduce that for each non-negative integer $i$ we have $\varphi^{\pi_{\varphi}(2)}(4i)=\varphi^{\pi_{\psi}(2)}(4i)$. Since the actions of $\varphi$ and $\psi$ on $\langle 4 \rangle$ are equal, this can be further rewritten as $\psi^{\pi_{\psi}(2)}(4i)$, and it follows that $\varphi(2+4i)=\psi(2)+\psi^{\pi_{\psi}(2)}(4i)=\psi(2+4i)$. Therefore $\varphi$ and $\psi$ have same actions on $\langle 2 \rangle$. Similarly, we have $\varphi(1+2i)=\varphi(1)+\varphi^{\pi_\varphi(1)}(2i)=\varphi(1)+\varphi^{\pi_\psi(1)}(2i)=\psi(1)+\psi^{\pi_\psi(1)}(2i)$, and so $\varphi$ and $\psi$ must be equal, contradiction.
\end{proof}

The following lemma shows that for each skew morphism $\varphi$ of $\ZZ_{2^e}$ with $\ord(\varphi)\geq 8$ we can construct three new skew morphisms of $\ZZ_{2^e}$ such that all of them (including $\varphi$) are equal when taken modulo $2^{e-1}$.

\begin{lemma}\label{lem:four}
Let $\varphi$ be a skew morphism of $\ZZ_{2^e}$ of order at least $8$, and let $K=2^{e-1}$. Also for each $x\in \ZZ_{2^e}$ let $\overline{x}=x\bmod 4$, and let $\alpha$, $\beta$, and $\gamma$ be permutations of $\ZZ_{2^e}$ defined as follows:
\begin{align*}
\alpha(x) &= 
\begin{cases} \varphi(x)+K, &  \text{if $\overline{x} \in \{1,3\}$,}\\
\varphi(x), & \text{if $\overline{x} \in \{0,2\}$,}\end{cases} \\
\beta(x) &= 
\begin{cases} \varphi(x)+K, &  \text{if $\overline{x} \in \{2,3\}$,}\\
\varphi(x), & \text{if $\overline{x} \in \{0,1\}$,}\end{cases} \\
\gamma(x) &= 
\begin{cases} \varphi(x)+K, &  \text{if $\overline{x} \in \{1,2\}$,}\\
\varphi(x), & \text{if $\overline{x} \in \{0,3\}$.}\end{cases}
\end{align*}
Then $\alpha$, $\beta$, and $\gamma$ are all skew morphisms of $\ZZ_{2^e}$ of order $\ord(\varphi)$. Moreover, if $\varphi$ taken modulo $4$ is equal to $(1,3)$ and $\pi_{\varphi}(1)$ is congruent to $3$ modulo $4$, then $\alpha$, $\beta$, and $\gamma$ have the same power function as $\varphi$. In all other cases $\alpha$ has the same power function as $\varphi$, and power functions of $\beta$ and $\gamma$ are for each $x\in \ZZ_{2^e}$ given by $\pi_{\beta}(x)=\pi_{\gamma}(x)=\pi_{\varphi}(x)$ if $x$ is even, and by $\pi_{\beta}(x)=\pi_{\gamma}(x)=\pi_{\varphi}(x)+\ord(\varphi)/2$ if $x$ is odd. 
\end{lemma}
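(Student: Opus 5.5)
The plan is to verify the defining identity directly, treating $\alpha$, $\beta$ and $\gamma$ uniformly. Write $n=\ord(\varphi)\ge 8$. Each of the three maps has the form $\rho(x)=\varphi(x)+K f(\overline{x})$ for a function $f\colon \ZZ_4\to\{0,1\}$. Here $f$ is the indicator of $\{1,3\}$, $\{2,3\}$ or $\{1,2\}$ respectively, and $K f$ is read in $\{0,K\}$.

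\textbf{Step 1: powers of $\rho$.} By Lemma~\ref{lem:plusK}, $\varphi^i(x+K)=\varphi^i(x)+K$. Since $\varphi$ preserves cosets of $\langle K\rangle$, $\rho$ is a permutation fixing $0$. An easy induction gives
\[ \rho^i(x)=\varphi^i(x)+K\,g_i(x), \qquad g_i(x)=\sum_{j=0}^{i-1} f\bigl(\overline{\varphi^j(x)}\bigr) \bmod 2. \]
Because $K\equiv 0\pmod 4$ (as $2^e>n\ge 8$ by Theorem~\ref{thm:orderofskew}), the residues $\overline{\varphi^j(x)}$ are governed by $\varphi$ taken modulo $4$. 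By Lemma~\ref{lem:2basic:b} this is either the identity or $(1,3)$, so $g_i(x)$ depends only on $\overline{x}$ and on $i \bmod 2$ (for $i\ge 1$). In particular $g_n\equiv 0$, since $n$ is divisible by $4$. Hence $\rho^n=\varphi^n=\mathrm{id}$. Moreover, $\rho$ taken modulo $2^{e-1}$ equals $\varphi$ taken modulo $2^{e-1}$, which has order $n/2$ by Lemma~\ref{lem:half1}. So $\ord(\rho)\in\{n/2,n\}$. I would rule out $n/2$ by evaluating $\rho^{n/2}(1)=\varphi^{n/2}(1)+K g_{n/2}(1)=1+K\ne 1$, using $g_{n/2}\equiv 0$ since $n/2$ is also divisible by $4$.

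\textbf{Step 2: a key auxiliary fact.} The automorphism $\varphi^{n/2}$ (it has order $2$, hence is an automorphism) sends $1\mapsto 1+K$ by Lemma~\ref{lem:half1}. It is therefore multiplication by $1+K$, i.e.
\[ \varphi^{n/2}(y)=y+K\,[y \text{ odd}]. \]
Consequently, for a candidate power function $\pi_\rho(x)=\pi_\varphi(x)+\varepsilon(x)\,n/2$ with $\varepsilon(x)\in\{0,1\}$, we get
\[ \rho^{\pi_\rho(x)}(y)=\varphi^{\pi_\varphi(x)}(y)+K\bigl(g_{\pi_\varphi(x)}(y)+\varepsilon(x)[y\text{ odd}]\bigr). \]
This uses that $\pi_\rho(x)$ and $\pi_\varphi(x)$ have the same parity, because $n/2$ is even.

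\textbf{Step 3: reduction to a finite check.} Subtracting $\varphi(x+y)=\varphi(x)+\varphi^{\pi_\varphi(x)}(y)$, the identity $\rho(x+y)=\rho(x)+\rho^{\pi_\rho(x)}(y)$ reduces to the congruence
\[ f(\overline{x+y})\equiv f(\overline{x})+g_{\pi_\varphi(x)}(y)+\varepsilon(x)[y\text{ odd}] \pmod 2. \]
All terms depend only on $\overline{x}$ and $\overline{y}$, on whether $\varphi$ taken modulo $4$ is trivial, and on $\pi_\varphi(x)\bmod 4$. By Lemma~\ref{lem:2power}, $\pi_\varphi(x)\equiv 1\pmod 4$ for even $x$, and for odd $x$ it is constantly $1$ or constantly $3$ modulo $4$. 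Since $\pi_\varphi(x)$ is odd, $g_{\pi_\varphi(x)}(y)$ reduces to $f(\overline{y})$ plus an even number of further terms; I would compute these explicitly in each case. This is then a $4\times 4$ table check for each of $f$ and each of the three cases for $(\varphi \bmod 4,\ \pi_\varphi(\text{odd})\bmod 4)$.

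The choice of $\varepsilon$ is as in the statement. We take $\varepsilon\equiv 0$ for $\alpha$ always, and for all three maps in the special case ($\varphi\bmod 4=(1,3)$ and $\pi_\varphi(1)\equiv 3\pmod 4$). Otherwise we take $\varepsilon(x)=[x\text{ odd}]$ for $\beta$ and $\gamma$. The main obstacle is this bookkeeping: correctly computing $g_{\pi}(y)$ when $\varphi$ swaps $1$ and $3$ modulo $4$ and $\pi\equiv 3\pmod 4$. There three iterates contribute, which is exactly what changes which $\varepsilon$ works. I would tabulate this case first, since it is where the exceptional behaviour arises.

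Finally, once the identity holds with $\pi_\rho$ as given, $\rho$ is a skew morphism. Its power function, reduced into $\{1,\dots,n-1\}$, is the stated one, by uniqueness of $\pi_\rho$ given $\ord(\rho)=n$.
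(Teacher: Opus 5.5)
Your proposal is correct and follows essentially the same route as the paper. You write each map as $\varphi$ plus a $K$-correction depending on the residue mod $4$, and rewrite both sides of the skew-morphism identity in terms of $\varphi$, $\pi_\varphi$ and $K$; your observation that $\varphi^{n/2}$ is multiplication by $1+K$ neatly packages the paper's separate use of Lemma~\ref{lem:half2} and Lemma~\ref{lem:half1}. You then finish with a finite parity check, which does work out with your choice of $\varepsilon$.

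One slip needs fixing: $g_i(x)$ depends on $i \bmod 4$, not $i \bmod 2$. For instance, for $\beta$ with $\varphi \bmod 4=(1,3)$ one has $g_1(1)=0\neq 1=g_3(1)$. So in Step~2 the justification must be $\pi_\rho(x)\equiv\pi_\varphi(x)\pmod 4$, which holds because $4\mid n/2$. This, like $g_n\equiv g_{n/2}\equiv 0$, is exactly where $\ord(\varphi)\geq 8$ is used, as in the paper.
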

\begin{proof}
First, we note that $\alpha$, $\beta$, and $\gamma$ are all well defined permutations of $\ZZ_{2^e}$ and $\alpha(0)=\beta(0)=\gamma(0)=\varphi(0)=0$. Hence to prove that these permutations are skew morphisms it is sufficient to prove that for each $\delta \in \{\alpha,\beta,\gamma \}$ we have $\delta(x+y)=\delta(x)+\delta^{\pi_\delta(x)}(y)$. For this, we must look at all possible residue classes of $x$ and $y$ taken modulo $4$. For specific residue classes (and a specific choice of $\delta$) we clearly know the exact values of $\delta(x+y)$ and $\delta(x)$. If we also specify $\varphi$ taken modulo $4$ and $\pi_{\varphi}(1)$ modulo $4$, we will find whether $\pi_{\delta}(x)$ is equal to $\pi_\varphi(x)$ or $\pi_\varphi(x)+\ord(\varphi)/2$. 

Note that if $y$ modulo $4$ is fixed by $\varphi$ taken modulo $4$, then for each positive integer $i$ we have $\delta^i(y)\equiv y\pmod{4}$. It follows that if $\delta(y)=\varphi(y)$, then $\delta^{\pi_\delta(x)}(y)=\varphi^{\pi_\delta(x)}(y)$. On the other hand, if $\delta(y)=\varphi(y)+K$, then $\delta^2(y)=\delta(\varphi(y)+K)=\varphi(\varphi(y)+K)+K$, and by Lemma~\ref{lem:plusK} it follows that $\delta^2(y)=\varphi^2(y)+K+K=\varphi^2(y)$. By Lemma~\ref{lem:2power} we know that $\pi_\varphi(x)$ is odd, and as $\ord(\varphi)$ is at least $8$, we deduce that $\pi_\delta(x)$ is also always odd. Then it follows easily that $\delta^{\pi_\delta(x)}(y)=\varphi^{\pi_\delta(x)}(y)+K$. 

If $y$ modulo $4$ is not fixed by $\varphi$ taken modulo $4$, then $\varphi$ modulo $4$ must be $(1,3)$. Moreover, $y$ modulo $4$ is either $1$ or $3$ and these residue classes are swapped by $\varphi$. In this case it can be easily checked that $\delta^4(y)=\varphi^4(y)$, and the values $\delta(y)$, $\delta^2(y)$ and $\delta^3(y)$ are determined by $y \bmod{4}$ and the choice of $\delta$. For this reason we need to know the values of $\pi_{\delta}$ taken modulo $4$. Since $\ord(\varphi)$ is at least $8$, we deduce that for each $x\in \ZZ_{2^e}$ we have $\pi_{\delta}(x)\equiv \pi_{\varphi}(x) \pmod{4}$. Hence by Lemma~\ref{lem:2power} we deduce that if $x$ is even, then $\pi_{\delta}(x)$ is congruent to $1$ modulo $4$ and if $x$ is odd, then $\pi_{\delta}(x)$ is congruent to either $1$ or $3$, depending on whether $\pi_{\varphi}(x)$ is congruent to $1$ or $3$.  

Finally, note that in the case when $\pi_{\delta}$ is not equal to $\pi_{\varphi}$, then their values differ only for odd inputs, in which case we have $\pi_{\delta}(x)=\pi_{\varphi}(x)+\ord(\varphi)/2$. By Lemma~\ref{lem:half2} we know that the action of $\varphi$ on $\langle 2 \rangle$ has order at most $\ord(\varphi)/2$, and so if $y$ is even, then $\varphi^{\pi_\delta(x)}(y)=\varphi^{\pi_\varphi(x)+\ord(\varphi)/2}(y)=\varphi^{\pi_\varphi(x)}(y)$. If $y$ is odd, then $y$ generates $\ZZ_{2^e}$ and the proof of Lemma~\ref{lem:half1} can be easily reproduced to show that $\varphi^{\ord(\varphi)/2}(y)=K+y$. Then by Lemma~\ref{lem:plusK} we find that $\varphi^{\pi_\delta(x)}(y)=\varphi^{\pi_\varphi(x)+\ord(\varphi)/2}(y)=\varphi^{\pi_\varphi(x)}(y+K)=\varphi^{\pi_\varphi(x)}(y)+K$.       

We have shown that all parts of the equation $\delta(x+y)=\delta(x)+\delta^{\pi_\delta(x)}(y)$ can be rewritten using only $\varphi$, $\pi_{\varphi}$, and $K$. In particular, the left hand side is equal to either $\varphi(x+y)$ or $\varphi(x+y)+K$, while the right hand side is equal to either $\varphi(x)+\varphi^{\pi_\varphi(x)}(y)$ or $\varphi(x)+\varphi^{\pi_\varphi(x)}(y)+K$. Since clearly $\varphi(x+y)=\varphi(x)+\varphi^{\pi_\varphi(x)}(y)$, the equality holds if and only if the number of `$+K$' terms on both sides of the equation is equal. It is now a straightforward but tedious exercise to verify the equality for each $\delta \in \{\alpha, \beta, \gamma\}$, each pair of residue classes of $x$ and $y$ modulo $4$, each value of $\pi_{\varphi}(1)$ modulo $4$, and each choice of $\varphi$ taken modulo $4$. For example, let $\delta=\beta$, let $(x,y)$ be congruent to $(1,3)$ modulo $4$, and let $\varphi$ taken modulo $4$ be the identity permutation of $\ZZ_4$. (As $\varphi$ taken modulo $4$ is the identity permutation, we do not need to specify $\pi_{\varphi}(1)$ modulo $4$.) Then $x+y\equiv 0 \pmod{4}$, and hence $\beta(x+y)=\varphi(x+y)$. We also have $x\equiv 1 \pmod{4}$, and hence $\beta(x)=\varphi(x)$. Next, note that $x$ is odd, and hence $\pi_{\beta}(x)=\pi_{\varphi}(x)+\ord(\varphi)/2$. Then, since $\varphi$ taken modulo $4$ is the identity of $\ZZ_4$ and $y\equiv 3 \pmod{4}$, we have $\beta^{\pi_{\beta}(x)}(y)=\varphi^{\pi_{\beta}(x)}(y)+K=\varphi^{\pi_{\varphi}(x)+\ord{\varphi}/2}(y)+K$. Finally, since $y$ is odd, this can be further rewritten as $\varphi^{\pi_{\varphi}(x)}(y)+K+K=\varphi^{\pi_{\varphi}(x)}(y)$, and it follows that $\beta(x)+\beta^{\pi_{\beta}(x)}(y)=\varphi(x)+\varphi^{\pi_{\varphi}(x)}(y)=\varphi(x+y)=\beta(x+y)$.

It remains to verify that $\ord(\alpha) = \ord(\beta) = \ord(\gamma) = \ord(\varphi)$. Let $\varphi'$ be $\varphi$ taken modulo $2^{e-1}$. Since the order of $\varphi$ is at least $8$, it follows by Lemma~\ref{lem:half1} that $\ord(\varphi')$ is at least $4$. Also note that $\alpha$, $\beta$, $\gamma$, and $\varphi$ are all equal to $\varphi'$, and so (again by Lemma~\ref{lem:half1}) we find that $\ord(\alpha)=\ord(\beta)=\ord(\gamma)=\ord(\varphi)=2\ord(\varphi')$.
\end{proof}

Let $\varphi'$ be a skew morphism of $\ZZ_{2^e}$ such that $\ord(\varphi')\geq 4$. Lemma~\ref{lem:four} together with Theorem~\ref{thm:max4} shows that if there exists a skew morphism $\varphi$ of $\ZZ_{2^e}$ such that $\varphi$ taken modulo $2^{e-1}$ is equal to $\varphi'$, then there are exactly four skew morphisms of $\ZZ_{2^e}$ that are equal to $\varphi'$ when taken modulo $2^{e-1}$. In the following section we prove the existence of $\varphi$.

\section{Proof of Theorem~\ref{thm:main}: Existence}\label{sec:exi}

In this section we finish the proof of Theorem~\ref{thm:main}. We first give the following useful observation.

\begin{lemma}\label{lem:samesame}
Let $\varphi$ and $\psi$ be a pair of skew morphisms of $\ZZ_{2^e}$ that are equal to each other when taken modulo $2^{e-1}$, and equal to the identity permutation of $\ZZ_4$ when taken modulo $4$. If $\varphi(1)=\psi(1)$,  then the orbits of $\langle\varphi\rangle$ and $\langle\psi\rangle$ that contain $1$ are equal.
\end{lemma}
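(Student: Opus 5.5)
The plan is to show the stronger statement that $\varphi$ and $\psi$ agree on the whole coset $1+\langle 4\rangle$. Since both $\varphi$ and $\psi$ taken modulo $4$ are the identity of $\ZZ_4$, the orbits of $\langle\varphi\rangle$ and $\langle\psi\rangle$ containing $1$ lie entirely inside $1+\langle 4\rangle$. An induction on $i$ then gives $\varphi^{i}(1)=\psi^{i}(1)$ for all $i$, which is the claim.

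First I dispose of the degenerate cases. If $\ord(\varphi)\le 2$ and $\ord(\psi)\le 2$, both are automorphisms of $\ZZ_{2^e}$, so $\varphi(1)=\psi(1)$ forces $\varphi=\psi$. If exactly one of the orders is at most $2$, then $\ord(\varphi')\le 2$. In that case Proposition~\ref{prop:small} and its proof list all candidates. The only lifts that are the identity modulo $4$ are the identity and multiplication by $2^{e-1}+1$, and these have distinct images of $1$. Hence this case does not arise, or is covered by the previous one. In the main case $\ord(\varphi')\ge 2$ and both orders are at least $4$, so Lemma~\ref{lem:half1} gives $\ord(\varphi)=\ord(\psi)=2\ord(\varphi')$.

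For the main step take $x=1+4t$. Then
\[\varphi(x)=\varphi(1)+\varphi^{\pi_\varphi(1)}(4t),\qquad \psi(x)=\psi(1)+\psi^{\pi_\psi(1)}(4t).\]
By Proposition~\ref{prop:actions}, $\varphi$ and $\psi$ act identically on $\langle 4\rangle$, so $\psi^{\pi_\psi(1)}(4t)=\varphi^{\pi_\psi(1)}(4t)$. By Lemma~\ref{lem:dashpow}, $\pi_\varphi(1)\equiv\pi_\psi(1)\equiv\pi_{\varphi'}(1)\pmod{\ord(\varphi')}$. By Lemma~\ref{lem:half2} together with Lemma~\ref{lem:2basic:a}, the order of $\varphi|_{\langle 2\rangle}$ is a power of $2$ smaller than $\ord(\varphi)=2\ord(\varphi')$, hence it divides $\ord(\varphi')$. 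Therefore $\varphi^{\pi_\varphi(1)}(4t)=\varphi^{\pi_\psi(1)}(4t)$. This is exactly the argument already used in the proofs of Proposition~\ref{prop:actions} and Theorem~\ref{thm:max4}. Combined with $\varphi(1)=\psi(1)$, we get $\varphi(x)=\psi(x)$ for all $x\equiv 1\pmod 4$.

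Finally, $\varphi$ maps $1+\langle 4\rangle$ into itself because it is the identity modulo $4$. Induction then gives $\varphi^{i+1}(1)=\varphi(\varphi^i(1))=\psi(\psi^i(1))=\psi^{i+1}(1)$, so the two orbits coincide.

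The main obstacle is the comparison of the exponents $\pi_\varphi(1)$ and $\pi_\psi(1)$, which need not be equal. It is resolved by the order bound on $\langle 2\rangle$, and that bound is precisely why the equal-order situation $\ord(\varphi)=\ord(\psi)=2\ord(\varphi')$ must be secured first.
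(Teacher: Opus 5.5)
Your proof is correct in substance and takes a different route from the paper, but one sentence in your degenerate case is false (repaired below).

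\textbf{Comparison with the paper.} The paper does not prove directly that $\varphi$ and $\psi$ agree on $1+\langle 4\rangle$; it classifies instead.
\begin{enumerate}
\item For $e\le 2$ it checks a list.
\item When both orders are at most $4$, it uses Lemma~\ref{lem:skeworder4} to see that the proper map is $\varphi_1$ or $\varphi_2$, and compares orbits with the automorphisms sending $1$ to the same element.
\item When the order is at least $8$, it combines Theorem~\ref{thm:max4} with Lemma~\ref{lem:four} to conclude that the lifts of $\varphi'$ are exactly $\varphi,\alpha,\beta,\gamma$. Hence $\psi\in\{\varphi,\beta\}$, and $\beta$ agrees with $\varphi$ on $1+\langle 4\rangle$ by construction.
\end{enumerate}
You replace all of this by the single computation $\psi(1+4t)=\psi(1)+\psi^{\pi_\psi(1)}(4t)=\varphi(1)+\varphi^{\pi_\psi(1)}(4t)=\varphi(1)+\varphi^{\pi_\varphi(1)}(4t)=\varphi(1+4t)$. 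It uses only Proposition~\ref{prop:actions}, Lemma~\ref{lem:dashpow}, and the order bound from Lemmas~\ref{lem:half1} and~\ref{lem:half2}.

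This treats order $4$ and order $\ge 8$ uniformly and needs no small-case lists. It also avoids Lemma~\ref{lem:four}, whose verification is the long case check. It shows, too, that the identity-modulo-$4$ hypothesis is needed only to keep the orbit of $1$ inside $1+\langle 4\rangle$. The paper's route additionally yields the explicit identification $\psi\in\{\varphi,\beta\}$, but the lemma does not need it.

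\textbf{The false sentence.} For $e\ge 4$ it is not true that the only lifts of an order-$\le 2$ map $\varphi'$ that are the identity modulo $4$ are the identity and $x\mapsto(2^{e-1}+1)x$. Take $\varphi'$ to be $x\mapsto(2^{e-2}+1)x$. All four of its lifts are the identity modulo $4$: multiplication by $2^{e-2}+1$, multiplication by $2^{e-1}+2^{e-2}+1$, and $\varphi_1,\varphi_2$ from Lemma~\ref{lem:skeworder4}.

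This does not break your proof. These lifts have order $4$, so they fall into your main case, where you correctly allowed $\ord(\varphi')=2$. But the justification should be repaired in one of two ways:
\begin{enumerate}
\item Suppose, say, $\ord(\psi)\le 2$. Then $\psi$ is $x\mapsto sx$ with $s^2\equiv 1$ and $s\equiv 1\pmod 4$, so $s\in\{1,2^{e-1}+1\}$. Hence $\varphi'$ is the identity, and Proposition~\ref{prop:small} forces $\ord(\varphi)\le 2$ as well, so the mixed case cannot occur.
\item Alternatively, drop the case split. Your main computation uses $\ord(\varphi)\ge 4$ only for the map whose exponent you change. So it suffices to call a proper one of the two maps $\varphi$; if neither is proper, both are automorphisms and the claim is immediate.
\end{enumerate}
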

\begin{proof}
Using~\cite{ConderList} it can be easily checked that the assertion is true for $e\leq 2$, and so we may assume that $e$ is at least $3$. If both $\varphi$ and $\psi$ are automorphisms, then the assertion follows easily from the fact that $\varphi(1)=\psi(1)$. Hence we may also assume that at least one of these skew morphisms, say $\varphi$, is proper. Next, suppose that the orders of both $\varphi$ and $\psi$ are at most $4$. Since $\varphi$ is a proper skew morphism and there are no proper skew morphisms of order at most $2$, it follows that $\ord(\varphi)=4$. Moreover, since $\varphi$ taken modulo $4$ is the identity permutation of $\ZZ_4$ and $e\geq 3$, by Lemma~\ref{lem:skeworder4} we deduce that $\varphi\in \{\varphi_1,\varphi_2\}$. In particular, the orbit of $\langle\varphi\rangle$ that contains $1$ is either $(1,1+2^{e-2},1+2^{e-1},1+2^{e-2}+2^{e-1}$ or $(1,1+2^{e-2}+2^{e-1},1+2^{e-1},1+2^{e-2}$. The orbits of automorphisms of $\ZZ_{2^e}$ such that $1$ is mapped to either $1+2^{e-2}$ or $1+2^{e-2}+2^{e-1}$ are also $(1,1+2^{e-2},1+2^{e-1},1+2^{e-2}+2^{e-1}$ or $(1,1+2^{e-2}+2^{e-1},1+2^{e-1},1+2^{e-2})$, and it follows that if $\psi(1)=\varphi(1)$, then the orbits of $\langle\varphi\rangle$ and $\langle\psi\rangle$ that contain $1$ are equal. Finally, suppose that the order of $\varphi$ or $\psi$ is at least $8$, and let $\varphi'$ denote the skew morphism of $\ZZ_{2^{e-1}}$ that is equal to both $\varphi$ and $\psi$ taken modulo $2^{e-1}$. Then by Lemma~\ref{lem:half1} we deduce that $\ord(\varphi')$ is at least $4$, and then (again by Lemma~\ref{lem:half1}) we have $\ord(\varphi)=\ord(\psi)=2\ord(\varphi')$. In particular, the orders of both $\varphi$ and $\psi$ are at least $8$. Note that by Theorem~\ref{thm:max4} we know that there are at most $4$ skew morphism of $\ZZ_{2^e}$ that are equal to $\varphi'$ when taken modulo $2^{e-1}$, and so by Lemma~\ref{lem:four} we find that these are exactly the skew morphism $\varphi$, $\alpha$, $\beta$, and $\gamma$. Then since $\varphi(1)=\psi(1)$, we must have $\psi\in \{\varphi,\beta \}$. If $\psi=\varphi$, then the assertion is trivially true, and so we may assume that $\psi=\beta$. Then, since $\psi$ taken modulo $4$ is the identity permutation and $\psi(x)=\varphi(x)$ for all $x\equiv 1 \pmod{4}$, we deduce that the orbit of $\langle\psi\rangle$ that contains $1$ is equal to the orbit of $\langle\varphi\rangle$ that contains $1$.
\end{proof}

We now prove that for each skew morphism $\varphi'$ of $\ZZ_{2^{e-1}}$ of order at least $4$ there exists a skew morphism of $\ZZ_{2^e}$ that is equal to $\varphi'$ when taken modulo $2^{e-1}$.

\begin{theorem}\label{thm:exi}
Let $\varphi'$ be a skew morphism of $\ZZ_{2^{e-1}}$ of order at least $4$, and let $x_1$ and $x_2$ be elements of $\ZZ_{2^e}$ such that $x_1 \equiv \varphi'(1) \pmod{2^{e-1}}$ and $x_2 \equiv \varphi'(2) \pmod{2^{e-1}}$. Then there exists a skew morphism $\varphi$ of $\ZZ_{2^e}$ such that $\varphi(1)=x_1$, $\varphi(2)=x_2$, and $\varphi$ taken modulo $2^{e-1}$ is equal to $\varphi'$.
\end{theorem}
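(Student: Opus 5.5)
The plan is to first reduce the statement to the existence of \emph{some} lift, and then to construct one lift by induction on $e$.

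\textbf{Reduction.} Suppose $\varphi$ is any skew morphism of $\ZZ_{2^e}$ with $\varphi$ taken modulo $2^{e-1}$ equal to $\varphi'$. Since $\ord(\varphi')\geq 4$, Lemma~\ref{lem:half1} gives $\ord(\varphi)=2\ord(\varphi')\geq 8$, so Lemma~\ref{lem:four} applies. The four permutations $\varphi,\alpha,\beta,\gamma$ are then skew morphisms, all equal to $\varphi'$ modulo $2^{e-1}$. Because $1\equiv 1$ and $2\equiv 2 \pmod 4$, their values on $1$ and $2$ are
\[
(\varphi(1),\varphi(2)),\quad (\varphi(1)+K,\varphi(2)),\quad (\varphi(1),\varphi(2)+K),\quad (\varphi(1)+K,\varphi(2)+K).
\]
These are exactly the four admissible choices of $(x_1,x_2)$. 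So it suffices to show that \emph{at least one} skew morphism of $\ZZ_{2^e}$ reduces to $\varphi'$ modulo $2^{e-1}$.

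\textbf{Construction of a lift.} I will build $\varphi$ from an orbit and a power function, guided by Lemma~\ref{lem:skewfromorbit} and Lemma~\ref{lem:quo}. Write $m=\ord(\varphi')$, so the target order is $2m$.

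\emph{The orbit of $1$.} Let $(1,a_1,\dots,a_{m-1})$ be the orbit of $\langle\varphi'\rangle$ containing $1$, and choose arbitrary lifts $\tilde a_i\in\ZZ_{2^e}$. Lemma~\ref{lem:half1} forces the orbit of $1$ in $\ZZ_{2^e}$ to be
\[
(1,\tilde a_1,\dots,\tilde a_{m-1},\,1+K,\,\tilde a_1+K,\dots,\tilde a_{m-1}+K).
\]

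\emph{The power function.} Lemma~\ref{lem:quo} says the power function is encoded by the derived skew morphism, which should live on $\ZZ_{2m}$ and reduce to $\overline{\varphi'}$ on $\ZZ_m$. Since $m<2^{e-1}$ by Theorem~\ref{thm:orderofskew}, induction on $e$ applies. Combined with Proposition~\ref{prop:small} when $\ord(\overline{\varphi'})\le 2$, it provides a skew morphism $\theta$ of $\ZZ_{2m}$ that reduces to $\overline{\varphi'}$. Among its (up to four) lifts, I will pick one with $\theta(1)\equiv 1\pmod 4$ or $\equiv 3\pmod 4$, matching Lemma~\ref{lem:2power}. Then set
\[
\pi(x)=\theta^{\,x}(1)\in\ZZ_{2m}.
\]

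\emph{The map.} Define $\varphi(x)$ by the formula of Lemma~\ref{lem:skewfromorbit}, with $\varphi^{j}(1)$ read off the orbit above. By Lemma~\ref{lem:dashpow} the reduction modulo $2^{e-1}$ is then automatically $\varphi'$.

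\textbf{Verification.} Three things remain to be checked.
\begin{enumerate}
\item $\varphi$ is well defined on $\ZZ_{2^e}$, i.e.\ the defining sum over a full period $2^e$ vanishes. This amounts to $\varphi(2^e)=0$.
\item $\varphi$ is a bijection. This follows from bijectivity of $\varphi'$ together with $\varphi(x+K)=\varphi(x)+K$, which can be arranged from the orbit shape.
\item The skew identity $\varphi(x+y)=\varphi(x)+\varphi^{\pi(x)}(y)$ holds. I would prove it by induction on $y$, reducing it to the compatibility $\pi(x+y)=\sigma(\pi(x),y)$. That compatibility is the skew property of $\theta$ via Lemma~\ref{lem:quo:a}.
\end{enumerate}
In each case the defect is a multiple of $K$, because everything already holds modulo $2^{e-1}$. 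The only freedom left is the choice of the lifts $\tilde a_i$ and of $\theta$, and these have to be used to kill that defect.

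\textbf{Main obstacle.} The hardest step is showing that the $K$-defects vanish simultaneously for one suitable choice of lifts, above all the closure condition $\varphi(2^e)=0$. I would first try to fix the $\tilde a_i$ by demanding that the orbit be consistent with $\varphi(x+1)-\varphi(x)=\varphi^{\pi(x)}(1)$ around the whole orbit. I would then show, using Lemma~\ref{lem:plusK} and the parity information of Lemma~\ref{lem:2power}, that the total defect is a homomorphic function of the choices. Changing a single $\tilde a_i$ by $K$ toggles the defect, so a valid choice exists.
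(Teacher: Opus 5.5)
Your reduction is correct: if one lift $\varphi$ exists, then $\ord(\varphi)=2\ord(\varphi')\ge 8$ by Lemma~\ref{lem:half1}, and the maps $\alpha,\beta,\gamma$ of Lemma~\ref{lem:four} realise the other three admissible pairs $(x_1,x_2)$. The gap is that you never establish that one lift exists, and that is the entire content of the theorem.

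Once you fix lifts $\tilde a_i$ and a lift $\theta$ of $\overline{\varphi'}$, the map given by the formula of Lemma~\ref{lem:skewfromorbit} must satisfy many conditions modulo $K$ at once:
\begin{itemize}
\item $\varphi(2^e)=0$;
\item $\varphi(x+K)=\varphi(x)+K$;
\item $\varphi(\tilde a_j)=\tilde a_{j+1}$ for every $j$, since otherwise the orbit you read the exponents from is not an orbit of $\varphi$;
\item the skew identity for all pairs.
\end{itemize}
You claim the total defect is a homomorphic function of the choices, so toggling one $\tilde a_i$ repairs it. This is unsupported and cannot work as stated. One toggle kills at most one $\ZZ_2$-valued defect, not all of them. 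The dependence is also not linear, because $\tilde a_j$ enters both as a summand and as an argument of $\varphi$. In fact, by Theorem~\ref{thm:max4}, at most four of the $2^{m-1}$ choices of the $\tilde a_i$ can succeed. So showing that this overdetermined system is solvable \emph{is} the theorem.

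The choice of $\theta$ is a genuine constraint as well. Your selection rule is vacuous, since for $m\ge 4$ the residue $\theta(1)\bmod 4$ is already fixed by $\overline{\varphi'}(1)$. When $\varphi'$ is $(1,3)$ modulo $4$ and $\pi_{\varphi'}(1)\equiv 3\pmod 4$, Lemma~\ref{lem:four} and Theorem~\ref{thm:max4} show that all lifts of $\varphi'$ share a single power function. Distinct lifts $\theta$, however, can have distinct orbits of $1$, so a wrong $\theta$ cannot be repaired by any choice of the $\tilde a_i$.

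Finally, your step (3) is circular. Lemma~\ref{lem:quo:a} is a property of the derived map of an actual skew morphism. The identity $\sigma(r,y)=\theta^{\,y}(r)$ that you need involves the orbits of all $y$ under the very map you are still constructing.

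The missing idea is to build the lift from the even part rather than from the orbit of $1$. The paper restricts to $\psi'=\varphi'\vert_{\langle 2\rangle}$, a skew morphism of $\ZZ_{2^{e-2}}$. It lifts $\psi'$ by induction on $e$ to a skew morphism $\psi$ of $\ZZ_{2^{e-1}}$ with prescribed values $\psi(1)=x_2/2$ and $\psi(2)=\psi(1)+\psi^{4i}(\psi(1))$, where $\pi_{\varphi'}(2)=1+4i$. This is well defined because $\psi^4$ does not depend on the choice of lift, by Proposition~\ref{prop:actions}; if $\ord(\psi')\le 2$ one takes $\psi$ to be multiplication by $x_2/2$.

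It then sets $\varphi(2k)=2\psi(k)$ and $\varphi(1+2k)=x_1+2\psi^{\pi_{\varphi'}(1)}(k)$. With this definition, bijectivity, $\varphi(0)=0$, the prescribed values at $1$ and $2$, and the reduction modulo $2^{e-1}$ are all immediate. The real work is in two steps:
\begin{itemize}
\item proving $\pi_\psi(x)\equiv\pi_{\varphi'}(2x)\pmod{\ord(\psi)}$, via Lemma~\ref{lem:samesame} applied to $\overline{\psi}$ and $\overline{\varphi'}^{\,2}$;
\item proving $\ord(\varphi)=2\ord(\varphi')$, by identifying $\varphi^2$ with a lift of $(\varphi')^2$ obtained from minimality of $\ord(\varphi')$ or from Proposition~\ref{prop:small}.
\end{itemize}
The second step is what makes the power function well defined. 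After that, the skew identity is checked case by case on parities.
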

\begin{proof}
Suppose to the contrary that the assertion is false, and let $\varphi'$ be a counterexample such that $e$ is minimal and $\ord(\varphi')$ is minimal among all counterexamples for the given $e$. To simplify the notation we will write $\pi'$ instead of $\pi_{\varphi'}$. Note that the smallest cyclic $2$-group that admits a skew morphism of order $4$ is $\ZZ_8$, and so $e$ is at least $4$. By Lemma~\ref{lem:2basic:c} we know that the subgroup $\langle 2 \rangle$ of $\ZZ_{2^{e-1}}$ is preserved by $\varphi'$, and it follows that $\varphi'\vert_{\langle 2\rangle}$ is a well-defined skew morphism of $\ZZ_{2^{e-2}}$. Denote this skew morphism of $\ZZ_{2^{e-2}}$ by $\psi'$ and note that $\varphi'(2k)=2\psi'(k)$ for each non-negative integer $k$. We proceed by considering two cases depending on whether the order of $\psi'$ is greater than $2$ or not. 

\medskip\noindent
\textbf{Case (1)}: $\ord(\psi')>2$
\medskip

By Lemma~\ref{lem:2basic:a} we know that $\ord(\psi')\geq 4$. Then since $e-2<e-1$, by the assumption of minimality of $e$ we know that for any choice of $y_1$ and $y_2$ from $\ZZ_{2^{e-1}}$ such that $y_1 \equiv \psi'(1) \pmod{2^{e-2}}$ and $y_2 \equiv \psi'(2) \pmod{2^{e-2}}$ there exists a skew morphism $\psi$ of $\ZZ_{2^{e-1}}$ such that $\psi(1)=y_1$, $\psi(2)=y_2$, and $\psi$ taken modulo $2^{e-2}$ is equal to $\psi'$. Note that by Proposition~\ref{prop:actions} the action of $\psi^4$ on $\ZZ_{2^{e-1}}$ is always the same regardless of the choice of $y_1$ and $y_2$. By Lemma~\ref{lem:2power} we know that $\pi'(2)=1+4i$ for some non-negative integer $i$. We set $y_1=x_2/2$ and $y_2=y_1+(\psi^4)^i(y_1)$, and in what follows we let $\psi$ be the unique skew morphism of $\ZZ_{2^{e-1}}$ such that $\psi(1)=y_1$, $\psi(2)=y_2$, and $\psi$ taken modulo $2^{e-2}$ is equal to $\psi'$. Note here that $2\psi(1)\equiv 2y_1 \equiv x_2 \equiv \varphi'(2) \equiv 2\psi'(1) \pmod{2^{e-2}}$ and $2\psi(2) \equiv 2y_2 \equiv 2y_1 + 2(\psi')^{4i}(y_1) \equiv x_2 + (\varphi')^{4i}(x_2) \equiv \varphi'(2)+ (\varphi')^{4i+1}(2) \equiv \varphi'(2)+ (\varphi')^{\pi'(2)}(2)\equiv \varphi'(4) \equiv 2\psi'(2) \pmod{2^{e-2}}$, and so indeed $\psi$ taken modulo $2^{e-2}$ is equal to $\psi'$.

\medskip\noindent
\textbf{Case (2)}: $\ord(\psi')\leq 2$
\medskip

Since $\psi'=\varphi'\vert_{\langle 2\rangle}$, it follows that $(\varphi')^2(2)=2$. By Lemma~\ref{lem:2power} we know that $\pi'(1)$ is odd, and hence $\varphi'(3)=\varphi'(1)+(\varphi')^{\pi'(1)}(2)=\varphi'(1)+\varphi'(2)$. On the other hand, we have $\varphi'(3)=\varphi'(2)+(\varphi')^{\pi'(2)}(1)$, and so $\varphi'(1)=(\varphi')^{\pi'(2)}(1)$. It follows that $\pi'(2)=1$, and if we write $\pi'(2)=1+4i$, then $i=0$. We claim that similar to the previous case there exists a skew morphism $\psi$ of $\ZZ_{2^{e-1}}$ such that $\psi(1)=x_2/2$, $\psi(2)=\psi(1)+(\psi^4)^i(y_1)$, and $\psi$ taken modulo $2^{e-2}$ is equal to $\psi'$. Note that $\psi(1)=x_2/2$ and $\psi(2)=x_2/2+(\psi^4)^0(x_2/2)=x_2/2+x_2/2=x_2$. It follows easily that we can simply let $\psi$ be the automorphism of $\ZZ_{2^{e-1}}$ given by multiplication by $x_2/2$. Note that the order of $\psi'$ is at most two, and hence $\psi'$ is an automorphism. Since $\psi$ is an automorphism, it follows that $\psi$ taken modulo $2^{e-2}$ is also an automorphism, and it maps $1$ to $x_2/2 \bmod{2^{e-2}}$. Since $x_2/2\equiv \varphi'(2)/2\equiv 2\psi'(1)/2 \equiv \psi'(1) \pmod{2^{e-2}}$, it follows that $\psi$ taken modulo $2^{e-2}$ and $\psi'$ are equal. 

We are now ready to define $\varphi$. For all $x\in \ZZ_{2^e}$ we let: 
\[ \varphi(x)=\begin{cases}
2\psi(k) &\text{ if } x=2k, \\
x_1+2\psi^{\pi'(1)}(k) &\text{ if } x=1+2k\, .\end{cases} \]
Note that $\varphi(1)=x_1+2\psi^{\pi'(1)}(0)=x_1$ and $\varphi(2)=2\psi(1)=2y_1=2(x_2/2)=x_2$. Moreover, we have $2\psi(k) \equiv 2\psi'(k) \equiv \varphi'(2k) \pmod{2^{e-1}}$ and $x_1+2\psi^{\pi'(1)}(k) \equiv \varphi'(1)+(\varphi')^{\pi'(1)}(2k)\equiv \varphi'(1+2k)\pmod{2^{e-1}}$, and so we deduce that $\varphi$ taken modulo $2^{e-1}$ is equal to $\varphi'$. Hence it suffice to prove that $\varphi$ is a skew morphism of $\ZZ_{2^e}$. As the first step, we investigate the power functions of $\varphi'$ and $\psi$. Since $\psi'$ is equal to $\psi$ taken modulo $2^{e-2}$, we know that $\ord(\psi)\in \{\ord(\psi'), 2\ord(\psi')\}$. Moreover, since $\ord(\varphi')$ is at least $4$, by Lemma~\ref{lem:half2} we have $\ord(\varphi')\geq 2\ord(\psi') \geq \ord(\psi)$, and so by Lemma~\ref{lem:2basic:a} we deduce that $\ord(\psi)$ is a divisor of $\ord(\varphi')$. We will show that for each positive integer $x$ we have $\pi_{\psi}(x) \equiv \pi'(2x) \pmod{\ord(\psi)}$. By Lemma~\ref{lem:2power} all power function values are odd, and it follows that the claim is trivially true if $\ord(\psi)\leq 2$. Hence we may assume that $\ord(\psi)$ is at least $4$, and so by Lemma~\ref{lem:half1} we have $\ord(\psi)=2\ord(\psi')$. Recall that $\psi(2)=\psi(1)+\psi^{4i}(\psi(1))=\psi(1)+\psi^{4i+1}(1)=\psi(1)+\psi^{\pi'(2)}(1)$. On the other hand, we have $\psi(2)=\psi(1)+\psi^{\pi_{\psi}(1)}(1)$, and so $\psi^{\pi'(2)}(1)=\psi^{\pi_{\psi}(1)}(1)$. By Theorem~\ref{thm:orderoforbit} we know that the orbit of $\langle\psi\rangle$ that contains $1$ has length $\ord(\psi)$, and hence $\pi'(2)\equiv \pi_{\psi}(1) \pmod{\ord(\psi)}$. Next, using Lemma~\ref{lem:quo:b} we deduce that $\pi_{\psi}(x) = \overline{\psi}^{\, x}(1)$ and $\pi'(2x) = \overline{\varphi'}^{\, 2x}(1) = \left(\overline{\varphi'}^{\, 2}\right)^x(1)$. It follows that to prove our claim we have to show that the orbits of $\overline{\psi}$ and $\overline{\varphi'}^{\, 2}$ (when taken modulo $\ord(\psi)$) that contain $1$ are equal. By Lemma~\ref{lem:quo:a} for each positive integer $x$ we have $\overline{\psi}(x) = \sigma_{\psi}(x,1)=\sum_{0\leq i \leq x-1}\pi_{\psi}(\psi^i(1))$ and $\overline{\varphi'}^{\, 2}(x) = \sigma_{\varphi'}(x,2)=\sum_{0\leq i \leq x-1}\pi'((\varphi')^i(2))$. Then, since $\psi'$ is equal to both $\psi$ taken modulo $2^{e-2}$ and $\varphi'\vert_{\langle 2\rangle}$, we find that both sums taken modulo $\ord(\psi')$ are equal to $\sum_{0\leq i \leq x-1}\pi_{\psi'}((\psi')^i(1))$. It follows that the skew morphisms $\overline{\psi}$ and $\overline{\varphi'}^{\, 2}$ (taken modulo $\ord(\psi)$) of $\ZZ_{\ord(\psi)}$ are both equal when taken modulo $\ord(\psi')$. Recall that $\ord(\psi')=\ord(\psi)/2$, and we also have $\overline{\psi}(1)\equiv \pi_{\psi}(1) \equiv \pi'(2) \equiv \overline{\varphi'}^{\, 2}(1) \pmod{\ord(\psi)}$. Moreover, by Lemma~\ref{lem:2power} we have $\overline{\varphi'}^{\, 2}(1)\equiv \pi'(2) \equiv 1 \pmod{4}$. We also know that $  \pi_{\psi}(1) \equiv \pi'(2) \pmod{\ord(\varphi)}$ and $\ord(\varphi)$ is at least $4$, and so also $\overline{\psi}(1)\equiv \pi_{\psi}(1) \equiv 1 \pmod{4}$. It follows that both $\overline{\psi}$ and $\overline{\varphi'}^{\, 2}$ taken modulo $4$ are equal to the identity, and so we may apply Lemma~\ref{lem:samesame} to deduce that the orbits of $\overline{\psi}$ and $\overline{\varphi'}^{\, 2}$ (taken modulo $\ord(\psi)$) that contain $1$ are equal. This proves that $\pi_{\psi}(x) \equiv \pi'(2x) \pmod{\ord(\psi)}$.

Our next goal is to show that $\ord(\varphi)=2\ord(\varphi')$. We already showed that $\varphi$ taken modulo $2^{e-1}$ is $\varphi'$, and so $\ord(\varphi)\in \{\ord(\varphi'), 2\ord(\varphi')\}$. Note that Lemma~\ref{lem:2basic:c} implies that all even elements of a skew morphism of a cyclic $2$-group are mapped to even elements, and consequently all odd elements must be mapped to odd elements. In particular, since $x_1 \equiv \varphi'(1) \pmod{2^{e-1}}$, we deduce that $x_1$ is odd, and so $x_1=1+2t$ for some non-negative integer $t$. It follows that for each non-negative integer $k$ we have $\varphi^2(1+2k)=\varphi(\varphi(1+2k))=\varphi(x_1+2\psi^{\pi'(1)}(k)) = \varphi(1+2t+2\psi^{\pi'(1)}(k))=x_1+2\psi^{\pi'(1)}(t+\psi^{\pi'(1)}(k))=1+2t+2\psi^{\pi'(1)}(t)+2\psi^{\sigma_{\psi}(\pi'(1),t)+\pi'(1)}(k)$. We have $\sigma_{\psi}(\pi'(1),t) = \sum_{0\leq i \leq \pi'(1)-1}\pi_{\psi}(\psi^i(t))$, and since $\pi_{\psi}(x) \equiv \pi'(2x) \bmod{\ord(\psi)}$, this can be rewritten as $\sum_{0\leq i \leq \pi'(1)-1}\pi'(2\psi^i(t))$. Note here that $\pi'$ take values between $0$ and $2^{e-1}-1$, and so instead of $2\psi^i(t)$ we can write $2(\psi')^i(t)$, which by definition is equal to $(\varphi')^i(2t)$. It follows that $\sigma_{\psi}(\pi'(1),t) = \sum_{0\leq i \leq \pi'(1)-1}\pi'((\varphi')^i(2t)) = \sigma_{\varphi'}(\pi'(1),2t)=\pi'(1+2t)=\pi'(\varphi(1))=\pi'(\varphi'(1))$. Also note that $\varphi^2(1)=\varphi(1+2t)=1+2t+2\psi^{\pi'(1)}(t)$, and therefore $\varphi^2(1+2k)=\varphi^2(1)+2\psi^{\pi'(\varphi'(1))+\pi'(1)}(k)$. For even elements we have simply $\varphi^2(2k)=2\psi^2(k)$. We will now show that $\ord(\varphi^2)=2\ord((\varphi')^2)$, which will imply that $\ord(\varphi)=2\ord(\varphi')$. By Theorem~\ref{thm:max4} we know that $\psi^2$ is the only skew morphism of $\ZZ_{2^{e-1}}$ that is equal to $(\psi')^2$ when taken modulo $2^{e-2}$ and whose images of $1$ and $2$ are equal to $\psi^2(1)$ and $\psi^2(2)$. Let $\rho$ be a skew morphism of $\ZZ_{2^e}$ such that $\rho(1)=\varphi^2(1)$, $\rho(2)=\varphi^2(2)$, and $\rho$ taken modulo $2^{e-1}$ is equal to $(\varphi')^2$. Recall that $\varphi$ taken modulo $2^{e-1}$ is equal to $\varphi'$, and so $\rho(1)$ and $\rho(2)$ are congruent to $(\varphi')^2(1)$ and $(\varphi')^2(2)$. It follows that if $\ord((\varphi')^2)$ is at least $4$, the existence of $\rho$ follows from the assumption of minimality of $\ord(\varphi')$. Also note that by Lemma~\ref{lem:half1} we have $\ord(\rho)=2\ord((\varphi')^2)$. If $\ord((\varphi')^2)$ is smaller than $4$, then we must have $\ord(\varphi')=4$ and $\ord((\varphi')^2)=2$.  Since the order of $\varphi'$ is $4$, by Lemma~\ref{lem:half1} we deduce that $(\varphi')^2$ is the automorphism of $\ZZ_{2^{e-1}}$ given by $x\mapsto (2^{e-2}+1)x$, and so the existence of $\rho$ is provided by Proposition~\ref{prop:small}. Note here that the only permutations of $\ZZ_{2^e}$ of order at most $2$ are the identity and the automorphisms given by $x\mapsto (2^{e-1}-1)x$, $x\mapsto (2^{e-1}+1)x$, and $x\mapsto (2^e-1)x$. Since none of these permutations taken modulo $2^{e-1}$ is equal to $x\mapsto (2^{e-2}+1)x$, it follows that $\rho$ must have order $4$, and so $\ord(\rho)=2\ord((\varphi')^2)$. As the last step, we show that $\rho=\varphi^2$. We defined $\rho$ so that $\rho(2)=\varphi^2(2)=2\psi^2(1)$. We also have $\rho(4)=\rho(2)+\rho^{\pi_{\rho}(2)}(2)=2\psi^2(1)+2\psi^{2\pi_{\rho}(2)}(1)$, and since the length of the orbit of $\rho$ that contains $2$ is at most $\ord(\rho)/2$, by Lemma~\ref{lem:dashpow} we may replace $2\pi_{\rho}(2)$ by $2\pi_{(\varphi')^2}(2)$. By Lemma~\ref{lem:power} we have $2\pi_{(\varphi')^2}(2)=\sigma_{\varphi'}(2,2)=\pi'(\varphi'(2))+\pi'(2)=\pi'(2\psi(1))+\pi'(2)=2(\pi_{\psi}(\psi(1))+\pi_{\psi}(1))=\pi_{\psi^2}(1)$, and so $\rho(4)=2(\psi^2(1)+2\psi^{\pi_{\psi^2}(1)}(2))=2\psi^2(2)$. It follows that $\rho\vert_{\langle 2\rangle}$ is a skew morphism whose images of $1$ and $2$ are $\psi^2(1)$ and $\psi^2(2)$, and $\rho\vert_{\langle 2\rangle}$ taken modulo $2^{e-1}$ is equal to $(\varphi')^2\vert_{\langle 2\rangle}$, which by definition is equal to $(\psi')^2$. Since $\psi^2$ is the unique skew morphism of $\ZZ_{2^{e-1}}$ with these properties, we deduce that $\rho\vert_{\langle 2\rangle}=\psi^2$. Next, for each non-negative integer $k$ we have  $\rho(2k)=2\psi^2(k)=\varphi^2(2k)$ and $\rho(1+2k)=\rho(1)+\rho^{\pi_{\rho}(1)}(2k)=\varphi^2(1)+2(\psi^2)^{\pi_{(\varphi')^2}(1)}(k)=\varphi^2(1)+2\psi^{2\pi_{(\varphi')^2}(1)}(k)=\varphi^2(1)+2\psi^{\pi'(\varphi'(1))+\pi'(1)}(k)=\varphi^2(1+2k)$. It follows that $\varphi^2=\rho$, and consequently $\ord(\varphi)=2\ord(\varphi')$. Moreover, since $\rho$ is a skew morphism, we know that the length of the orbit of $\langle \rho \rangle$ that contains $1$ is $\ord(\rho)$, and it follows easily that the length of the orbit of $\langle\varphi\rangle$ that contains $1$ is $2\ord(\varphi^2)$, which is equal to $\ord(\varphi)$.    

Next, for each $x\in \ZZ_{2^e}$ we define $\pi(x)$ so that $\varphi(x+1)=\varphi(x)+\varphi^{\pi(x)}(1)$. We will first show that this is a well-defined function from $\ZZ_{2^e}$ to $\ZZ_{\ord(\varphi)}$. Let $\OO_1$ and $\OO'_1$ denote the orbits of $\langle \varphi \rangle$ and $\langle \varphi' \rangle$ that contain $1$. The length of $\OO_1$ is $\ord(\varphi)$, so $\pi$ is indeed a function from $\ZZ_{2^e}$ to $\ZZ_{\ord(\varphi)}$. We also know that $\varphi(x+1)\equiv \varphi'(x+1)\equiv \varphi'(x)+(\varphi')^{\pi'(x)}(1) \equiv \varphi(x)+(\varphi')^{\pi'(x)}(1)\pmod{2^{e-1}}$, and so we have either $\varphi(x+1)= \varphi(x)+(\varphi')^{\pi'(x)}(1)$ or $\varphi(x+1)= \varphi(x)+(\varphi')^{\pi'(x)}(1)+K$, where $K=2^{e-1}$. Since the length of $\OO_1$ is $\ord(\varphi)$ and the length of $\OO'_1$ is $\ord(\varphi)/2$, it follows that $\OO_1$ must contain elements $(\varphi')^i(1)$ and $(\varphi')^i(1)+K$ for each $i\in \{1,\dots,\ord(\varphi')\}$. In particular, $\OO_1$ contains both $(\varphi')^{\pi'(x)}(1)$ and $(\varphi')^{\pi'(x)}(1)+K$, and so there exists a unique $\pi(x)$ (in $\ZZ_{\ord(\varphi)}$) such that $\varphi(x+1)=\varphi(x)+\varphi^{\pi(x)}(1)$. We will now show that for any pair of elements $x,y\in \ZZ_{2^e}$ we have $\varphi(x+y)=\varphi(x)+\varphi^{\pi(x)}(y)$.

We clearly have $\pi(x)\equiv \pi'(x) \pmod{\ord(\varphi')}$, and consequently $\pi_{\psi}(x)\equiv \pi(2x) \pmod{\ord(\psi)}$ for all $x\in \ZZ_{2^e}$. It follows that for any pair of non-negative integers $k$ and $\ell$ we have $\varphi(2k)+\varphi^{\pi(2k)}(2\ell)=2(\psi(k)+\psi^{\pi(2k)}(\ell))=2(\psi(k)+\psi^{\pi_{\psi}(k)}(\ell))=2(\psi(k+\ell))=\varphi(2k+2\ell)$. If one of the elements is odd, the situation is more complex. Recall that we have shown that $\varphi^2(1+2k)=\varphi^2(1)+2\psi^{\pi'(\varphi'(1))+\pi'(1)}(k)=\varphi^2(1)+2\psi^{\sigma_{\varphi'}(2,1)}(k)$. We will now use induction to show that for each positive integer $j$ we have $\varphi^j(1+2k)=\varphi^j(1)+2\psi^{\sigma_{\varphi'}(j,1)}(k)$. Using the inductive hypotheses we deduce that 
\begin{align*}
\varphi^j(1+2k) &= \varphi(\varphi^{j-1}(1+2k)) \\
  &=\varphi(\varphi^{j-1}(1)+2\psi^{\sigma_{\varphi'}(j-1,1)}(k)) \\
  &= \varphi(1 + \varphi^{j-1}(1) - 1 + 2\psi^{\sigma_{\varphi'}(j-1,1)}(k) ) \\ 
  &= \varphi\left(1 + 2\left((\varphi^{j-1}(1) - 1)/2 + \psi^{\sigma_{\varphi'}(j-1,1)}(k)\right)\right) \\
  &= \varphi(1) + 2\psi^{\pi'(1)}\left((\varphi^{j-1}(1) - 1)/2 + \psi^{\sigma_{\varphi'}(j-1,1)}(k)\right) \\ 
  &= \varphi(1) + 2\psi^{\pi'(1)}((\varphi^{j-1}(1) - 1)/2) + 2\psi^{\sigma_{\psi}(\pi'(1),(\varphi^{j-1}(1) - 1)/2) +  \sigma_{\varphi'}(j-1,1)}(k)\, .
\end{align*}
By definition we have $\varphi(1) + 2\psi^{\pi'(1)}((\varphi^{j-1}(1) - 1)/2)=\varphi(1+2(\varphi^{j-1}(1) - 1)/2)=\varphi(1+\varphi^{j-1}(1) - 1)=\varphi(\varphi^{j-1}(1))=\varphi^j(1)$. Earlier in the proof we showed that $\sigma_{\psi}(\pi'(1),t)=\sigma_{\varphi'}(\pi'(1),t)$ and the same arguments can be applied here to argue that $\sigma_{\psi}(\pi'(1),(\varphi^{j-1}(1) - 1)/2)=\sigma_{\varphi'}(\pi'(1),\varphi^{j-1}(1) - 1))$. Moreover, since for each $x$ we clearly have $\pi'(\varphi(x))=\pi'(\varphi'(x))$, this can be further rewritten as $\sigma_{\varphi'}(\pi'(1),(\varphi')^{j-1}(1) - 1)$, and then by Lemma~\ref{lem:sigma:b} we find that  $\sigma_{\varphi'}(\pi'(1),(\varphi')^{j-1}(1) - 1)=\pi'(1+(\varphi')^{j-1}(1)-1)=\pi'((\varphi')^{j-1}(1))$. Then by Lemma~\ref{lem:sigma:c} we have $\pi'((\varphi')^{j-1}(1))+\sigma_{\varphi'}(j-1,1)=\sigma_{\varphi'}(j,1)$, and it follows that $\varphi^j(1+2k)=\varphi^j(1)+2\psi^{\sigma_{\varphi'}(j,1)}(k)$ as required. 

Next, let $x$ be an arbitrary element of $\ZZ_{2^e}$ and note that for each non-negative integer $k$ we have $\varphi^{\pi(x)}(1+2k)=\varphi^{\pi(x)}(1)+2\psi^{\sigma_{\varphi'}(\pi(x),1)}(k)$. Since $\pi(x)\equiv \pi'(x) \pmod{\ord(\varphi')}$, by Lemma~\ref{lem:sigma:b} we have $\sigma_{\varphi'}(\pi(x),1)=\sigma_{\varphi'}(\pi'(x),1)=\pi'(x+1)$. Further, since $\ord(\psi)$ divides $\ord(\varphi')$, we deduce that $\pi'(x+1)\equiv\pi(x+1) \pmod{\ord(\psi)}$, and it follows that $\varphi^{\pi(x)}(1+2k)=\varphi^{\pi(x)}(1)+2\psi^{\pi'(x+1)}(k)=\varphi^{\pi(x)}(1)+2\psi^{\pi(x+1)}(k)=\varphi^{\pi(x)}(1)+\varphi^{\pi(x+1)}(2k)$. Since $\sigma_{\varphi'}(1,1)=\pi'(1)$, it can be also easily seen that $\varphi(1+2k)=\varphi(1)+\varphi^{\pi(1)}(2k)$. In the rest of the proof we will repeatedly use this observation, the observation from the previous paragraph, and the fact that for each $x\in \ZZ_{2^e}$ we have $\varphi(x+1)=\varphi(x)+\varphi^{\pi(x)}(1)$.

Let $k$ and $\ell$ be a pair of non-negative integers. We already showed that $\varphi(2k+2\ell)=\varphi(2k)+\varphi^{\pi(2k)}(2\ell)$. Note that we defined $\varphi$ so that $\varphi\vert_{\langle 2\rangle}=\psi$, and so $\pi'$ and $\pi$ are interchangeable in the exponent of $\varphi$ if it acts on $\langle 2 \rangle$. In particular, we have $\varphi^{\pi(x)}(2k+2\ell) = \varphi^{\pi(x)}(2k)+\varphi^{\sigma_{\varphi'}(\pi'(x),2k)}(2\ell)=\varphi^{\pi(x)}(2k)+\varphi^{\pi'(x+2k)}(2\ell)=\varphi^{\pi(x)}(2k)+\varphi^{\pi(x+2k)}(2\ell)$. We now address the three remaining cases. First, note that
$\varphi(1+2k+2\ell) = \varphi(1)+\varphi^{\pi(1)}(2k+2\ell) = \varphi(1) + \varphi^{\pi(1)}(2k)+\varphi^{\pi(1+2k)}(2\ell) = \varphi(1+2k)+\varphi^{\pi(1+2k)}(2\ell)$. Next, we have $\varphi(2k+1+2\ell) = \varphi(1+2k+2\ell) = \varphi(1)+\varphi^{\pi(1)}(2k+2\ell) = \varphi(1) + \varphi^{\pi(1)}(2k)+\varphi^{\pi(1+2k)}(2\ell) = \varphi(1+2k)+\varphi^{\pi(1+2k)}(2\ell) = \varphi(2k+1)+\varphi^{\pi(1+2k)}(2\ell) = \varphi(2k)+\varphi^{\pi(2k)}(1)+\varphi^{\pi(1+2k)}(2\ell) = \varphi(2k)+\varphi^{\pi(2k)}(1+2\ell)$. Finally, we have $\varphi(1+2k+1+2\ell) = \varphi(2+2k+2\ell) = \varphi(2+2k)+\varphi^{\pi(2+2k)}(2\ell) = \varphi(1+2k+1)+\varphi^{\pi(2+2k)}(2\ell) = \varphi(1+2k)+\varphi^{\pi(1+2k)}(1)+\varphi^{\pi(2+2k)}(2\ell) = \varphi(1+2k)+\varphi^{\pi(1+2k)}(1+2\ell)$.

As the last step we show that $\varphi$ is a bijection and $\varphi(0)=0$. Since $\varphi\vert_{\langle 2\rangle}=\psi$ and $\psi$ is a skew morphism, it follows that $\varphi(0)=0$ and $\varphi$ restricted to $\langle 2 \rangle$ is a bijection. It also clearly maps odd elements to odd elements and if $\varphi(1+2k)=\varphi(1+2\ell)$, then $x_1+2\psi^{\pi'(1)}(k)=x_1+2\psi^{\pi'(1)}(\ell)$. Since $\psi$ is a bijection, this implies $k=\ell$, and hence $1+2k=1+2\ell$. This proves that $\varphi$ is a bijection, and consequently a skew morphism of $\ZZ_{2^e}$.
\end{proof}

We are now ready to prove Theorem~\ref{thm:main}.
\begin{proof}
It is well-known that the number of skew morphisms of $\ZZ_8$ is $6$; see~\cite{ConderList} for example. Next, let $e\geq 4$ and note that by Lemma~\ref{lem:2basic:b} we know that every skew morphism $\varphi$ of $\ZZ_{2^e}$ taken modulo $2^{e-1}$ is equal to some skew morphism $\varphi'$ of $\ZZ_{2^{e-1}}$. Note that for $e\geq 4$ the identity mapping of $\ZZ_{2^{e-1}}$ and the automorphism of $\ZZ_{2^{e-1}}$ given by $x\mapsto (2^{e-2}-1)x$ are distinct, and by Proposition~\ref{prop:small} we know that there are exactly four distinct skew morphisms of $\ZZ_{2^e}$ that are equal to either of these automorphisms when taken modulo $2^{e-1}$. Proposition~\ref{prop:small} also implies that for every other $\varphi'$ of order at most two we have exactly four choices for $\varphi$. If $\ord(\varphi')\geq 4$, then we have four distinct choices for elements $x_1,x_2\in \ZZ_{2^e}$ such that $x_1\equiv \varphi'(1)\pmod{2^{e-1}}$ and $x_2\equiv \varphi'(2)\pmod{2^{e-1}}$. By Theorem~\ref{thm:exi} it follows that there are at least four skew morphisms of $\ZZ_{2^e}$ that are equal to $\varphi'$ when taken modulo $2^{e-1}$. On the other hand, by Theorem~\ref{thm:max4} there are at most four skew morphisms with this property, and so we deduce that there are exactly four choices for $\varphi$. It follows that for each $e\geq 4$ we have $\Skew(2^e)=4\,\Skew(2^{e-1})-4$. The second part of the assertion follows easily by induction as $4\,\Skew(2^{e-1})-4= 4(7\cdot 4^{e-3} + 8)/6-4=(7\cdot 4^{e-2} + 32)/6-4=(7\cdot 4^{e-2} + 32-24)/6=(7\cdot 4^{e-2} + 8)/6$.
\end{proof}

\section{Remarks}\label{sec:rem}
The largest available computational census of skew morphisms of finite cyclic groups, covering all cyclic groups of order up to $2000$, is available at \cite{Census}. It includes the values of $\mathrm{Skew}(2^e)$ for all $e \leq 10$, which are consistent with Theorem~\ref{thm:main}.

The proof of Theorem~\ref{thm:exi} yields a recursive description of all skew morphisms of $\mathbb{Z}_{2^e}$ in terms of those of $\mathbb{Z}_{2^{e-1}}$. Compared to the general recursive characterisation for finite cyclic groups given in \cite{BachratyHagara2}, this description is considerably simpler, and crucially, it leads to an enumeration in Theorem~\ref{thm:main}. One natural strategy towards enumerating skew morphisms of all finite cyclic groups is to first handle cyclic $p$-groups, and then determine how $\mathrm{Skew}(nm)$ relates to $\mathrm{Skew}(n)$ and $\mathrm{Skew}(m)$
for coprime $n$ and $m$. This was pursued by Kovács and Nedela across two papers. In \cite{KovacsNedela2011}, they showed that if $\gcd(n,m) = \gcd(n,\phi(m)) = \gcd(\phi(n),m) = 1$, where $\phi$ denotes Euler's totient function --- we call $n$ and $m$ satisfying this condition \emph{$\phi$-coprime} --- then every skew morphism of $\mathbb{Z}_{nm}$ decomposes as a direct product of a skew morphism of $\mathbb{Z}_n$ and a skew morphism of $\mathbb{Z}_m$, giving $\mathrm{Skew}(nm) = \mathrm{Skew}(n)\,\mathrm{Skew}(m)$. Then in \cite{KovacsNedela2017}, they obtained a complete enumeration of skew morphisms of $\mathbb{Z}_{p^e}$ for $p$ odd. Together with the present paper, this settles the enumeration problem for all cyclic $p$-groups. The remaining open question is to determine $\mathrm{Skew}(nm)$ when $n$ and $m$ are coprime but not $\phi$-coprime. The challenge here is twofold: not every direct product of a skew morphism of $\mathbb{Z}_n$ and a skew morphism of $\mathbb{Z}_m$ need be a skew morphism of $\mathbb{Z}_{nm}$, and conversely, there may exist skew morphisms of $\mathbb{Z}_{nm}$ that do not decompose as such a product. A resolution of this problem would complete the enumeration of skew morphisms for all finite cyclic groups.

If $n$ and $m$ are coprime but not $\phi$-coprime, then $\mathrm{Skew}(nm)$ need not equal $\mathrm{Skew}(n)\,\mathrm{Skew}(m)$. A computer search using \cite{Census} shows that for all pairs of coprime positive integers $n$ and $m$ with $nm \leq 2000$, the equality $\mathrm{Skew}(nm) = \mathrm{Skew}(n)\,\mathrm{Skew}(m)$ holds if and only if $n$ and $m$ are $\phi$-coprime. Moreover, when $n$ and $m$ are not $\phi$-coprime, $\mathrm{Skew}(nm)$ almost always strictly exceeds $\mathrm{Skew}(n)\,\mathrm{Skew}(m)$. The only exceptions among pairs with $nm \leq 2000$ are $11\times 25$, $11\times 75$, $25\times 31$, and $25\times 33$, in each of which $\mathrm{Skew}(nm) < \mathrm{Skew}(n)\,\mathrm{Skew}(m)$, with for instance $\mathrm{Skew}(275) = 496 < 680 = \mathrm{Skew}(11)\,\mathrm{Skew}(25)$; note that $825$ admits two such factorisations, $11\times 75$ and $25\times 33$. Beyond the data available for $nm \leq 2000$, only partial answers are known in general. A natural starting point is the case when $n = p^a$ and $m = q^b$ for distinct primes $p < q$. Observe that in this setting, $n$ and $m$ are $\phi$-coprime if and only if $p$ does not divide $q - 1$. Some results are known when $n$ and $m$ are not $\phi$-coprime. It was shown in \cite{KovacsNedela2011} that if $n = p$ and $m = q$ for primes $p$ and $q$ such that $p$ divides $q-1$, then $\mathrm{Skew}(pq) = 2(p-1)(q-1)$, which equals $2\,\mathrm{Skew}(p)\,\mathrm{Skew}(q)$. In \cite{Bachraty}, the case $n = 4$ and $m = q$ for an odd prime $q$ was settled: if $q \equiv 1 \pmod{4}$, then $\mathrm{Skew}(4q) = 6q-6$, which equals $3\,\mathrm{Skew}(4)\,\mathrm{Skew}(q)$; if instead $q \equiv 3 \pmod{4}$, then $\mathrm{Skew}(4q) = 4q-4$, which equals $2\,\mathrm{Skew}(4)\,\mathrm{Skew}(q)$. Beyond these cases, the problem appears to be open. Table~\ref{tab:skew_combined} illustrates the data for small cases, listing values of $\mathrm{Skew}(nm)$ for prime powers $n > m$ with distinct prime bases, where $n \leq 49$ and $m \leq 27$. Each cell contains $\mathrm{Skew}(nm)$ with $\mathrm{Skew}(n)\,\mathrm{Skew}(m)$ in parentheses, except when the two values coincide, in which case only one is listed. Such cells are highlighted in green.

\begin{table}
\centering
\begin{adjustbox}{max width=\textwidth, max totalheight=\textheight}
\setlength{\tabcolsep}{2pt}
\begin{tabular}{c|ccccccccccccccc}
 & $2$ & $3$ & $4$ & $5$ & $7$ & $8$ & $9$ & $11$ & $13$ & $16$ & $17$ & $19$ & $23$ & $25$ & $27$ \\
\hline
$3$ & $4\ (2)$ & --- & --- & --- & --- & --- & --- & --- & --- & --- & --- & --- & --- & --- & --- \\
$4$ & --- & $8\ (4)$ & --- & --- & --- & --- & --- & --- & --- & --- & --- & --- & --- & --- & --- \\
$5$ & $8\ (4)$ & \cellcolor{green!20}$8$ & $24\ (8)$ & --- & --- & --- & --- & --- & --- & --- & --- & --- & --- & --- & --- \\
$7$ & $12\ (6)$ & $24\ (12)$ & $24\ (12)$ & \cellcolor{green!20}$24$ & --- & --- & --- & --- & --- & --- & --- & --- & --- & --- & --- \\
$8$ & --- & $24\ (12)$ & --- & $60\ (24)$ & $72\ (36)$ & --- & --- & --- & --- & --- & --- & --- & --- & --- & --- \\
$9$ & $30\ (10)$ & --- & $60\ (20)$ & \cellcolor{green!20}$40$ & $80\ (60)$ & $180\ (60)$ & --- & --- & --- & --- & --- & --- & --- & --- & --- \\
$11$ & $20\ (10)$ & \cellcolor{green!20}$20$ & $40\ (20)$ & $80\ (40)$ & \cellcolor{green!20}$60$ & $120\ (60)$ & \cellcolor{green!20}$100$ & --- & --- & --- & --- & --- & --- & --- & --- \\
$13$ & $24\ (12)$ & $48\ (24)$ & $72\ (24)$ & \cellcolor{green!20}$48$ & \cellcolor{green!20}$72$ & $180\ (72)$ & $160\ (120)$ & \cellcolor{green!20}$120$ & --- & --- & --- & --- & --- & --- & --- \\
$16$ & --- & $80\ (40)$ & --- & $184\ (80)$ & $240\ (120)$ & --- & $600\ (200)$ & $400\ (200)$ & $552\ (240)$ & --- & --- & --- & --- & --- & --- \\
$17$ & $32\ (16)$ & \cellcolor{green!20}$32$ & $96\ (32)$ & \cellcolor{green!20}$64$ & \cellcolor{green!20}$96$ & $292\ (96)$ & \cellcolor{green!20}$160$ & \cellcolor{green!20}$160$ & \cellcolor{green!20}$192$ & $920\ (320)$ & --- & --- & --- & --- & --- \\
$19$ & $36\ (18)$ & $72\ (36)$ & $72\ (36)$ & \cellcolor{green!20}$72$ & \cellcolor{green!20}$108$ & $216\ (108)$ & $332\ (180)$ & \cellcolor{green!20}$180$ & \cellcolor{green!20}$216$ & $720\ (360)$ & \cellcolor{green!20}$288$ & --- & --- & --- & --- \\
$23$ & $44\ (22)$ & \cellcolor{green!20}$44$ & $88\ (44)$ & \cellcolor{green!20}$88$ & \cellcolor{green!20}$132$ & $264\ (132)$ & \cellcolor{green!20}$220$ & $440\ (220)$ & \cellcolor{green!20}$264$ & $880\ (440)$ & \cellcolor{green!20}$352$ & \cellcolor{green!20}$396$ & --- & --- & --- \\
$25$ & $172\ (68)$ & \cellcolor{green!20}$136$ & $552\ (136)$ & --- & \cellcolor{green!20}$408$ & $1364\ (408)$ & \cellcolor{green!20}$680$ & $496\ (680)$ & \cellcolor{green!20}$816$ & $4184\ (1360)$ & \cellcolor{green!20}$1088$ & \cellcolor{green!20}$1224$ & \cellcolor{green!20}$1496$ & --- & --- \\
$27$ & $264\ (82)$ & --- & $528\ (164)$ & \cellcolor{green!20}$328$ & $584\ (492)$ & $1584\ (492)$ & --- & \cellcolor{green!20}$820$ & $1168\ (984)$ & $5280\ (1640)$ & \cellcolor{green!20}$1312$ & $1628\ (1476)$ & \cellcolor{green!20}$1804$ & \cellcolor{green!20}$5576$ & --- \\
$29$ & $56\ (28)$ & \cellcolor{green!20}$56$ & $168\ (56)$ & \cellcolor{green!20}$112$ & $336\ (168)$ & $420\ (168)$ & \cellcolor{green!20}$280$ & \cellcolor{green!20}$280$ & \cellcolor{green!20}$336$ & $1288\ (560)$ & \cellcolor{green!20}$448$ & \cellcolor{green!20}$504$ & \cellcolor{green!20}$616$ & \cellcolor{green!20}$1904$ & \cellcolor{green!20}$2296$ \\
$31$ & $60\ (30)$ & $120\ (60)$ & $120\ (60)$ & $240\ (120)$ & \cellcolor{green!20}$180$ & $360\ (180)$ & $400\ (300)$ & \cellcolor{green!20}$300$ & \cellcolor{green!20}$360$ & $1200\ (600)$ & \cellcolor{green!20}$480$ & \cellcolor{green!20}$540$ & \cellcolor{green!20}$660$ & $1488\ (2040)$ & $2920\ (2460)$ \\
$32$ & --- & $304\ (152)$ & --- & $680\ (304)$ & $912\ (456)$ & --- & $2280\ (760)$ & $1520\ (760)$ & $2040\ (912)$ & --- & $2952\ (1216)$ & $2736\ (1368)$ & $3344\ (1672)$ & $15464\ (5168)$ & $20064\ (6232)$ \\
$37$ & $72\ (36)$ & $144\ (72)$ & $216\ (72)$ & \cellcolor{green!20}$144$ & \cellcolor{green!20}$216$ & $540\ (216)$ & $664\ (360)$ & \cellcolor{green!20}$360$ & \cellcolor{green!20}$432$ & $1656\ (720)$ & \cellcolor{green!20}$576$ & \cellcolor{green!20}$648$ & \cellcolor{green!20}$792$ & \cellcolor{green!20}$2448$ & $3256\ (2952)$ \\
$41$ & $80\ (40)$ & \cellcolor{green!20}$80$ & $240\ (80)$ & $320\ (160)$ & \cellcolor{green!20}$240$ & $740\ (240)$ & \cellcolor{green!20}$400$ & \cellcolor{green!20}$400$ & \cellcolor{green!20}$480$ & $2040\ (800)$ & \cellcolor{green!20}$640$ & \cellcolor{green!20}$720$ & \cellcolor{green!20}$880$ & $1984\ (2720)$ & \cellcolor{green!20}$3280$ \\
$43$ & $84\ (42)$ & $168\ (84)$ & $168\ (84)$ & \cellcolor{green!20}$168$ & $504\ (252)$ & $504\ (252)$ & $560\ (420)$ & \cellcolor{green!20}$420$ & \cellcolor{green!20}$504$ & $1680\ (840)$ & \cellcolor{green!20}$672$ & \cellcolor{green!20}$756$ & \cellcolor{green!20}$924$ & \cellcolor{green!20}$2856$ & $4088\ (3444)$ \\
$47$ & $92\ (46)$ & \cellcolor{green!20}$92$ & $184\ (92)$ & \cellcolor{green!20}$184$ & \cellcolor{green!20}$276$ & $552\ (276)$ & \cellcolor{green!20}$460$ & \cellcolor{green!20}$460$ & \cellcolor{green!20}$552$ & $1840\ (920)$ & \cellcolor{green!20}$736$ & \cellcolor{green!20}$828$ & $2024\ (1012)$ & \cellcolor{green!20}$3128$ & \cellcolor{green!20}$3772$ \\
$49$ & $522\ (222)$ & $1044\ (444)$ & $1044\ (444)$ & \cellcolor{green!20}$888$ & --- & $3132\ (1332)$ & $3332\ (2220)$ & \cellcolor{green!20}$2220$ & \cellcolor{green!20}$2664$ & $10440\ (4440)$ & \cellcolor{green!20}$3552$ & \cellcolor{green!20}$3996$ & \cellcolor{green!20}$4884$ & \cellcolor{green!20}$15096$ & $23924\ (18204)$ \\
\end{tabular}
\end{adjustbox}
\caption{Values of $\mathrm{Skew}(nm)$ for prime powers $n > m$ with distinct prime bases.}
\label{tab:skew_combined}
\end{table}

\bibliographystyle{amsplain-my}
\bibliography{skew2}

\providecommand{\bysame}{\leavevmode\hbox to3em{\hrulefill}\thinspace}
\providecommand{\MR}{\relax\ifhmode\unskip\space\fi MR }
\providecommand{\MRhref}[2]{%
  \href{http://www.ams.org/mathscinet-getitem?mr=#1}{#2}
}
\providecommand{\href}[2]{#2}
\begin{thebibliography}{10}

\bibitem{Bachraty}
M.~Bachratý, \emph{Quotients of skew morphisms of cyclic groups}, Ars Math.
  Contemp. \textbf{24} (2024), \#P2.08,
  \url{https://doi.org/10.26493/1855-3974.2947.cd6}.

\bibitem{BachratyConderVerret}
M.~Bachratý, M.~Conder, and G.~Verret, \emph{Skew product groups for
  monolithic groups}, Algebr. Combin. \textbf{5} (2022), 785--802,
  \url{https://doi.org/10.5802/alco.206}.

\bibitem{Census}
M.~Bachratý and M.~Hagara, \emph{A census of skew-morphisms for cyclic groups
  of small orders}, \url{https://www.math.sk/bachraty/lists/skew.html}.

\bibitem{BachratyHagara}
M.~Bachratý and M.~Hagara, \emph{Observations about skew morphisms of cyclic
  groups}, J. Algebr. Combin. \textbf{61} (2025), \#5,
  \url{https://doi.org/10.1007/s10801-024-01371-6}.

\bibitem{BachratyHagara2}
M.~Bachratý and M.~Hagara, \emph{Recursive characterisation of skew morphisms
  of finite cyclic groups}, 2025, \url{https://arxiv.org/abs/2506.11626}.

\bibitem{BachratyJajcay2016}
M.~Bachratý and R.~Jajcay, \emph{Powers of skew-morphisms}, Symmetries in
  Graphs, Maps, and Polytopes (2016), 1--25,
  \url{https://doi.org/10.1007/978-3-319-30451-9_1}.

\bibitem{Magma}
W.~Bosma, J.~Cannon, and C.~Playoust, \emph{The {M}agma algebra system. {I}.
  {T}he user language}, J. Symbolic Comput. \textbf{24} (1997), 235--265,
  \url{https://doi.org/10.1006/jsco.1996.0125}.

\bibitem{ConderList}
M.~Conder, \emph{List of skew-morphisms for small cyclic groups},
  \url{https://www.math.auckland.ac.nz/~conder/SkewMorphisms-SmallCyclicGroups-60.txt}.

\bibitem{ConderJajcayTucker2007b}
M.~Conder, R.~Jajcay, and T.~W. Tucker, \emph{Regular {C}ayley maps for finite
  abelian groups}, J. Algebr. Combin. \textbf{25} (2007), 343--364,
  \url{https://doi.org/10.1007/s10801-006-0037-0}.

\bibitem{ConderJajcayTucker2007}
M.~Conder, R.~Jajcay, and T.~W. Tucker, \emph{Regular t-balanced {C}ayley
  maps}, J. Combin. Theory Ser. B \textbf{97} (2007), 453--473,
  \url{https://doi.org/10.1016/j.jctb.2006.07.008}.

\bibitem{ConderJajcayTucker2016}
M.~Conder, R.~Jajcay, and T.~W. Tucker, \emph{Cyclic complements and skew
  morphisms of groups}, J. Algebra \textbf{453} (2016), 68--100,
  \url{https://doi.org/10.1016/j.jalgebra.2015.12.024}.

\bibitem{ConderTucker}
M.~Conder and T.~W. Tucker, \emph{Regular {C}ayley maps for cyclic groups},
  Trans. Amer. Math. Soc. \textbf{336} (2014), 3585--3609,
  \url{https://doi.org/10.1090/S0002-9947-2014-05933-3}.

\bibitem{DuHuLu}
S.~Du, K.~Hu., and A.~Lucchini, \emph{Skew-morphisms of cyclic 2-groups}, J.
  Group Theory \textbf{22} (2019), 617--635,
  \url{https://doi.org/10.1515/jgth-2019-2046}.

\bibitem{FengJajcayWang}
R.~Feng, R.~Jajcay, and Y.~Wang, \emph{Regular t-balanced {C}ayley maps for
  abelian groups}, Discrete Math. \textbf{311} (2011), 2309--2316,
  \url{https://doi.org/10.1016/j.disc.2011.04.012}.

\bibitem{FengHu}
Y.-Q. Feng, K.~Hu, R.~Nedela, M.~Škoviera, and N.-E. Wang, \emph{Complete
  regular dessins and skew-morphisms of cyclic groups}, Ars Math. Contemp.
  \textbf{18} (2020), 289--307,
  \url{https://doi.org/10.26493/1855-3974.1748.ebd}.

\bibitem{HuJajcay}
K.~Hu and R.~Jajcay, \emph{Cyclic complementary extensions and skew-morphism},
  J. Group Theory \textbf{29} (2026), 301--324,
  \url{https://doi.org/10.1515/jgth-2024-0144}.

\bibitem{HuKovacsKwon}
K.~Hu, I.~Kovács, and Y.~S. Kwon, \emph{A classification of skew morphisms of
  dihedral groups}, J. Group Theory \textbf{26} (2023), 547--569,
  \url{https://www.degruyter.com/document/doi/10.1515/jgth-2022-0085/}.

\bibitem{HuNedela}
K.~Hu, R.~Nedela, N.-E. Wang, and K.~Yuan, \emph{Reciprocal skew morphisms of
  cyclic groups}, Acta Math. Univ. Comenian. \textbf{88} (2019), 305--318,
  \url{http://www.iam.fmph.uniba.sk/amuc/ojs/index.php/amuc/article/view/1006}.

\bibitem{IrelandRosen}
K.~Ireland and M.~Rosen, \emph{A classical introduction to modern number
  theory}, 2 ed., Graduate Texts in Mathematics, vol.~84, Springer, New York,
  1990.

\bibitem{JajcaySiran}
R.~Jajcay and J.~Širáň, \emph{Skew-morphisms of regular {C}ayley maps},
  Discrete Math. \textbf{244} (2002), 167--179,
  \url{https://doi.org/10.1016/S0012-365X(01)00081-4}.

\bibitem{KovacsKwon}
I.~Kovács and Y.~S. Kwon, \emph{Regular {C}ayley maps for dihedral groups}, J.
  Combin. Theory Ser. B \textbf{148} (2021), 84--124,
  \url{https://doi.org/10.1016/j.jctb.2020.12.002}.

\bibitem{KovacsNedela2011}
I.~Kovács and R.~Nedela, \emph{Decomposition of skew-morphisms of cyclic
  groups}, Ars Math. Contemp. \textbf{4} (2011), 329--349,
  \url{https://doi.org/10.26493/1855-3974.157.fc1}.

\bibitem{KovacsNedela2017}
I.~Kovács and R.~Nedela, \emph{Skew-morphisms of cyclic $p$-groups}, J. Group
  Theory \textbf{20} (2017), 1135--1154,
  \url{https://doi.org/10.1515/jgth-2017-0015}.

\end{thebibliography}

\end{document}